\numberwithin{equation}{section}
\theoremstyle{plain}
\newtheorem{theorem}{Theorem}[section]
\newtheorem{lem}[theorem]{Lemma}
\newtheorem{cor}[theorem]{Corollary}
\newtheorem{prop}[theorem]{Proposition}
\theoremstyle{definition}
\newcommand{\R}{\mathbb{R}}
\newcommand{\N}{\mathbb{N}}
\renewcommand{\leq}{\leqslant}
\renewcommand{\le}{\leqslant}
\renewcommand{\geq}{\geqslant}
\renewcommand{\ge}{\geqslant}
\renewcommand{\epsilon}{\varepsilon}
\begin{document}

\title[Perturbative elliptic problems in the fractional setting]{Global perturbative elliptic problems \\ with critical growth \\ in the fractional setting}

\author[S. Dipierro, E. Proietti Lippi, E. Valdinoci]{Serena Dipierro,
Edoardo Proietti Lippi, Enrico Valdinoci}
\address{Department of Mathematics and Statistics,
The University of Western Australia, 35 Stirling Highway, Crawley, Perth, WA 6009, Australia.}
\email{serena.dipierro@uwa.edu.au}
\email{edoardo.proiettilippi@uwa.edu.au}
\email{enrico.valdinoci@uwa.edu.au}

\thanks{This work has been supported by the 
Australian Research Council
Future Fellowship FT230100333
{\em New perspectives on nonlocal equations}
and by the
Australian Laureate Fellowship FL190100081 {\em Minimal surfaces, free boundaries and partial differential equations}.}

\keywords{Nonlinear analysis, nonlocal equations, critical problems.}

\subjclass[2020]{35J60, 35R11.}

\begin{abstract}
Given~$s$, $q\in(0,1)$, and a bounded and integrable function~$h$ which is strictly positive in an open set, we show that there exist at least two nonnegative solutions~$u$ of the critical problem
$$(-\Delta)^s u=\varepsilon h(x)u^q+u^{2^*_s-1},$$
as long as~$\varepsilon>0$ is sufficiently small.

Also, if~$h$ is nonnegative, these solutions are strictly positive.

The case~$s=1$ was established in~\cite{MR1801341},
which highlighted, in the classical case, the importance of combining perturbative techniques with variational methods: indeed, one of the two solutions branches off perturbatively in~$\varepsilon$ from~$u=0$, while the second solution is found by means of the Mountain Pass
Theorem.

The case~$s\in\left(0,\frac12\right]$ was already established, with different methods, in~\cite{MR3617721} (actually, in~\cite{MR3617721} it was erroneously believed that
the method would have carried through all the fractional cases~$s\in(0,1)$,
so, in a sense, the results presented here correct and complete the ones in~\cite{MR3617721}).
\end{abstract}

\maketitle

\bigskip\bigskip

\begin{center}
\begin{minipage}{30em}
{\wesa{{\footnotesize{$\langle\langle$}}\large{Scientists are very quick to say that science is self-correcting, but those who do the work behind this correction often
get accused of damaging their field, or worse.

Yes, error detectors can make research less comfortable
-- but that discomfort is healthy.}{\footnotesize{$\rangle\rangle$}}

(Simine Vazire, {\em Nature}, 2020)}}
\end{minipage}\end{center}
\bigskip\bigskip

\section{Introduction}

A classical topic in the field of nonlinear analysis is the study of
problems with critical growth. The literature on this topic is vast, but, in a sense, the common denominator of this line of research is that the critical power is typically the one for which, on the one hand, the standard methods of functional analysis become invalid, but, on the other hand, being in a borderline situation, several new phenomena may take place.

In this spirit, another source of criticality may arise when the problem is set in the whole of~$\R^N$, since mass could escape to infinity and standard compactness arguments may not be used in this situation.

Overcoming these difficulties required the introduction of powerful tools, which have now become foundational and ubiquitously used: in particular, the search of new solutions often relied on topological methods, such as the Mountain Pass Theorem, see e.g.~\cite{MR370183, MR1030853}, and the loss of compactness due to an unbounded domain was on many occasion compensated by the concentration-compactness principle,
see e.g.~\cite{MR653747}.

To construct new solutions, perturbation methods came in handy. In particular,
in~\cite{MR1801341}, global solutions of the problem
$$-\Delta u=\varepsilon h(x)u^q+u^{2^*-1},$$
where, as usual, $2^*:=\frac{2N}{N-2}$,
were obtained by an apt combination of the above methods:
specifically, $\varepsilon$ was considered as a small, positive parameter,
modulating a suitable sublinear perturbation, giving rise to two new solutions,
one branching off from the trivial one, and one created by the mountain pass structure of the perturbed functional.

The analogue of this problem in the fractional setting was first studied in~\cite{MR3617721}
and also contributed to the investigation of the concentration-compactness principle in the fractional setting. In particular, in~\cite{MR3617721} the existence theory was carried over for equations driven by the fractional Laplacian
$$ (-\Delta)^su(x):=\int_{\R^N}\frac{u(x)-u(y)}{|x-y|^{N+2s}}\,dx.$$
Here above and in the rest of this paper, normalizing constants are omitted for the sake of simplicity and the integral is taken in the sense of its principal value, to compensate for singularities.

The existence theory in~\cite{MR3617721} is valid for all~$s\in\left(0,\frac12\right]$.
However, it was incorrectly claimed in~\cite{MR3617721} that the methods utilized there would establish the same result for all the fractional parameter range~$s\in(0,1)$.

Motivated by an intriguing question by {\sc Norihisa Ikoma}, to whom we are deeply indebted, we show here
that, on the one hand, the methods of~\cite{MR3617721} are not sufficient by themselves to cover the full fractional parameter range~$s\in(0,1)$, but, on the other hand, the result
in~\cite{MR3617721} does hold true for all~$s\in(0,1)$, though with a different proof that we present here (actually, this proof is valid for all~$s\in(0,1)$, without needing to distinguish any threshold induced by the somewhat special exponent~$s=\frac12$).

The mathematical details go as follows.
We look for positive solutions\footnote{The standard definition of~$\mathcal{D}^{s,2}(\R^N)$ will be recalled in Section~\ref{NOTA}.}~$u\in \mathcal{D}^{s,2}(\R^N)$ of the fractional elliptic equation 
\begin{equation}\label{problema}
(-\Delta)^s u=\varepsilon h(x)u^q+u^{2^*_s-1},
\end{equation}
where~$s$, $q\in (0,1)$, with~$N>2s$, and~$2^*_s:=\frac{2N}{N-2s}$ is the fractional
critical Sobolev exponent.

We stress that solutions of~\eqref{problema} are ``global'', in the sense that the problem is set in the whole of~$\R^N$.

We make the following assumptions on~$h$:
\begin{itemize}
\item[$(h_0)$]
$h\in L^1(\R^N)\cap L^\infty(\R^N)$ ;
\item[$(h_1)$]
there exists a ball~$B\subset \R^N$ such that~$\displaystyle{\inf_B h>0}$.
\end{itemize}

The main result of this paper is thus the following:

\begin{theorem}\label{thduesoluzioni}
Let~$q\in (0,1)$. Suppose that~$h$ satisfies~$(h_0)$ and~$(h_1)$.

Then, there exists~$\varepsilon_0>0$ such that for all~$\varepsilon \in (0,\varepsilon_0)$ problem~\eqref{problema} has 
at least two nonnegative solutions. 

Moreover, these solutions belong to~$L^\infty(\R^N)\cap C^\alpha(\R^N)$, for any~$\alpha\in(0,\min\{2s,1\})$.

Also, if~$h\ge0$ in some open set~$\Omega\subseteq\R^N$, then these solutions are strictly positive in~$\Omega$.

In particular, if~$h\geq 0$,
then these solutions are strictly positive. 
\end{theorem}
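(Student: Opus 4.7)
The plan is to find the two solutions as critical points of the energy functional
$$\mathcal{E}_\varepsilon(u):=\frac{1}{2}\iint_{\R^{2N}}\frac{|u(x)-u(y)|^2}{|x-y|^{N+2s}}\,dx\,dy-\frac{\varepsilon}{q+1}\int_{\R^N}h(x)\,u_+^{q+1}\,dx-\frac{1}{2^*_s}\int_{\R^N}u_+^{2^*_s}\,dx$$
on $\mathcal{D}^{s,2}(\R^N)$. Working with $u_+$ ensures that critical points are automatically nonnegative, since testing the Euler--Lagrange equation against $u_-$ forces $u_-\equiv 0$. The first solution branches off perturbatively from $u=0$ as a local minimizer at negative energy, while the second solution, at positive energy, arises from the Mountain Pass Theorem; together they establish the multiplicity statement.

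First I would analyze the local geometry of $\mathcal{E}_\varepsilon$ near the origin. Denoting by $\|\cdot\|$ the $\mathcal{D}^{s,2}$-norm, the fractional Sobolev embedding together with $(h_0)$ gives
$$\mathcal{E}_\varepsilon(u)\geq \tfrac{1}{2}\|u\|^2-\varepsilon C_1\|u\|^{q+1}-C_2\|u\|^{2^*_s}.$$
Because $q+1<2<2^*_s$, this furnishes a radius $R=R(\varepsilon)>0$ on whose sphere $\mathcal{E}_\varepsilon\geq \alpha_\varepsilon>0$; meanwhile $(h_1)$ supplies a compactly supported test function $\varphi\geq 0$ with $\int h\,\varphi^{q+1}>0$, and evaluating $\mathcal{E}_\varepsilon(t\varphi)$ for small $t>0$ yields $\inf_{B_R}\mathcal{E}_\varepsilon<0$. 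Minimizing over the closed ball (via Ekeland's variational principle, so that a Palais--Smale sequence is produced) gives a weak limit $u_1$, and the Brezis--Lieb lemma applied to the subcritical and critical nonlinearities shows that strong convergence holds because one stays inside a ball where the critical term is under control. This produces the first solution $u_1$, with $\mathcal{E}_\varepsilon(u_1)<0$.

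For the second solution I would apply the Mountain Pass Theorem. The MP geometry is provided by the barrier on $\{\|u\|=R\}$ that separates $u_1$ from points where $\mathcal{E}_\varepsilon(t\varphi)\to-\infty$ as $t\to +\infty$. The decisive point is to estimate the mountain-pass level $c_\varepsilon$ strictly below the compactness threshold $c^*:=\frac{s}{N}\mathcal{S}^{N/(2s)}$, where $\mathcal{S}$ is the fractional Sobolev constant; below $c^*$, the Palais--Smale condition can be recovered via a concentration-compactness/profile-decomposition argument. I would test the MP level along a path of the form $t\mapsto u_1+t\,U_{\mu,x_0}$, with $U_{\mu,x_0}$ a rescaled fractional Aubin--Talenti bubble concentrating at a point $x_0$. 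Expanding each of the three terms of $\mathcal{E}_\varepsilon$ in the concentration parameter $\mu$ and exploiting that $\mathcal{E}_\varepsilon(u_1)<0$ and that $-\varepsilon\int h\,u_+^{q+1}/(q+1)<0$, one gains the extra negative room needed to undercut $c^*$.

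The main obstacle is precisely this strict upper bound on $c_\varepsilon$: in the fractional setting the polynomial decay $|x|^{-(N-2s)}$ of $U_{\mu,x_0}$ governs the delicate interaction integrals $\int U_{\mu,x_0}^{2^*_s-1}u_1$ and $\int h\,U_{\mu,x_0}^{q+1}$, whose asymptotics degenerate in different ways as $s$ crosses $1/2$. Carefully optimizing the concentration scale $\mu=\mu(\varepsilon)$ against $\varepsilon$ (rather than relying on the \emph{a priori} estimate used in~\cite{MR3617721}, which breaks down for $s>\frac{1}{2}$) replaces the missing step and gives the bound uniformly for all $s\in(0,1)$. Once the two critical points are produced, $L^\infty$ bounds follow from a fractional Moser-type iteration based on the critical Sobolev inequality, $C^\alpha$ regularity for any $\alpha<\min\{2s,1\}$ then follows from standard Hölder regularity theory for $(-\Delta)^s$ applied to the bounded right-hand side, and strict positivity on $\Omega\subseteq\{h\geq 0\}$ is a consequence of the strong maximum principle for $(-\Delta)^s$, since on $\Omega$ the right-hand side $\varepsilon h(x)u^q+u^{2^*_s-1}$ is nonnegative and the solutions are nontrivial.
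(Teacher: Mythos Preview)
Your overall architecture---local minimum plus mountain pass, with concentration-compactness supplying the Palais--Smale condition below the Sobolev threshold---matches the paper's. But you have misdiagnosed where the obstruction for $s>\tfrac12$ sits and how it is removed. The failure in \cite{MR3617721} is \emph{not} in the mountain-pass level estimate; it is in the Palais--Smale analysis, which there was run through the Caffarelli--Silvestre extension and relied on a weighted Sobolev inequality (Proposition~3.1.1 of \cite{MR3617721}) that is simply false for $s>\tfrac12$---Appendix~\ref{APP} here gives an explicit counterexample. Your proposed remedy, ``optimizing the concentration scale $\mu=\mu(\varepsilon)$ against $\varepsilon$'', targets the wrong step. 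The paper's actual repair is structural: it abandons the extension and verifies Palais--Smale directly on the nonlocal functional via the fractional concentration-compactness principle of \cite{MR3866572}, supplemented by dyadic estimates (Lemmata~\ref{lemmalimiteDphidelta} and~\ref{lemmalimiteDphiR}) to control the cross terms that arise when localizing with cut-offs. The mountain-pass level estimate itself (Lemma~\ref{lemma6.5.7}) is handled essentially as in \cite{MR3617721}, with a case split at $N=4s$ rather than at $s=\tfrac12$, and no coupling of $\mu$ to $\varepsilon$ is involved.

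Two smaller points also need attention. First, for $\mathcal{E}_\varepsilon$ the Palais--Smale threshold is $\tfrac{s}{N}S^{N/(2s)}-C_\star\varepsilon^r$, not $\tfrac{s}{N}S^{N/(2s)}$ (see Lemma~\ref{Ag78} and Proposition~\ref{PSfepsilon}), because a nontrivial weak limit can absorb energy through the sublinear term; the paper recovers the clean threshold for the second solution by passing to the translated functional $I_\varepsilon$ centred at $u_\varepsilon$ and arguing by contradiction (Proposition~\ref{PSIepsilon}). If you stay with $\mathcal{E}_\varepsilon$ you must push the mountain-pass level below the corrected threshold, which you do not address. Second, your justification of strong convergence for the minimizing sequence (``Brezis--Lieb\ldots inside a ball where the critical term is under control'') does not by itself rule out escape of mass to infinity on $\R^N$; both pointwise concentration and vanishing at infinity must be excluded, and this is precisely what the measure-theoretic machinery of \cite{MR3866572} delivers.
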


We stress that, compared to the local case~$s=1$, the fractional case
presents further difficulties. To avoid such complications,
in~\cite{MR3617721} the Caffarelli-Silvestre extension method is used
to reduce the nonlocal problem~\eqref{problema} to a local one.
However, this strategy does not work for~$s\in (\frac{1}{2},1]$,
as we show in Appendix~\ref{APP}. Hence, here we deal directly with the
nonlocal problem, adapting the variational approach for the local
case in~\cite{MR1801341}.

With respect to~\cite{MR1801341}, however, several technical modifications are needed to address the nonlocal scenario.
The first main difference is that the concentration-compactness 
principle by Lions cannot be directly used to deal with the fractional case.
To overcome this complication, we adapt a concentration-compactness 
result in~\cite{MR3866572}. In particular, due to the nonlocality
of the probelm, using cut-off functions to prove convergence becomes more 
challenging as we have to deal with integrals presenting also contributions on 
unbounded domains. To resolve this issue we give some convergence 
results carefully tailored to address our framework.
We prove this by using a dyadic argument (see Lemmata~\ref{lemmalimiteDphidelta} and~\ref{lemmalimiteDphiR}).

Moreover, in order to prove the existence of a second solution,
in~\cite{MR1801341} the same localization procedure to find the first one is applied to a translated functional, defined by means of 
the positive part of a function (see~\cite[Proposition~2.5]{MR1801341}).
Here, the approach is not the same as in~\cite{MR1801341} and requires carefulness,
noting that the fractional gradient of the positive part of a 
function is harder to deal than the case of the classical
gradient (see Proposition~\ref{PSIepsilon}).\medskip

The rest of this paper is organized as follows.
In Section~\ref{NOTA}, for the sake of completeness, we recall some of the standard notation that will be used throughout this paper.
Then, in Section~\ref{PREL} we collect some auxiliary results available in the existing literature (with the modifications needed to make them effective in our setting).

The proof of Theorem~\ref{thduesoluzioni} is given in Sections~\ref{PRO1} and~\ref{PRO2}.
More specifically, Section~\ref{PRO1} shows the existence of a first solution as a local minimum,
while Section~\ref{PRO2} constructs a second solution via topological arguments.

We remark that the proofs carried out here will make no use of extension methods, and rather rely on suitable modifications of the original arguments in~\cite{MR1801341}, combined with the concentration-compactness theory put forth in~\cite{MR3866572}, as well as some integral estimates performed in~\cite{MR3271254}.

Finally, in Appendix~\ref{APP} we retake Proposition 3.1.1 in~\cite{MR3617721} and we show by a simple example that such a result cannot hold true when~$s>\frac12$,
thus highlighting the need of a different approach, as presented in this paper.

\section{Notation}\label{NOTA}

As customary, for all~$s\in(0,1)$, we consider the Gagliardo-Dirichlet form, defined by
\[
[u]_s^2:=\iint_{\R^{2N}}\frac{|u(x)-u(y)|^2}{|x-y|^{N+2s}}\,dx\,dy.
\]
and the functional space
\[
\mathcal{D}^{s,2}(\R^N):=
\Big\{u\in L^{2^*_s}(\R^N):\, [u]_s<+\infty \Big\}.
\]

We remark that
$$ [u]_s^2=\int_{\R^N}|D^su(x)|^2\,dx,$$
where
\[
|D^su(x)|^2:=\int_{\R^N}\frac{|u(x+h)-u(x)|^2}{|h|^{N+2s}}\,dh.
\]

We also consider the fractional Sobolev constant
\begin{equation}\label{definizioneS}
S:=\inf_{u\in \mathcal{D}^{s,2}(\R^N)}
\frac{[u]_s^2}{\|u\|_{L^{2^*_s}(\R^N)}^2}.
\end{equation}
\medskip

We will seek nonnegative solutions of~\eqref{problema} as 
critical points of the functional~$f_\varepsilon:\mathcal{D}^{s,2}(\R^N)\to \R$ defined as
\begin{equation}\label{deffepsilon}
f_\varepsilon(u):=
\frac{1}{2}\iint_{\R^{2N}}\frac{|u(x)-u(y)|^2}{|x-y|^{N+2s}}\,dx\,dy
-\frac{\varepsilon}{q+1}\int_{\R^N}h(x)u_+^{q+1}(x)\,dx
-\frac{1}{2^*_s}\int_{\R^N}u_+^{2^*_s}(x)\,dx.
\end{equation}
Notice indeed that, for all~$v\in\mathcal{D}^{s,2}(\R^N)$,
\begin{equation}\label{selanumero0}
\begin{split}
\langle f'_\varepsilon(u),v\rangle &=
\iint_{\R^{2N}}\frac{(u(x)-u(y))(v(x)-v(y))}{|x-y|^{N+2s}}\,dx\,dy\\
&\qquad-\varepsilon \int_{\R^N}h(x)u_+^{q}(x)v(x)\,dx
-\int_{\R^N}u_+^{2^*_s-1}(x)v(x)\,dx.
\end{split}\end{equation}

Moreover, critical points of the functional~\eqref{deffepsilon}
are nonnegative, according to Proposition~2.2.3 in~\cite{MR3617721}.

\section{Preliminaries from the literature}\label{PREL}

We gather here some useful results from the existing literature, stated in a form which is convenient for our purposes. For this, we let~$\phi \in W^{1,\infty}(\R^n,[0,1])$ be such that 
$\mbox{supp}(\phi)\subset B_1$ and~$\phi=1$ in~$B_{1/2}$. 

Given~$r>0$ and~$x_0\in \R^N$,
we define
\begin{equation}\label{definizionephi}
\phi_{r,x_0}(x):=\phi \left(\frac{x-x_0}{r}\right).
\end{equation}
In this setting, we recall Corollary~2.3 in~\cite{MR3866572}, used here with~$p=2$.

\begin{lem}\label{corollary2.3}
Let $\phi_{r,x_0}$ be defined as in~\eqref{definizionephi}.
Then, for all~$x\in\R^N$,
\[
|D^s\phi_{r,x_0}(x)|^2\leq C\min 
\left\{r^{-2s},r^N|x-x_0|^{-(N+2s)} \right\},
\]
where~$C>0$ depends on~$N$, $s$ and~$\|\phi_{r,x_0}\|_{W^{1,\infty}(\R^N)}$.
\end{lem}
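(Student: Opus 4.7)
The plan is to reduce to a single model case by scaling, and then estimate the resulting quantity through a standard splitting of the integration domain. Specifically, performing the change of variables $h=rk$ in the integral defining $|D^s \phi_{r,x_0}(x)|^2$ yields
\[
|D^s\phi_{r,x_0}(x)|^2 = r^{-2s}\,|D^s\phi(y)|^2,\qquad y:=\frac{x-x_0}{r},
\]
so the claim is equivalent to the scale-invariant estimate
\[
|D^s\phi(y)|^2 \leq C\,\min\bigl\{1,\;|y|^{-(N+2s)}\bigr\}\qquad\text{for all }y\in\R^N.
\]

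To get the uniform bound $|D^s\phi(y)|^2\leq C$, I would split the defining integral into $\{|k|\leq 1\}$ and $\{|k|>1\}$. On $\{|k|\leq 1\}$, the Lipschitz bound on $\phi$ gives $|\phi(y+k)-\phi(y)|\leq\|\nabla\phi\|_\infty |k|$, so the integrand is controlled by $\|\nabla\phi\|_\infty^{2}|k|^{2-N-2s}$, which is integrable near the origin because $2s<2$. On $\{|k|>1\}$, the boundedness of $\phi$ yields an integrand controlled by $4\|\phi\|_\infty^{2}|k|^{-N-2s}$, which is integrable at infinity.

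To get the decay bound, I would use that $\operatorname{supp}\phi\subset B_1$: for $|y|\ge 2$ one has $\phi(y)=0$, so
\[
|D^s\phi(y)|^2 = \int_{\R^N}\frac{|\phi(y+k)|^2}{|k|^{N+2s}}\,dk,
\]
and the integrand is nonzero only for $k\in B_1(-y)$. For such $k$ the reverse triangle inequality gives $|k|\ge |y|-1\ge |y|/2$, so the integral is bounded by $\|\phi\|_\infty^{2}\,|B_1|\,(|y|/2)^{-(N+2s)} \leq C|y|^{-(N+2s)}$.

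Assembling the two: the uniform estimate handles $|y|\leq 2$ (where $|y|^{-(N+2s)}$ is bounded below), the decay estimate handles $|y|\ge 2$, and after undoing the scaling one recovers the stated bound. I do not expect any serious obstacle — the argument is entirely elementary, and the only mild subtlety is the matching of the two regimes at $|y|\sim 1$, which is handled by absorbing a harmless absolute constant into $C$.
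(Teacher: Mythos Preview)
Your argument is correct: the scaling reduction is clean, and both the uniform bound (via the Lipschitz/$L^\infty$ splitting at $|k|=1$) and the decay bound (via the support condition forcing $|k|\ge |y|/2$ when $|y|\ge 2$) are carried out accurately. The matching of the two regimes is handled as you say, by absorbing a factor of $2^{N+2s}$ into $C$.

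There is nothing to compare against in the paper itself: the lemma is not proved there but simply quoted from Corollary~2.3 of~\cite{MR3866572} (specialized to $p=2$). Your proof is the standard elementary argument and is essentially what one finds in that reference. One small remark: your computation in fact shows that $C$ depends only on $N$, $s$, and $\|\phi\|_{W^{1,\infty}(\R^N)}$ for the \emph{unscaled} cut-off $\phi$, which is slightly sharper than the dependence on $\|\phi_{r,x_0}\|_{W^{1,\infty}(\R^N)}$ stated in the lemma (the latter norm carries an $r^{-1}$ in the gradient part); this is harmless, and indeed is the form actually used later in the paper.
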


Thee next result corresponds to Lemma~2.4 in~\cite{MR3866572}
with~$p=q=2$.

\begin{lem}\label{lemma2.4}
Let $w\in L^\infty(\R^N)$ and assume that there exist~$\alpha>0$ and~$C>0$ such that
\begin{equation}\label{fhewuoguo46859432}
0\leq w(x) \leq C|x|^{-\alpha}.
\end{equation}
Then, if~$\alpha> 2s$, the space~$\mathcal{D}^{s,2}(\R^N)$ is 
compactly embedded in~$L^2(w\,dx;\R^N)$, that is, for any sequence~$\{u_k\}_k\subset \mathcal{D}^{s,2}(\R^N)$ such that~$u_k \rightharpoonup u$ weakly in~$\mathcal{D}^{s,2}(\R^N)$ as~$k\to+\infty$,
we have that
\[
\lim_{k\to +\infty}
\int_{\R^N} |u_k(x)-u(x)|^2w(x)\,dx=0.
\]
\end{lem}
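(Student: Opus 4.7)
The plan is to split the weighted integral at a large radius $R$ and handle the two regions with complementary tools: on the exterior, the decay of $w$ at infinity combined with the $L^{2^*_s}$ Sobolev embedding; on the interior, a standard fractional Rellich--Kondrachov argument. Fix $\varepsilon > 0$ and decompose
\[
\int_{\R^N} |u_k - u|^2 w\,dx = \int_{B_R} |u_k - u|^2 w\,dx + \int_{\R^N \setminus B_R} |u_k - u|^2 w\,dx.
\]
Since $u_k \rightharpoonup u$ weakly in $\mathcal{D}^{s,2}(\R^N)$, the sequence is bounded in $\mathcal{D}^{s,2}(\R^N)$ and, by~\eqref{definizioneS}, also in $L^{2^*_s}(\R^N)$, so $M := \sup_k \|u_k - u\|_{L^{2^*_s}(\R^N)} < +\infty$.

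For the exterior, I would apply H\"older's inequality with conjugate exponents $\frac{N}{N-2s}$ and $\frac{N}{2s}$; since $\frac{2^*_s}{2} = \frac{N}{N-2s}$, this yields
\[
\int_{\R^N \setminus B_R} |u_k - u|^2 w\,dx \leq M^2 \left(\int_{\R^N \setminus B_R} w^{N/(2s)}\,dx\right)^{2s/N}.
\]
By~\eqref{fhewuoguo46859432}, one has $w(x)^{N/(2s)} \leq C^{N/(2s)} |x|^{-\alpha N/(2s)}$, and the hypothesis $\alpha > 2s$ is equivalent to $\alpha N/(2s) > N$, so this majorant is integrable at infinity and its tail vanishes as $R \to +\infty$. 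Thus $R$ can be fixed large enough that the exterior piece is at most $\varepsilon/2$, uniformly in $k$.

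For the interior, since $w \in L^\infty(\R^N)$, it suffices to prove $u_k \to u$ strongly in $L^2(B_R)$. The uniform Gagliardo-seminorm bound on $\R^N$ restricts to a uniform $[u_k]_{H^s(B_R)}$ bound, and the uniform $L^{2^*_s}(\R^N)$ bound combined with H\"older on the bounded domain $B_R$ gives a uniform $L^2(B_R)$ bound; hence $\{u_k\}$ is bounded in $H^s(B_R)$ and, by the fractional Rellich--Kondrachov theorem, precompact in $L^2(B_R)$. The weak limit must coincide with $u$, so $u_k \to u$ in $L^2(B_R)$ and the interior piece is at most $\varepsilon/2$ for $k$ large. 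Summing the two estimates yields the conclusion.

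The heart of the argument, and the reason the assumption $\alpha > 2s$ enters, is the tight matching between the weight decay and the fractional Sobolev scaling: the threshold $\alpha > 2s$ is precisely what makes $w^{N/(2s)}$ integrable at infinity under~\eqref{fhewuoguo46859432}, which is in turn exactly what H\"older duality with the critical exponent $2^*_s$ requires. I expect verifying this exterior estimate to be the main delicate step, whereas the interior compactness reduces to a routine application of local fractional Rellich--Kondrachov once the uniform $H^s(B_R)$ bound is in place.
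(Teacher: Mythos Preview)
Your argument is correct. Note, however, that the paper does not supply its own proof of this lemma: it simply records the statement as Lemma~2.4 of~\cite{MR3866572} specialized to $p=q=2$. Your splitting at radius $R$, with H\"older against the critical exponent $2^*_s$ on the exterior (where $\alpha>2s$ is exactly the condition making $w^{N/(2s)}$ integrable at infinity) and local fractional Rellich--Kondrachov on the interior, is the standard route and is what one finds in that reference; the paper itself uses the same local compactness (see, e.g., the convergences recorded in~\eqref{covergenzadebole00} and~\eqref{covergenzadebole}).
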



The proofs of the next two results can be found in~\cite{MR3866572}, but since they are not presented as stand alone
statements, we provide here the proof for the facility of the reader.
For the sake of completeness, we also mention that the argument after formula~(2.9)
in~\cite{MR3866572} should be amended by using a dyadic argument, as detailed
in the forthcoming Lemma~\ref{lemmalimiteDphiR}.

\begin{lem}\label{lemmalimiteDphidelta}
Let $u \in \mathcal{D}^{s,2}(\R^N)$ and~$\phi_{\delta,x_0}$ be defined 
as in~\eqref{definizionephi}. Then,
\[
\lim_{\delta \to 0}
\int_{\R^N}|u(x)|^2|D^s\phi_{\delta,x_0}(x)|^2 \,dx=0.
\]
\end{lem}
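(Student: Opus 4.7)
The plan is to apply Lemma~\ref{corollary2.3} with $r = \delta$ and then split the domain of integration at $|x - x_0| = \delta$, where the two candidates in the minimum coincide (for $|x-x_0|\le\delta$ the binding bound is $\delta^{-2s}$, while for $|x-x_0|\ge\delta$ it is $\delta^N|x-x_0|^{-(N+2s)}$). Writing $B_\delta := B_\delta(x_0)$, this yields
\begin{equation*}
\int_{\R^N} |u(x)|^2 \, |D^s \phi_{\delta,x_0}(x)|^2 \, dx
\leq C \delta^{-2s} \int_{B_\delta} |u|^2 \, dx
+ C \delta^N \int_{\R^N \setminus B_\delta} \frac{|u(x)|^2}{|x - x_0|^{N+2s}} \, dx
=: I_1(\delta) + I_2(\delta).
\end{equation*}

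For the interior term, H\"older's inequality with conjugate exponents $2^*_s/2$ and $N/(2s)$ (noting $1 - 2/2^*_s = 2s/N$) gives $I_1(\delta) \le C \delta^{-2s} |B_\delta|^{2s/N} \| u \|_{L^{2^*_s}(B_\delta)}^2 = C' \| u \|_{L^{2^*_s}(B_\delta)}^2$, which vanishes as $\delta \to 0$ by absolute continuity of the Lebesgue integral, since $u \in L^{2^*_s}(\R^N)$.

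The exterior term $I_2$ is the main obstacle: a direct H\"older estimate applied to the entire exterior $\R^N\setminus B_\delta$ yields only a bound of the form $\| u \|_{L^{2^*_s}(\R^N)}^2$, with no vanishing factor in $\delta$. To recover smallness, I would perform a dyadic decomposition of the exterior, precisely as suggested by the paper's remark after Lemma~\ref{corollary2.3}. Setting $A_k := B_{2^{k+1}\delta}(x_0) \setminus B_{2^k \delta}(x_0)$ for $k \geq 0$, one has $|x - x_0| \geq 2^k \delta$ on $A_k$, and H\"older together with $|A_k| \leq C (2^{k+1}\delta)^N$ bounds
\begin{equation*}
\int_{A_k} |u|^2 \, dx \leq C \, (2^{k+1}\delta)^{2s} \, \| u \|_{L^{2^*_s}(A_k)}^2.
\end{equation*}

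Substituting back and observing that all powers of $\delta$ cancel, one arrives at $I_2(\delta) \le C \sum_{k \geq 0} 2^{-kN} \| u \|_{L^{2^*_s}(A_k)}^2$. For each fixed $k$, the measure $|A_k|$ tends to zero as $\delta \to 0$, hence $\| u \|_{L^{2^*_s}(A_k)} \to 0$. Since the summands are uniformly dominated by the summable sequence $2^{-kN} \| u \|_{L^{2^*_s}(\R^N)}^2$, the dominated convergence theorem for series then yields $I_2(\delta) \to 0$. Combining this with the bound for $I_1(\delta)$ completes the argument. The crucial point that a single-scale H\"older estimate would miss is precisely the dyadic decomposition coupled with dominated convergence for series: it converts the fact that each shrinking annulus has small $L^{2^*_s}$-mass into an honest vanishing bound, despite the slow decay of the weight $|x-x_0|^{-(N+2s)}$.
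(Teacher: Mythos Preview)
Your proof is correct and follows essentially the same route as the paper: apply Lemma~\ref{corollary2.3}, split at $|x-x_0|=\delta$, handle the interior via H\"older, and treat the exterior by the dyadic decomposition into annuli $B_{2^{k+1}\delta}\setminus B_{2^k\delta}$. The only cosmetic difference is that the paper concludes via an explicit $\varepsilon$--$k_0$ tail-splitting argument, whereas you invoke dominated convergence for series; these are equivalent.
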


\begin{proof}
In light of Lemma~\ref{corollary2.3}, we see that
\begin{equation}\label{31BIS0}\begin{split}&
\int_{\R^N}|u(x)|^2|D^s\phi_{\delta,x_0}(x)|^2 \,dx\\&\qquad
\leq C\left(\delta^{-2s}\int_{B_\delta(x_0)}|u(x)|^2\,dx
+\delta^N\int_{\R^N\setminus B_\delta(x_0)}\frac{|u(x)|^2}{|x-x_0|^{N+2s}}\,dx \right).\end{split}
\end{equation}

Also, using the H\"older inequality with exponents~$\frac{2^*_s}2$
and~$\frac{N}{2s}$,
\begin{eqnarray*}
\delta^{-2s}\int_{B_\delta(x_0)}|u(x)|^2\,dx
\leq \delta^{-2s}\left(\int_{B_\delta(x_0)}|u(x)|^{2^*_s}\,dx \right)
^\frac{2}{2^*_s}|B_\delta|^{\frac{2s}{N}}
=C\left(\int_{B_\delta(x_0)}|u(x)|^{2^*_s}\,dx\right)^\frac{2}{2^*_s},
\end{eqnarray*}
for some~$C>0$ depending on~$N$ and~$s$ but not on~$\delta$.

Therefore, since~$u\in L^{2^*_s}(\R^N)$, it follows that
\begin{equation}\label{31BIS}
\lim_{\delta \to 0}\delta^{-2s}\int_{B_\delta(x_0)}|u(x)|^2\,dx=0.
\end{equation}

Furthermore, we point out that
\begin{equation}\label{LFSD}
\begin{aligned}
\delta^N\int_{\R^N\setminus B_\delta(x_0)}\frac{|u(x)|^2}{|x-x_0|^{N+2s}}\,dx
&=\sum_{k=0}^{+\infty} \delta^N \int_{B_{2^{k+1}\delta}(x_0)\setminus
B_{2^k\delta}(x_0)}\frac{|u(x)|^2}{|x-x_0|^{N+2s}}\,dx \\
&\leq \sum_{k=0}^{+\infty} \frac{1}{2^{k(N+2s)}}\frac{1}{\delta^{2s}}
\int_{B_{2^{k+1}\delta}(x_0)}|u(x)|^2\,dx \\
&\leq \sum_{k=0}^{+\infty} \frac{1}{2^{k(N+2s)}}\frac{1}{\delta^{2s}}
\left(\int_{B_{2^{k+1}\delta}(x_0)}|u(x)|^{2^*_s}\,dx \right)^\frac{2}{2^*_s}|B_{2^{k+1}\delta}|^\frac{2s}{N} \\
&=c \sum_{k=0}^{+\infty} \frac{1}{2^{Nk}}
\left(\int_{B_{2^{k+1}\delta}(x_0)}|u(x)|^{2^*_s}\,dx \right)^\frac{2}{2^*_s},
\end{aligned}
\end{equation}
where~$c$ only depends on~$N$ and~$s$.

Now, let~$\varepsilon>0$ and take~$k_0\in \N$, depending on~$\varepsilon$, such that
\[
c \sum_{k=k_0+1}^{+\infty} \frac{1}{2^{Nk}}<\varepsilon.
\]
In this way, we deduce from~\eqref{LFSD} that
\[
\begin{aligned}
\delta^N\int_{\R^N\setminus B_\delta(x_0)}\frac{|u(x)|^2}{|x-x_0|^{N+2s}}\,dx
&\leq  \varepsilon\|u\|_{L^{2^*_s}(\R^N)}^2
+c\sum_{k=0}^{k_0}\frac{1}{2^{Nk}}
\left(\int_{B_{2^{k_0+1}\delta}(x_0)}|u(x)|^{2^*_s}\,dx \right)^\frac{2}{2^*_s} \\
&=\varepsilon\|u\|_{L^{2^*_s}(\R^N)}^2
+C\left(\int_{B_{2^{k_0+1}\delta}(x_0)}|u(x)|^{2^*_s}\,dx \right)^\frac{2}{2^*_s},
\end{aligned}
\]
for some~$C>0$ depending on~$N$, $s$ and~$k_0$ (and therefore on~$\varepsilon$).

As a result, we obtain that
\[
\limsup_{\delta \to 0}
\delta^N\int_{\R^N\setminus B_\delta(x_0)}\frac{|u(x)|^2}{|x-x_0|^{N+2s}}\,dx
\leq \varepsilon\|u\|_{L^{2^*_s}(\R^N)}^2.
\]
Taking the limit as~$\varepsilon \to 0$,
we conclude that
\[
\lim_{\delta \to 0}
\delta^N\int_{\R^N\setminus B_\delta(x_0)}\frac{|u(x)|^2}{|x-x_0|^{N+2s}}\,dx
=0.
\]
{F}rom this, \eqref{31BIS0} and~\eqref{31BIS}, we obtain the desired result.
\end{proof}

\begin{lem}\label{lemmalimiteDphiR}
Let $u \in \mathcal{D}^{s,2}(\R^N)$ and~$\phi_{R,0}$ be defined as in~\eqref{definizionephi}.
Let also~$\phi_R:=1-\phi_{R,0}$.

Then,
\begin{equation*}
\lim_{R \to +\infty}
\int_{\R^N}|u(x)|^2|D^s\phi_R(x)|^2 \,dx=0.
\end{equation*}
\end{lem}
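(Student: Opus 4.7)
The plan is to mirror the strategy of Lemma~\ref{lemmalimiteDphidelta}, but with the roles of ``small'' and ``large'' $|x|$ swapped, since now we are looking at the limit $R\to+\infty$. The first observation is that $\phi_R=1-\phi_{R,0}$, so $\phi_R(x+h)-\phi_R(x)=-(\phi_{R,0}(x+h)-\phi_{R,0}(x))$ and therefore $|D^s\phi_R(x)|^2=|D^s\phi_{R,0}(x)|^2$. Lemma~\ref{corollary2.3} applied with~$x_0=0$ and~$r=R$ then yields the pointwise bound
\[
|D^s\phi_R(x)|^2\leq C\min\bigl\{R^{-2s},\,R^N|x|^{-(N+2s)}\bigr\}.
\]
Splitting the integration domain at the sphere~$\{|x|=R\}$ (so that the first bound is used inside~$B_R$ and the second outside), we get
\[
\int_{\R^N}|u|^2|D^s\phi_R|^2\,dx\leq C\,R^{-2s}\int_{B_R}|u|^2\,dx+C\,R^N\int_{\R^N\setminus B_R}\frac{|u|^2}{|x|^{N+2s}}\,dx.
\]

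For the exterior term, I would perform exactly the dyadic decomposition of Lemma~\ref{lemmalimiteDphidelta}, writing $\R^N\setminus B_R=\bigcup_{k\geq 0}(B_{2^{k+1}R}\setminus B_{2^kR})$. Bounding $|x|^{-(N+2s)}\leq (2^kR)^{-(N+2s)}$ on each shell and applying H\"older with exponents~$2^*_s/2$ and~$N/(2s)$, one obtains
\[
R^N\int_{\R^N\setminus B_R}\frac{|u|^2}{|x|^{N+2s}}\,dx\leq C\sum_{k=0}^{+\infty}2^{-kN}\|u\|_{L^{2^*_s}(B_{2^{k+1}R}\setminus B_{2^kR})}^2\leq C\,\|u\|_{L^{2^*_s}(\R^N\setminus B_R)}^2,
\]
where all norms are taken on subsets of~$\R^N\setminus B_R$. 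Since~$u\in L^{2^*_s}(\R^N)$, the right-hand side vanishes as~$R\to+\infty$.

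The delicate piece is the interior term~$R^{-2s}\int_{B_R}|u|^2\,dx$. A direct H\"older estimate yields~$C\|u\|_{L^{2^*_s}(\R^N)}^2$, which does \emph{not} decay, so a further splitting is required. I would introduce an auxiliary radius~$M\geq 1$ and write $B_R=B_M\cup(B_R\setminus B_M)$. On~$B_M$, the integral~$\int_{B_M}|u|^2\,dx$ is finite (again by H\"older and local~$L^{2^*_s}$ integrability), so that~$R^{-2s}\int_{B_M}|u|^2\,dx\to 0$ as~$R\to+\infty$ with~$M$ fixed. On the annulus~$B_R\setminus B_M$, H\"older with the same exponents as before gives
\[
R^{-2s}\int_{B_R\setminus B_M}|u|^2\,dx\leq C\,R^{-2s}\cdot R^{2s}\|u\|_{L^{2^*_s}(\R^N\setminus B_M)}^2=C\,\|u\|_{L^{2^*_s}(\R^N\setminus B_M)}^2,
\]
which is independent of~$R$.

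Putting the three contributions together, taking~$\limsup_{R\to+\infty}$ first and then~$M\to+\infty$ (using once more the absolute continuity of the~$L^{2^*_s}$ norm) forces the~$\limsup$ to vanish, yielding the claim. The main obstacle is precisely the inner integral: unlike the analogous step in Lemma~\ref{lemmalimiteDphidelta}, the naive H\"older bound is not a shrinking quantity, so one must insert the auxiliary scale~$M$ and play the two limits~$R\to+\infty$ and~$M\to+\infty$ against each other in the right order, which is the adaptation that makes the dyadic/concentration argument work in the regime of large balls.
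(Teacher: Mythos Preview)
Your proof is correct. The overall structure---note that $|D^s\phi_R|^2=|D^s\phi_{R,0}|^2$, invoke Lemma~\ref{corollary2.3}, split at $|x|=R$, and show each piece tends to zero---matches the paper exactly. The difference lies in which piece receives the more elaborate treatment. You run a dyadic decomposition on the \emph{exterior} integral $R^N\int_{\R^N\setminus B_R}|u|^2|x|^{-(N+2s)}\,dx$ and handle the \emph{interior} integral $R^{-2s}\int_{B_R}|u|^2\,dx$ by introducing a single auxiliary scale~$M$ and taking $R\to+\infty$ then $M\to+\infty$. The paper does the opposite: it handles the exterior term by a single application of H\"older (the weight $|x|^{-(N+2s)}$ lies in $L^{N/(2s)}(\R^N\setminus B_R)$ with norm comparable to $R^{-N}$, which cancels the prefactor and leaves $\|u\|_{L^{2^*_s}(\R^N\setminus B_R)}^2$), and it runs the dyadic decomposition on the \emph{interior}, splitting $B_R$ into annuli $B_{R/2^k}\setminus B_{R/2^{k+1}}$ and summing the geometric tail. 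Your two-scale argument for the interior is in fact a streamlined version of what the dyadic sum achieves, and your dyadic argument for the exterior is slightly heavier than necessary (direct H\"older already works there), but both routes are valid and lead to the same conclusion.
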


\begin{proof} 
By Lemma~\ref{corollary2.3}, we have that
\begin{equation}\label{78PIT.09}
|D^s\phi_{R}(x)|^2=|D^s\phi_{R,0}(x)|^2\leq C\min 
\left\{R^{-2s},R^N|x|^{-(N+2s)} \right\}.
\end{equation}
As a consequence,
\begin{equation}
\label{dweioty3480y6ty24t23yghevi0}
\int_{\R^N}|u(x)|^2|D^s\phi_R(x)|^2 \,dx
\le C\left(R^{-2s}\int_{B_R}|u(x)|^2\,dx
+R^N\int_{\R^N\setminus B_R}\frac{|u(x)|^2}{|x|^{N+2s}}\,dx\right).
\end{equation}

Now, we observe that
\begin{equation*}\begin{split}
R^{-2s}\int_{B_R}|u(x)|^2\,dx&=R^{-2s}\sum_{k=0}^{+\infty}\int_{B_{{R}/{2^k}}\setminus  B_{{R}/{2^{k+1}}}}|u(x)|^2\,dx\\&\le
R^{-2s}\sum_{k=0}^{+\infty}\left(
\int_{B_{{R}/{2^k}}\setminus  B_{{R}/{2^{k+1}}}}|u(x)|^{2^*_s}\,dx\right)^{\frac2{2^*_s}}
|B_{{R}/{2^k}}|^{\frac{2s}N}\\&\le
c\sum_{k=0}^{+\infty}\frac1{2^{2sk}}\left(
\int_{B_{{R}/{2^k}}\setminus  B_{{R}/{2^{k+1}}}}|u(x)|^{2^*_s}\,dx\right)^{\frac2{2^*_s}}.
\end{split}
\end{equation*}

We point out that for all~$\varepsilon>0$ we can find~$k_0$ such that
$$ c\sum_{k=k_0+1}^{+\infty}\frac1{2^{2sk}}<\varepsilon,$$
and therefore
\begin{eqnarray*}
R^{-2s}\int_{B_R}|u(x)|^2\,dx &\le&\varepsilon\|u\|_{L^{2^*_s}(\R^N)}+
c\sum_{k=0}^{k_0}\frac1{2^{2sk}}\left(
\int_{B_{{R}/{2^k}}\setminus  B_{{R}/{2^{k+1}}}}|u(x)|^{2^*_s}\,dx\right)^{\frac2{2^*_s}}\\&\le&
\varepsilon\|u\|_{L^{2^*_s}(\R^N)}+
c\sum_{k=0}^{k_0}\frac1{2^{2sk}}\left(
\int_{\R^N\setminus  B_{{R}/{2^{k_0+1}}}}|u(x)|^{2^*_s}\,dx\right)^{\frac2{2^*_s}}.
\end{eqnarray*}
As a consequence,
$$ \limsup_{R\to+\infty} R^{-2s}\int_{B_R}|u(x)|^2\,dx\le \varepsilon\|u\|_{L^{2^*_s}(\R^N)}.$$
Sending~$\varepsilon\to0$, we obtain that
\begin{equation}\label{dweioty3480y6ty24t23yghevi}
 \lim_{R\to+\infty} R^{-2s}\int_{B_R}|u(x)|^2\,dx=0.
\end{equation}

Furthermore,
\begin{eqnarray*}
R^N\int_{\R^N\setminus B_R}\frac{|u(x)|^2}{|x|^{N+2s}}\,dx
&\le& R^N\left(\int_{\R^N\setminus B_R}|u(x)|^{2^*_s}\,dx\right)^{\frac2{2^*_s}}
\left( \int_{\R^N\setminus B_R}\frac{dx}{|x|^{\frac{N(N+2s)}{2s}}}\right)^{\frac{2s}N}\\
&=& C \left(\int_{\R^N\setminus B_R}|u(x)|^{2^*_s}\,dx\right)^{\frac2{2^*_s}},
\end{eqnarray*}
for some~$C>0$ depending on~$N$ and~$s$.

Since~$u\in L^{2^*_s}(\R^N)$, this gives that
$$\lim_{R\to+\infty}R^N\int_{\R^N\setminus B_R}\frac{|u(x)|^2}{|x|^{N+2s}}\,dx=0.$$
{F}rom this, \eqref{dweioty3480y6ty24t23yghevi0}
and~\eqref{dweioty3480y6ty24t23yghevi}, we obtain the desired claim.
\end{proof}

The following estimates corresponds to Lemmata~6.5.4, 6.5.5 and~6.5.6 in~\cite{MR3617721}, and we recall this statements for the facility of the reader.

\begin{lem}\label{lemma6.5.4}
For any~$a$, $b\geq 0$ and any~$p>1$, we have that
\[
(a+b)^p\geq a^p+b^p.
\]
Also, if $a$, $b>0$, we have that
\[
(a+b)^p> a^p+b^p.
\]
\end{lem}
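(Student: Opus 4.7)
The plan is to reduce the two-variable inequality to a one-variable statement via a simple normalization. If either $a=0$ or $b=0$, then the inequality $(a+b)^p \geq a^p+b^p$ holds with equality, so the first assertion is immediate in that case. Assume therefore that $a$, $b>0$, and set
\[
t:=\frac{a}{a+b}\in(0,1),\qquad 1-t=\frac{b}{a+b}.
\]
Dividing through by $(a+b)^p>0$, the inequality to be proved becomes equivalent to
\[
1\;\geq\;t^p+(1-t)^p.
\]

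Since $p>1$ and $0<t<1$ (and likewise $0<1-t<1$), we have $t^p<t$ and $(1-t)^p<1-t$ strictly. Adding these two strict inequalities yields
\[
t^p+(1-t)^p\;<\;t+(1-t)\;=\;1,
\]
which is precisely the desired strict inequality when $a$, $b>0$. The non-strict inequality then follows in all cases $a$, $b\geq 0$, since the only cases not covered by the strict argument are those with $a=0$ or $b=0$, where equality holds.

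An equally clean alternative would be to fix $a\geq 0$ and consider the auxiliary function $g(t):=(a+t)^p-a^p-t^p$ on $[0,+\infty)$. One has $g(0)=0$ and $g'(t)=p\bigl[(a+t)^{p-1}-t^{p-1}\bigr]$, which is nonnegative because the map $x\mapsto x^{p-1}$ is nondecreasing on $[0,+\infty)$ for $p>1$, and is strictly positive for $t>0$ whenever $a>0$. Integrating from $0$ to $b$ gives $g(b)\geq 0$ in general, with strict inequality when $a$, $b>0$. There is essentially no obstacle in this lemma: it is a classical superadditivity property of the convex map $x\mapsto x^p$ on $[0,+\infty)$, and the only subtlety is to keep track of when the inequalities are strict, which is handled by simply isolating the boundary cases $a=0$ or $b=0$.
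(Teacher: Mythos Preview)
Your proof is correct. The paper does not actually prove this lemma; it merely recalls the statement from \cite{MR3617721} (Lemma~6.5.4 there) for the reader's convenience, so there is no ``paper's own proof'' to compare against. Both of your arguments---the normalization $t=a/(a+b)$ reducing to $t^p+(1-t)^p<1$, and the monotonicity of $g(t)=(a+t)^p-a^p-t^p$---are standard and complete.
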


\begin{lem}\label{lemma6.5.5}
Let $p\geq 2$. Then, there exists~$c_p>0$ such that, for any~$a$, $b\geq 0$,
\[
(a+b)^p\geq a^p+b^p+c_p a^{p-1}b.
\]
\end{lem}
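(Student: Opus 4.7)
The plan is to reduce the inequality to a one-variable question by scaling, and then to derive the sharpened inequality from Lemma~\ref{lemma6.5.4} after one differentiation. First I would dispose of the degenerate case $a=0$, where both sides collapse to $b^p$ (the term $c_p a^{p-1} b$ vanishes since $p-1\ge 1$). Assuming $a>0$, I would divide through by $a^p$ and set $t:=b/a\ge 0$, rewriting the claim as
\[
(1+t)^p \;\ge\; 1+t^p+c_p\,t \qquad \text{for all } t\ge 0.
\]

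The natural candidate is $c_p:=p$. To verify that this works, I would introduce
\[
h(t):=(1+t)^p-t^p-1-p\,t,
\]
note that $h(0)=0$, and differentiate:
\[
h'(t)=p\bigl[(1+t)^{p-1}-t^{p-1}-1\bigr].
\]
This is where Lemma~\ref{lemma6.5.4} enters: since $p\ge 2$, the exponent $p-1$ is at least $1$, so applying that lemma to the pair $(1,t)$ with exponent $p-1>1$ yields $(1+t)^{p-1}\ge 1+t^{p-1}$; in the borderline case $p=2$ this is just the trivial identity $1+t=1+t$. In either situation $h'(t)\ge 0$ on $[0,\infty)$, so $h$ is nondecreasing and $h(t)\ge h(0)=0$. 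Undoing the scaling and multiplying by $a^p$ gives the stated inequality with $c_p=p$.

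There is no real obstacle in this argument: the only observation to make is that the statement is a quantified strengthening of the previous lemma obtained by applying it one level of differentiation below. One could afford a slightly larger constant by optimising $h(t)/t$ over $t>0$ (a continuous function on $(0,\infty)$ with positive limits at $0$ and $\infty$ when $p\ge 2$), but for subsequent use in the paper the explicit choice $c_p=p$ is more than sufficient.
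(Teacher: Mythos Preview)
Your proof is correct. The paper does not prove this lemma itself; it merely records the statement and cites Lemma~6.5.5 of~\cite{MR3617721}, so there is no proof in the paper to compare against. Your scaling-and-differentiation argument with the explicit choice $c_p=p$ is clean and self-contained. One minor quibble: the closing parenthetical about optimising $h(t)/t$ to obtain a larger constant is off at $p=2$, where $h\equiv 0$ and $c_2=2$ is already sharp---but this is a side remark and does not affect the main argument.
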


\begin{lem}\label{lemma6.5.6}
Let $p\in (1,2)$ and $k>0$. Then, there exists~$c_{p,k}>0$ such 
that, for any~$a>0$ and~$b\geq 0$ with~$\frac{b}{a}\in [0,k]$,
\[
(a+b)^p\geq a^p+b^p+c_{p,k} a^{p-1}b.
\]
\end{lem}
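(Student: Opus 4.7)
The plan is to normalize by $a$ and reduce to a one-variable statement. Setting $t := b/a \in [0,k]$ and dividing the target inequality by $a^p$, the lemma becomes equivalent to showing that, for some $c_{p,k}>0$,
\[
g(t) := (1+t)^p - 1 - t^p \;\geq\; c_{p,k}\, t \qquad \text{for all } t\in [0,k].
\]
After this reduction, I would substitute $t=b/a$ and multiply back by $a^p$ to recover the desired inequality in $(a,b)$.

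The central object is the quotient $\psi(t) := g(t)/t$ on $(0,k]$, and the strategy is to show that $\psi$ extends continuously to a strictly positive function on the compact interval $[0,k]$; its minimum then yields the constant $c_{p,k}$. The behavior near $0$ is handled by a Taylor expansion: $(1+t)^p = 1 + pt + O(t^2)$, while $t^p = o(t)$ since $p>1$, so
\[
\psi(t) = \frac{pt + O(t^2) - t^p}{t} \longrightarrow p \quad \text{as } t\to 0^+.
\]
Thus $\psi$ extends continuously to $0$ with value $p>0$.

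To ensure positivity on $(0,k]$, I would differentiate: $g'(t) = p\bigl[(1+t)^{p-1} - t^{p-1}\bigr]$. Since $p-1 \in (0,1)$, the function $x\mapsto x^{p-1}$ is strictly increasing on $[0,\infty)$, so $(1+t)^{p-1} > t^{p-1}$ for every $t>0$, giving $g'(t)>0$. Combined with $g(0)=0$, this yields $g(t)>0$, and hence $\psi(t)>0$, for all $t>0$. Continuity on $[0,k]$ together with strict positivity then forces
\[
c_{p,k} := \min_{t\in [0,k]} \psi(t) > 0,
\]
which is exactly the constant required.

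There is no genuine obstacle here; the only subtle point is the necessity of the bound $b/a\leq k$. This is essential because $\psi(t) \sim (p-1)t^{p-2}\to 0$ as $t\to+\infty$ (since $p<2$), so no constant independent of $k$ can work. Restricting to the compact set $[0,k]$ is therefore both necessary and sufficient, and the compactness/continuity argument outlined above closes the proof cleanly.
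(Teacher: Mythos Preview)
Your proof is correct and follows the natural approach: homogenize by setting $t=b/a$, reduce to showing $\psi(t)=\big((1+t)^p-1-t^p\big)/t$ is bounded below by a positive constant on $[0,k]$, and conclude by continuity plus compactness. The paper does not give its own proof of this lemma but simply cites it from~\cite{MR3617721}; your argument is precisely the standard one used there.
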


\section{Existence of a local minimum}\label{PRO1}

In this section we prove the existence of a first solution to
problem~\eqref{problema} as a local minimim of the functional~$f_\varepsilon$ given in~\eqref{deffepsilon}. To this end,
we start with a result about the convergence of sequences of
functions in bounded domains.

\begin{lem}\label{lemmmaconvergenzaLrloc}
Let $K$ be any bounded set in~$\R^N$. Let~$\{u_n\}_n$ be a 
sequence in~$L^{2^*_s}(\R^N)$ converging, as~$n\to+\infty$, to some~$u$ in~$L^r_{loc}(\R^N)$ for some~$r\in [1,2^*_s)$.

Then,
\begin{equation}\label{eqLrloc1}
\lim_{n\to+\infty}\int_K |u_n^q(x)-u^q(x)|^\frac{r}{q}\,dx=0
\end{equation}
and
\begin{equation}\label{eqLrloc2}
\lim_{n\to+\infty}\int_K 
|u_n^{2^*_s-1}(x)-u^{2^*_s-1}(x)|^\frac{r}{2^*_s-1}\,dx=0.
\end{equation}
\end{lem}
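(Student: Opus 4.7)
The plan is to prove both limits by elementary pointwise inequalities combined with the assumed $L^r_{loc}$-convergence. Since we are working with fractional powers and with positive parts elsewhere in the paper, I would assume (without loss of generality, replacing $u_n$, $u$ by their positive parts, an operation which is $1$-Lipschitz and so preserves $L^r_{loc}$-convergence) that $u_n$, $u\geq 0$ almost everywhere.

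For~\eqref{eqLrloc1}, since~$q\in(0,1)$, the map~$t\mapsto t^q$ is $q$-H\"older continuous on~$[0,+\infty)$, and so
\[
|a^q-b^q|\leq |a-b|^q \qquad\text{for all }a,\,b\geq 0.
\]
Raising to the power~$r/q$ and integrating over~$K$ gives
\[
\int_K |u_n^q(x)-u^q(x)|^{r/q}\,dx\leq \int_K|u_n(x)-u(x)|^r\,dx,
\]
and the right-hand side tends to zero as~$n\to+\infty$ by the hypothesis that~$u_n\to u$ in~$L^r_{loc}(\R^N)$.

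For~\eqref{eqLrloc2}, set~$p:=2^*_s-1$; the assumption~$N>2s$ ensures~$p>1$. By the mean value theorem applied to~$t\mapsto t^p$,
\[
|a^p-b^p|\leq p\,(a+b)^{p-1}|a-b|\qquad\text{for all }a,\,b\geq 0,
\]
whence
\[
|u_n^p-u^p|^{r/p}\leq p^{r/p}(u_n+u)^{(p-1)r/p}\,|u_n-u|^{r/p}.
\]
I would then apply H\"older's inequality with conjugate exponents~$\alpha=p$ and~$\alpha'=p/(p-1)$, which are tailored precisely so that both factors get raised to an exponent that produces an~$L^r(K)$ norm:
\[
\int_K |u_n^p-u^p|^{r/p}\,dx\leq C\left(\int_K (u_n+u)^r\,dx\right)^{(p-1)/p}\left(\int_K|u_n-u|^r\,dx\right)^{1/p}.
\]
The second factor vanishes as~$n\to+\infty$ by hypothesis, while the first is bounded uniformly in~$n$ because sequences converging in~$L^r(K)$ are bounded in~$L^r(K)$. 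This yields~\eqref{eqLrloc2}.

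The only mildly delicate step is the selection of the H\"older exponents in the second limit, but this is essentially forced: the exponent~$\alpha=p$ is the unique choice that turns~$|u_n-u|^{r/p}$ into an~$L^r$-integrand, and the matching conjugate~$\alpha'$ then automatically converts~$(u_n+u)^{(p-1)r/p}$ into an~$L^r$-integrand as well. All other ingredients are routine.
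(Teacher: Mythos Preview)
Your proof is correct and follows essentially the same route as the paper's: a pointwise H\"older-type inequality for~\eqref{eqLrloc1} and a mean-value estimate combined with H\"older's inequality (with the same exponents~$p$ and~$p/(p-1)$) for~\eqref{eqLrloc2}. Your argument for~\eqref{eqLrloc1} is in fact slightly cleaner, since the subadditivity of~$t\mapsto t^q$ gives the inequality~$|a^q-b^q|\le|a-b|^q$ directly with constant~$1$, whereas the paper goes through an auxiliary function to obtain the same bound with an unspecified constant~$L\ge1$.
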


\begin{proof}
For any~$t\geq -1$, let
\[
f(t):=\frac{|(1+t)^q-1|}{|t|^q}.
\]
Recalling that~$q\in(0,1)$, one can show that
\[
f(-1)=1,\qquad \lim_{t\to 0}f(t)=0 \qquad \mbox{and} \qquad
\lim_{t\to +\infty}f(t)=1.
\]
Thus, defining
\[
L:=\sup_{t\geq -1}f(t),
\]
we have that~$L\in[1,+\infty)$. 

Now we claim that
\begin{equation}\label{eq4.1.3}
|a^q-b^q|\leq L|a-b|^q
\end{equation}
for any~$a$, $b\geq 0$. Without loss of generality, we can suppose 
that~$b\neq 0$, and write~$t:=\frac{a}{b}-1$. 

Then, we have that
\[
|a^q-b^q|=b^q|(1+t)^q-1|\leq Lb^q|t|^q=L|a-b|^q,
\]
which proves~\eqref{eq4.1.3}.

{F}rom~\eqref{eq4.1.3} and the convergence of~$u_n$, we find that
\[
\lim_{n\to+\infty}\int_K |u_n^q(x)-u^q(x)|^\frac{r}{q}\,dx
\leq L^{\frac{r}{q}}\lim_{n\to+\infty}\int_K |u_n(x)-u(x)|^r\,dx=0,
\]
which establishes~\eqref{eqLrloc1}.

Now we prove~\eqref{eqLrloc2}. To do this, we first notice that,
for any~$a\geq b\geq 0$,
\[
a^{2^*_s-1}-b^{2^*_s-1}=(2^*_s-1)\int_b^a t^{2^*_s-2}\,dt
\leq (2^*_s-1)a^{2^*_s-2}(a-b)
\leq (2^*_s-1)(a+b)^{2^*_s-2}(a-b).
\]
Actually, by possibly exchanging the roles of~$a$ and~$b$, we conclude that, 
for any~$a$, $b\geq 0$,
\[
|a^{2^*_s-1}-b^{2^*_s-1}|\leq (2^*_s-1)(a+b)^{2^*_s-2}|a-b|.
\]
As a consequence, for any~$a$, $b\geq 0$,
\[
|a^{2^*_s-1}-b^{2^*_s-1}|^\frac{r}{2^*_s-1}
\leq (2^*_s-1)^\frac{r}{2^*_s-1}(a+b)^{\frac{2^*_s-2}{2^*_s-1}r}
|a-b|^\frac{r}{2^*_s-1}.
\]

{F}rom this and the H\"older inequality with exponents~$2^*_s-1$ and~$\frac{2^*_s-1}{2^*_s-2}$, we obtain that
\[
\begin{aligned}
\int_K &|u_n^{2^*_s-1}(x)-u^{2^*_s-1}(x)|^\frac{r}{2^*_s-1}\,dx \\
&\leq (2^*_s-1)^\frac{r}{2^*_s-1} \int_K
(u_n(x)+u(x))^{\frac{2^*_s-2}{2^*_s-1}r}
(u_n(x)-u(x))^\frac{r}{2^*_s-1}\,dx \\
&\leq (2^*_s-1)^\frac{r}{2^*_s-1}
\left( \int_K(u_n(x)+u(x))^r\,dx \right)^\frac{2^*_s-2}{2^*_s-1}
\left( \int_K(u_n(x)-u(x))^r\,dx \right)^\frac{1}{2^*_s-1} \\
&\leq (2^*_s-1)^\frac{r}{2^*_s-1}
\|u_n+u\|_{L^r(K)}^\frac{r(2^*_s-2)}{2^*_s-1}
\|u_n-u\|_{L^r(K)}^{r(2^*_s-1)}.
\end{aligned}
\]
{F}rom the convergence of~$u_n$, we have that~$\|u_n+u\|_{L^r(K)} \leq \|u_n\|_{L^r(K)}+\|u\|_{L^r(K)}$
is bounded uniformly in~$n$, while
\[
\lim_{n\to +\infty}\|u_n-u\|_{L^r(K)}=0,
\]
therefore~\eqref{eqLrloc2} holds.
\end{proof}

With this convergence result, we can establish the following:

\begin{lem}
\label{wea}
Let~$\{u_n\}_n$ be a sequence in~$\mathcal{D}^{s,2}(\R^N)$ weakly
converging, as~$n\to+\infty$, to some~$u$ in~$\mathcal{D}^{s,2}(\R^N)$.

Suppose that
\begin{equation}\label{PS-ii00}
{\lim_{n\to +\infty}}\sup_{{v\in\mathcal{D}^{s,2}(\R^N)}\atop{[v]_s\le1}}\big|\langle f'_\varepsilon(u_n), v\rangle \big|= 0.
\end{equation}

Then, $u$ is a nonnegative weak solution of~\eqref{problema}.
\end{lem}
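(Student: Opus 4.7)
The plan is to show that for every fixed $v\in\mathcal{D}^{s,2}(\R^N)$ one has $\langle f'_\varepsilon(u_n),v\rangle\to\langle f'_\varepsilon(u),v\rangle$ as $n\to+\infty$; combined with~\eqref{PS-ii00} (applied to $v/[v]_s$ when $[v]_s>0$) this forces $\langle f'_\varepsilon(u),v\rangle=0$, so $u$ is a weak solution of~\eqref{problema} with $u_+$ on the right-hand side. In view of~\eqref{selanumero0}, three terms must be passed to the limit, and as a preliminary step I would extract a subsequence (still denoted $\{u_n\}$) such that $u_n\to u$ almost everywhere in $\R^N$ and strongly in $L^r_{loc}(\R^N)$ for every $r\in[1,2^*_s)$, using the Sobolev bound in~$L^{2^*_s}(\R^N)$ together with Rellich--Kondrachov on balls and a diagonal argument. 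Since the weak limit is unique, checking the critical-point property along the subsequence is enough.

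The bilinear term converges directly from the weak convergence $u_n\rightharpoonup u$ in $\mathcal{D}^{s,2}(\R^N)$, since its pairing with $v$ is a continuous linear functional. For the subcritical term I would split $\int h\,u_{n,+}^q v=\int_{B_R}+\int_{\R^N\setminus B_R}$: on $B_R$, Lemma~\ref{lemmmaconvergenzaLrloc} applied with $r=q+1$ gives $u_{n,+}^q\to u_+^q$ in $L^{(q+1)/q}(B_R)$, and a H\"older estimate with the conjugate exponent $q+1$, using $\|h\|_{L^\infty}<\infty$ and $v\in L^{q+1}(B_R)$ (valid since $q+1<2<2^*_s$), yields convergence; for the tail, a three-function H\"older inequality with exponents $a$, $2^*_s/q$, $2^*_s$, where $1/a:=1-(q+1)/2^*_s\in(0,1)$, bounds the integral uniformly in $n$ by $\|h\|_{L^a(\R^N\setminus B_R)}$ times a constant, which vanishes as $R\to+\infty$ because $h\in L^1(\R^N)\cap L^\infty(\R^N)\subset L^a(\R^N)$.

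The hard part will be the critical term $\int u_{n,+}^{2^*_s-1}v$, since the embedding $\mathcal{D}^{s,2}(\R^N)\hookrightarrow L^{2^*_s}(\R^N)$ is not compact and Lemma~\ref{lemmmaconvergenzaLrloc} does not reach $r=2^*_s$. My plan is to bypass local strong convergence of the nonlinearity and instead exploit duality: the sequence $u_{n,+}^{2^*_s-1}$ is bounded in the dual space $L^{2^*_s/(2^*_s-1)}(\R^N)$ (thanks to the $L^{2^*_s}$-bound) and converges a.e.\ to $u_+^{2^*_s-1}$ by the preliminary step. The standard fact that a.e.\ convergence together with $L^p$-boundedness for $p\in(1,+\infty)$ implies weak convergence in $L^p$ then yields $u_{n,+}^{2^*_s-1}\rightharpoonup u_+^{2^*_s-1}$ weakly in $L^{2^*_s/(2^*_s-1)}(\R^N)$, and pairing this weak limit with the fixed function $v\in L^{2^*_s}(\R^N)$ produces the sought convergence.

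Once $\langle f'_\varepsilon(u),v\rangle=0$ is established for every $v\in\mathcal{D}^{s,2}(\R^N)$, the nonnegativity of $u$ follows from Proposition~2.2.3 in~\cite{MR3617721}, already quoted after~\eqref{selanumero0}: choosing $v=u_-\in\mathcal{D}^{s,2}(\R^N)$, which is admissible because the negative-part operation is nonexpansive in the Gagliardo seminorm, annihilates both nonlinear integrals since $u_+\cdot u_-\equiv 0$, while the pointwise inequality $(u(x)-u(y))(u_-(x)-u_-(y))\le -|u_-(x)-u_-(y)|^2$ forces $[u_-]_s=0$ and hence $u_-\equiv 0$.
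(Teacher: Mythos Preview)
Your proof is correct, and it takes a genuinely different route from the paper's. The paper first restricts to test functions $v\in C^\infty_c(\R^N)$ with support in a ball $K$: this compact support lets them handle the critical term via Lemma~\ref{lemmmaconvergenzaLrloc} with some $r\in(2^*_s-1,2^*_s)$, obtaining strong convergence of $(u_n)_+^{2^*_s-1}$ in $L^{r/(2^*_s-1)}(K)$; the $h$-term is treated the same way. For the Gagliardo bilinear form they pass through weak $L^2(\R^{2N})$ convergence of the difference quotients and identify the limit by a.e.\ convergence and dominated convergence away from the diagonal. Only afterwards do they invoke a density argument to reach general $v\in\mathcal{D}^{s,2}(\R^N)$.

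Your argument is more streamlined on two fronts. First, you observe that the bilinear term is simply the $\mathcal{D}^{s,2}$ inner product $\langle u_n,v\rangle$, so its convergence is the very definition of weak convergence---no detour through $L^2(\R^{2N})$ is needed. Second, for the critical term you bypass the compact-support restriction entirely by noting that $(u_n)_+^{2^*_s-1}$ is bounded in $L^{2^*_s/(2^*_s-1)}(\R^N)$ and converges a.e., hence converges weakly in that reflexive space; pairing against $v\in L^{2^*_s}(\R^N)$ then gives the limit directly. This global duality argument makes the density step superfluous. The trade-off is that the paper's approach keeps everything self-contained via the elementary Lemma~\ref{lemmmaconvergenzaLrloc}, whereas yours invokes the (standard but slightly less elementary) fact that a.e.\ convergence plus $L^p$-boundedness implies weak $L^p$ convergence.
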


\begin{proof}
We point out that
\begin{equation}\label{covergenzadebole00}
u_n\rightarrow u \mbox{ in } L^r_{loc}({\R^N}) \;{\mbox{ for every~$r\in [1,2^*_s)$}}
\qquad \mbox{and}\qquad
u_n\to u \mbox{ a.e. in } \R^N.
\end{equation}

Now we let~$v\in C^\infty_c(\R^N)$ and
we use~\eqref{PS-ii00} to say that
\begin{equation}\label{servetutyepn090909}\begin{split}
0&=\lim_{n\to +\infty}\langle f'_\varepsilon(u_n), v \rangle \\
&=\lim_{n\to +\infty} \Bigg(
\iint_{\R^{2N}}\frac{(u_n(x)-u_n(y))(v(x)-v(y))}{|x-y|^{N+2s}}\,dx\,dy \\
&\qquad\qquad\qquad
-\varepsilon\int_{\R^N}h(x)(u_n)_+^{q}(x)v(x)\,dx
-\int_{\R^N}(u_n)_+^{2^*_s-1}(x)v(x)\,dx \Bigg).
\end{split}\end{equation}
Let also~$K$ be a ball containing supp$(v)$.
Thanks to the strong convergence in~\eqref{covergenzadebole00}, 
we can use the H\"older inequality and
Lemma~\ref{lemmmaconvergenzaLrloc} to see that,
for every~$r\in (2^*_s-1,2^*_s)$, 
\begin{equation}\label{AL:2}\begin{split}&
\lim_{n\to +\infty}\left|
\int_{\R^N}h(x)\Big((u_n)_+^{q}(x)-u_+^{q}(x)\Big)v(x)\,dx\right|
\\&\qquad\qquad\leq \lim_{n\to +\infty}C 
\left(\int_{K}|(u_n)_+^{q}(x)-u_+^{q}(x)|^\frac{r}{q}\,dx \right)^{\frac{q}{r}}=0\end{split}
\end{equation}
and
\begin{equation}\label{AL:3}\begin{split}&
\lim_{n\to +\infty}\left|\int_{\R^N}\Big((u_n)_+^{2^*_s-1}(x)-u_+^{2^*_s-1}(x)\Big)v(x)\,dx\right|\\&\qquad\qquad\le
\lim_{n\to +\infty} C\left(\int_{K}|(u_n)_+^{2^*_s-1}(x)-u_+^{2^*_s-1}(x)|^\frac{r}{2^*_s-1} \,dx\right)^{\frac{2^*_s-1}r}=0,
\end{split}\end{equation}
for some~$C>0$.

Moreover, setting
\[
\xi_n(x,y):=\frac{u_n(x)-u_n(y)}{|x-y|^{\frac{N}{2}+s}},
\] we see that
the sequence~$\{\xi_n\}_n$ is uniformly bounded in~$L^2(\R^{2N})$, and so 
there exists~$\xi \in L^2(\R^{2N})$ such that
\begin{equation}\label{diory867684576hgdsjg}
{\mbox{$\xi_n \rightharpoonup \xi$ weakly in~$L^2(\R^{2N})$ as~$n\to+\infty$.}}\end{equation}

Notice also that the function~$\frac{v(x)-v(y)}{|x-y|^{\frac{N}{2}+s}}$
belongs to~$L^2(\R^{2N})$.
As a consequence,
\begin{equation}\label{AL:1}
\lim_{n\to +\infty}\iint_{\R^{2N}}\xi_n(x,y)
\frac{v(x)-v(y)}{|x-y|^{\frac{N}{2}+s}}\,dx\,dy 
=\iint_{\R^{2N}}\xi(x,y)
\frac{v(x)-v(y)}{|x-y|^{\frac{N}{2}+s}}\,dx\,dy .
\end{equation}
Furthermore, since~$u_n\to u$ a.e. in~$\R^N$, we have that,
a.e. in~$\R^{2N}$,
\begin{equation}\label{sevvbhg5847y750945uhbuf}
\lim_{n\to +\infty}\xi_n(x,y) = \frac{u(x)-u(y)}{|x-y|^{\frac{N}{2}+s}}.
\end{equation}

Now we claim that
\begin{equation}\label{asdfghjkqwertyuio23456789}
\xi(x,y)=\frac{u(x)-u(y)}{|x-y|^{\frac{N}{2}+s}} \quad {\mbox{a.e. in~$\R^{2N}$}}.
\end{equation}
To prove this, let~$w\in C^\infty_c(\R^{2N})$ 
with 
\begin{equation}\label{deioctu45boi76iu5t4}
{\mbox{supp$(w)\cap \{(x,y)\in \R^{2N}\mbox{ s.t. } x= y \}=\varnothing$.}}\end{equation} 
For short, we let~$S:=$supp$(w)$.
By the strong convergence in~\eqref{covergenzadebole00}, we have that~$u_n(x)-u_n(y)$
converges in~$L^1(S)$, and therefore (see e.g. Theorem~4.9 in~\cite{MR2759829}) we have that~$|u_n(x)-u_n(y)|\le g(x,y)$, with~$g\in L^1(S)$.
This, \eqref{sevvbhg5847y750945uhbuf},
\eqref{deioctu45boi76iu5t4} and the Dominated Convergence Theorem
entail that
$$ \lim_{n\to +\infty}\iint_{\R^{2N}}\xi_n(x,y)\,w(x,y)\,dx\,dy=\iint_{\R^{2N}}\frac{u(x)-u(y)}{|x-y|^{\frac{N}{2}+s}}\,w(x,y)\,dx\,dy.
$$
{F}rom this and the weak convergence in~\eqref{diory867684576hgdsjg},
we obtain that
$$\iint_{\R^{2N}}\xi(x,y)\,w(x,y)\,dx\,dy=\iint_{\R^{2N}}\frac{u(x)-u(y)}{|x-y|^{\frac{N}{2}+s}}\,w(x,y)\,dx\,dy,$$
which gives the desired result in~\eqref{asdfghjkqwertyuio23456789}.

Thanks to~\eqref{AL:1} and~\eqref{asdfghjkqwertyuio23456789}
we conclude that
$$ \lim_{n\to +\infty}\iint_{\R^{2N}}
\frac{(u_n(x)-u_n(y))(v(x)-v(y))}{|x-y|^{N+2s}}\,dx\,dy 
=\iint_{\R^{2N}}
\frac{(u(x)-u(y))(v(x)-v(y))}{|x-y|^{N+2s}}\,dx\,dy .$$
Combining this information with~\eqref{servetutyepn090909}, \eqref{AL:2} and~\eqref{AL:3}, we gather that~$\langle f'_\varepsilon(u), v \rangle=0$.

Now, to complete the proof of Lemma~\ref{wea}, we perform a density argument.
Indeed, if~$v\in \mathcal{D}^{s,2}(\R^N)$, there exists a 
sequence~$\{v_n\}_n$ in~$C^\infty_c(\R^N)$ such that~$v_n$ converges to~$v$ strongly in~$\mathcal{D}^{s,2}(\R^N)$.
We have that
\begin{equation}\label{vbsbguro123456789}\langle f'_\varepsilon(u), v \rangle=
\lim_{n\to+\infty}\langle f'_\varepsilon(u), v \rangle
=\lim_{n\to+\infty}\langle f'_\varepsilon(u), v-v_n \rangle
+\langle f'_\varepsilon(u), v_n \rangle=
\lim_{n\to+\infty}\langle f'_\varepsilon(u), v-v_n \rangle.
\end{equation}
Also, we observe that, by the fractional Sobolev embedding and the H\"older inequality, 
\[
\begin{aligned}
&\langle f'_\varepsilon(u), v_n-v \rangle\\
&
\leq[u]_s[v_n-v]_{s}
-\varepsilon\int_{\R^N} h(x)u_+^q(x)(v_n(x)-v(x))\,dx
-\int_{\R^N}u_+^{2^*_s-1}(x)(v_n(x)-v(x))\,dx \\
&\leq [u]_s[v_n-v]_{s}+c_1\|v_n-v\|_{2^*_s}
+c_2\|v_n-v\|_{2^*_s} \\
&\leq [u]_s[v_n-v]_{s}+c_3[v_n-v]_{s},
\end{aligned}
\] 
for some~$c_1$, $c_2$, $c_3>0$. 

This entails that 
\[
\lim_{n\to +\infty}\langle f'_\varepsilon(u), v_n-v \rangle=0,
\]
which, together with~\eqref{vbsbguro123456789},
gives that~$\langle f'_\varepsilon(u), v \rangle=0$.

The sign of~$u$ has been commented on at the end of Section~\ref{NOTA}.
This completes the proof of Lemma~\ref{wea}, as desired.
\end{proof}

Our objective is now to prove that the functional~$f_\varepsilon$ 
defined in~\eqref{deffepsilon} satisfies the Palais-Smale 
condition below a suitable level. This goal will be accomplished in the forthcoming Proposition~\ref{PSfepsilon}.
To this end, we need to identify the appropriate energy level,
and this operation will rely on the following observation:

\begin{lem}\label{Ag78}
Let
$$r:=\frac{2^*_s}{2^*_s-q-1}.$$ Then, there exists~$C_\star>0$, depending only on~$N$, $s$, $q$
and~$\|h\|_{L^r(\R^N)}$, such that, for any~$a>0$,
\[
\frac{s}{N}a^{2^*_s}-\varepsilon\left(\frac{1}{q+1}-\frac{1}{2} \right)\|h\|_{L^r(\R^N)} \,a^{q+1}\geq -C_\star\,\varepsilon^r.
\]
\end{lem}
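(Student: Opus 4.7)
The plan is to treat the left-hand side as a one-variable function of $a > 0$ (with $\varepsilon$ fixed) and locate its global infimum. Setting
\[
g(a) := \frac{s}{N}\, a^{2^*_s} - \varepsilon\gamma\, a^{q+1}, \qquad \gamma := \left(\frac{1}{q+1} - \frac{1}{2}\right)\|h\|_{L^r(\R^N)},
\]
I first note that $\gamma > 0$ because $q\in(0,1)$ forces $q+1 < 2$. Since $2^*_s > q+1$, the positive leading term dominates at infinity, so $g(a) \to +\infty$ as $a\to+\infty$, while $g$ is negative on a right neighbourhood of $0$. Consequently $g$ attains its global minimum at some interior critical point $a_0 = a_0(\varepsilon) > 0$, and $g(a_0) < 0$.

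Next, I would compute $a_0$ explicitly. The equation $g'(a_0) = 0$ rearranges to
\[
a_0^{2^*_s - q - 1} = \frac{\varepsilon\gamma(q+1)N}{s\,2^*_s},
\]
so $a_0 = \kappa\,\varepsilon^{1/(2^*_s-q-1)}$ for a constant $\kappa > 0$ depending only on $N$, $s$, $q$ and $\|h\|_{L^r(\R^N)}$. Substituting this relation into $g(a_0)$ to eliminate $a_0^{2^*_s}$ yields
\[
g(a_0) = \varepsilon\gamma\, a_0^{q+1}\left(\frac{q+1}{2^*_s}-1\right) = -\,\frac{2^*_s-q-1}{2^*_s}\,\gamma\,\kappa^{q+1}\,\varepsilon^{1+(q+1)/(2^*_s-q-1)}.
\]
The exponent of $\varepsilon$ equals $1 + \frac{q+1}{2^*_s-q-1} = \frac{2^*_s}{2^*_s-q-1} = r$, so $g(a_0) = -C_\star\,\varepsilon^r$ with $C_\star > 0$ depending only on the declared data. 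Since $g(a) \ge g(a_0)$ for every $a > 0$, this is the asserted bound.

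There is no real obstacle here: this is elementary one-variable calculus, and the only things to watch are the sign of $\gamma$ (handled by $q < 1$, which is where the hypothesis $q\in(0,1)$ is genuinely used) and the exponent bookkeeping, which is designed to collapse to $\varepsilon^r$ precisely because $r$ is defined as $2^*_s/(2^*_s-q-1)$. Assumption $(h_0)$, via interpolation between $L^1$ and $L^\infty$, ensures that $\|h\|_{L^r(\R^N)}$ is finite, though the lemma statement uses this norm only to prescribe the dependence of $C_\star$.
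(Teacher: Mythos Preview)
Your proof is correct and follows essentially the same approach as the paper: both arguments bound the one-variable function $a\mapsto \frac{s}{N}a^{2^*_s}-\varepsilon\gamma\,a^{q+1}$ from below and track the $\varepsilon$-dependence of its infimum. The only stylistic difference is that the paper factors out the $\varepsilon$-dependence via the scaling substitution $x:=\varepsilon^{-r/2^*_s}a$ (reducing to a fixed supremum independent of $\varepsilon$), whereas you compute the critical point and minimum value explicitly; both routes yield the same exponent bookkeeping and the same $C_\star\varepsilon^r$.
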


\begin{proof} Since~$2^*_s>q+1$, given~$\alpha$, $\beta>0$, we have that
$$ c(\alpha,\beta,s,q):=\sup_{x>0}\big( \beta x^{q+1}-\alpha x^{2^*_s}\big)<+\infty.$$
In particular, choosing~$x:=\varepsilon^{-\frac{r}{2^*_s}}a$,
\begin{eqnarray*}
\varepsilon^r c(\alpha,\beta,s,q)\ge \beta \varepsilon^r x^{q+1}-\alpha \varepsilon^r x^{2^*_s}
=\beta \varepsilon^{r-\frac{r(q+1)}{2^*_s}}a^{q+1}-\alpha a^{2^*_s}=\beta \varepsilon a^{q+1}-\alpha a^{2^*_s},
\end{eqnarray*}
hence the desired result follows by choosing
\begin{equation*}C_\star\ge
c\left(\frac{s}N,\left(\frac{1}{q+1}-\frac{1}{2} \right)\|h\|_{L^r(\R^N)},s,q\right).\qedhere\end{equation*}
\end{proof} 

The notation in Lemma~\ref{Ag78} will be implicitly assumed from now on.

\begin{prop}\label{PSfepsilon}
Suppose that~$(h_0)$ holds true and let~$\{u_n\}_n \subset \mathcal{D}^{s,2}(\R^N)$ be such that
\begin{equation}\label{PS-i}
\displaystyle{\lim_{n\to +\infty}}f_\varepsilon(u_n)=c_\varepsilon<\frac{s}{N}S^\frac{N}{2s}
-C_\star\varepsilon^r\end{equation}
and
\begin{equation}\label{PS-ii}
{\lim_{n\to +\infty}}\sup_{{v\in\mathcal{D}^{s,2}(\R^N)}\atop{[v]_s\le1}}\big|\langle f'_\varepsilon(u_n), v\rangle\big|= 0.
\end{equation}

Then, up to a subsequence, $\{u_n\}_n$ strongly converges in~$\mathcal{D}^{s,2}(\R^N)$ as~$n\to+\infty$ to a nonnegative function~$u$ that is a solution of~\eqref{problema}.
\end{prop}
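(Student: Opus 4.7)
The plan is to follow the classical Palais-Smale scheme for critical problems, adapted to the nonlocal global setting, in three stages: boundedness/weak limit, Brezis-Lieb-type decomposition, and a quantitative energy dichotomy that exploits the level assumption~\eqref{PS-i}.

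\textbf{Boundedness and weak limit.} First I would combine~\eqref{PS-i} and~\eqref{PS-ii} through the identity
\[
f_\varepsilon(u_n) - \frac{1}{2^*_s}\langle f'_\varepsilon(u_n), u_n\rangle = \frac{s}{N}[u_n]_s^2 - \varepsilon\left(\frac{1}{q+1}-\frac{1}{2^*_s}\right)\int_{\R^N} h(x)(u_n)_+^{q+1}\,dx.
\]
Testing~\eqref{PS-ii} against $v=u_n/[u_n]_s$, bounding the sublinear integral by H\"older with exponents $r$ and $2^*_s/(q+1)$, and inserting the Sobolev inequality~\eqref{definizioneS}, I obtain an inequality of the form $[u_n]_s^2 \le C + C\varepsilon[u_n]_s^{q+1} + o([u_n]_s)$. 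Since $q+1<2$, this gives boundedness of $\{u_n\}_n$ in $\mathcal{D}^{s,2}(\R^N)$. Passing to a subsequence, $u_n\rightharpoonup u$ weakly in $\mathcal{D}^{s,2}(\R^N)$ and a.e.\ in $\R^N$; by Lemma~\ref{wea}, $u$ is a nonnegative weak solution of~\eqref{problema}, and in particular $\langle f'_\varepsilon(u), u\rangle = 0$.

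\textbf{Decomposition.} Setting $v_n:=u_n-u$, weak convergence in the Hilbert space $\mathcal{D}^{s,2}(\R^N)$ gives the orthogonality relation $[u_n]_s^2 = [u]_s^2 + [v_n]_s^2 + o(1)$, and the Brezis-Lieb lemma applied to the a.e.\ convergent $(u_n)_+$ gives
\[
\int_{\R^N}(u_n)_+^{2^*_s}\,dx = \int_{\R^N}u_+^{2^*_s}\,dx + \int_{\R^N}(v_n)_+^{2^*_s}\,dx + o(1).
\]
For the sublinear term I would show that $\int_{\R^N} h(x)(u_n)_+^{q+1}\,dx \to \int_{\R^N} h(x)u_+^{q+1}\,dx$. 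On any ball $B_R$ this follows from the local strong convergence provided by Lemma~\ref{wea} (and Lemma~\ref{lemmmaconvergenzaLrloc}), while the tail is controlled by
\[
\int_{\R^N\setminus B_R} h(x)(u_n)_+^{q+1}\,dx \le \|h\|_{L^r(\R^N\setminus B_R)}\,\|(u_n)_+\|_{L^{2^*_s}(\R^N)}^{q+1},
\]
which vanishes as $R\to+\infty$ because $h\in L^1\cap L^\infty \subset L^r(\R^N)$ by $(h_0)$ and $[u_n]_s$ is bounded.

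\textbf{Conclusion via the level bound.} Plugging the decompositions into the equation $\langle f'_\varepsilon(u_n),u_n\rangle\to 0$ and subtracting $\langle f'_\varepsilon(u),u\rangle = 0$, I obtain
\[
[v_n]_s^2 - \int_{\R^N}(v_n)_+^{2^*_s}\,dx \to 0.
\]
Let $\ell := \lim [v_n]_s^2 = \lim \int(v_n)_+^{2^*_s}\,dx$. The Sobolev inequality~\eqref{definizioneS} forces $\ell = 0$ or $\ell \ge S^{N/(2s)}$. Inserting the decompositions in $f_\varepsilon(u_n)\to c_\varepsilon$ gives $c_\varepsilon = f_\varepsilon(u) + \frac{s}{N}\ell$. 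Using the equation satisfied by $u$ to rewrite
\[
f_\varepsilon(u) = \frac{s}{N}\int_{\R^N} u_+^{2^*_s}\,dx - \varepsilon\left(\frac{1}{q+1}-\frac{1}{2}\right)\int_{\R^N} h(x)u_+^{q+1}\,dx,
\]
bounding the sublinear part by H\"older, and then applying Lemma~\ref{Ag78} with $a := \|u_+\|_{L^{2^*_s}(\R^N)}$, I obtain $f_\varepsilon(u) \ge -C_\star \varepsilon^r$. If $\ell > 0$, this together with $\ell \ge S^{N/(2s)}$ forces $c_\varepsilon \ge \frac{s}{N}S^{N/(2s)} - C_\star\varepsilon^r$, contradicting~\eqref{PS-i}. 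Hence $\ell = 0$, giving strong convergence $u_n \to u$ in $\mathcal{D}^{s,2}(\R^N)$.

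The main obstacle is the classical loss of compactness due to the critical exponent: possible concentration along a Talenti-type profile costs at least $\frac{s}{N}S^{N/(2s)}$ of energy. This is precisely ruled out by the sharp quantitative ceiling in~\eqref{PS-i}, paired with the energy lower bound $f_\varepsilon(u) \geq -C_\star\varepsilon^r$ for the weak limit supplied by Lemma~\ref{Ag78}. A secondary technical issue, specific to the global setting in $\R^N$, is the tail control of the weighted sublinear integral in Step~2, where assumption $(h_0)$ enters crucially.
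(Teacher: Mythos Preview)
Your approach is correct and genuinely different from the paper's. The paper proves Proposition~\ref{PSfepsilon} via the full concentration--compactness machinery of~\cite{MR3866572}: it extracts defect measures~$\mu,\nu$ with atoms~$\mu_i,\nu_i$ and masses at infinity~$\mu_\infty,\nu_\infty$, then tests~\eqref{PS-ii} against localised functions~$\phi_\delta u_n$ and~$\phi_R u_n$ (invoking Lemmata~\ref{lemmalimiteDphidelta}--\ref{lemmalimiteDphiR} to kill the cross terms) to show that each nonzero atom satisfies~$\nu_i\ge S^{N/(2s)}$, and finally uses the level bound~\eqref{PS-i} together with Lemma~\ref{Ag78} to force all atoms to vanish. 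You instead run the classical Brezis--Lieb route: orthogonality in~$\mathcal D^{s,2}$ for the seminorm, Brezis--Lieb for the critical term, direct convergence of the sublinear term, and a single Sobolev dichotomy on the defect~$\ell$. Your argument is shorter and avoids the dyadic estimates of Section~\ref{PREL} entirely; the paper's route, on the other hand, is more robust and reuses the same template for the harder translated functional in Proposition~\ref{PSIepsilon}, where positive parts interact with the shift~$u_\varepsilon$ in a way that makes a clean Brezis--Lieb splitting less immediate.

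One point of care: the Brezis--Lieb lemma applied to the a.e.\ convergent sequence~$(u_n)_+\to u_+=u$ yields
\[
\int_{\R^N}(u_n)_+^{2^*_s}\,dx=\int_{\R^N}u^{2^*_s}\,dx+\int_{\R^N}\big|(u_n)_+-u\big|^{2^*_s}\,dx+o(1),
\]
not the version with~$(v_n)_+=(u_n-u)_+$ that you wrote. This does not break the argument: setting~$w_n:=(u_n)_+-u$, weak convergence gives~$[(u_n)_+]_s^2=[u]_s^2+[w_n]_s^2+o(1)$ and~$[(u_n)_+]_s\le[u_n]_s$, hence~$\limsup[w_n]_s^2\le\ell:=\lim[v_n]_s^2$; the Sobolev inequality applied to~$w_n$ then still delivers~$S\ell^{2/2^*_s}\le\ell$, and the rest of your dichotomy and energy computation goes through unchanged.
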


\begin{proof}
Let~$\lambda \in (2,2^*_s)$. 
By the H\"older inequality with exponents~$r$ and~$\frac{2^*_s}{q+1}$,
\begin{eqnarray*}
&&\lambda f_\varepsilon(u_n)-\langle f'_\varepsilon(u_n),u_n\rangle \\
&&\quad= \left(\frac{\lambda}{2}-1 \right)[u_n]_s^2
-\varepsilon \left(\frac{\lambda}{q+1}-1 \right)\int_{\R^N}h(x)(u_n)_+^{q+1}\,dx -\left(\frac\lambda{2^*_s}-1\right)
\int_{\R^N}(u_n)_+^{2^*_s}(x)\,dx 
\\
&&\quad\geq \left(\frac{\lambda}{2}-1 \right)[u_n]_s^2
-\varepsilon \left(\frac{\lambda}{q+1}-1 \right)\int_{\R^N}h(x)(u_n)_+^{q+1}(x)\,dx \\
&&\quad\geq \left(\frac{\lambda}{2}-1 \right)[u_n]_s^2
-\varepsilon \left(\frac{\lambda}{q+1}-1 \right)\|h\|_{L^r(\R^N)}\|u_n\|_{L^{2^*_s}(\R^N)}^{q+1} \\
&&\quad\geq \left(\frac{\lambda}{2}-1 \right)[u_n]_s^2
-C\varepsilon \left(\frac{\lambda}{q+1}-1 \right)\|h\|_{L^r(\R^N)}[u_n]_s^{q+1},
\end{eqnarray*}
for some~$C>0$ depending on~$N$ and~$s$, as given by the
fractional Sobolev embedding (see e.g. Theorem~6.5
in~\cite{MR2944369}).

This, \eqref{PS-i} and~\eqref{PS-ii} yield that~$\{u_n\}_n$ is bounded in~$\mathcal{D}^{s,2}(\R^N)$, and therefore so is~$\{(u_n)_+\}_n$, because,
for all~$x$, $y\in\R^N$,
\begin{equation}\label{d4yt4fhjkagf5674839jijhgthuikio}
|(u_n)_+(x)-(u_n)_+(y)|\le |u_n(x)-u_n(y)|.\end{equation}

Therefore, up to a subsequence,
\begin{equation}\label{covergenzadebole}\begin{split}
&u_n \rightharpoonup u \mbox{ weakly in }\mathcal{D}^{s,2}(\R^N),
\\ &u_n\rightarrow u \mbox{ in } L^r_{loc}({\R^N}) \;{\mbox{ for every~$r\in [1,2^*_s)$,}}
\\&
u_n\to u \mbox{ a.e. in } \R^N,\\
&(u_n)_+ \rightharpoonup \widetilde u \mbox{ weakly in }\mathcal{D}^{s,2}(\R^N),
\\ &(u_n)_+\rightarrow \widetilde u \mbox{ in } L^r_{loc}({\R^N}) \;{\mbox{ for every~$r\in [1,2^*_s)$}}
\\ \mbox{and}\qquad&
(u_n)_+\to \widetilde u \mbox{ a.e. in } \R^N.
\end{split}\end{equation}
We point out that, for a.e.~$x\in\R^N$,
\begin{eqnarray*}&&
\widetilde u(x)=\lim_{n\to+\infty} (u_n)_+(x)
=\lim_{n\to+\infty} \max\{u_n(x),0\}
\\&&\qquad= \max\left\{\lim_{n\to+\infty} u_n(x),0\right\}=\max\{u(x),0\}=u_+(x).
\end{eqnarray*}

We can now apply Theorem~1.1 in~\cite{MR3866572} to find that 
there exist two bounded measures~$\mu$ and~$\nu$, an at most 
countable set of indices~$I$, and positive real numbers~$\mu_i$ and~$\nu_i$, with~$i\in I$, such that the following convergence 
holds weakly* in the sense of measures:
\begin{align}&
|D^s (u_n)_+|^2\,dx \rightharpoonup \mu\geq |D^s u_+|^2\,dx
+\sum_{i\in I}\mu_i \delta_{x_i} \label{CCmui}\\
{\mbox{and }}\qquad&|(u_n)_+|^{2^*_s}\,dx \rightharpoonup \nu= |u_+|^{2^*_s}\,dx
+\sum_{i\in I}\nu_i \delta_{x_i}. \label{CCnui} 
\end{align} Moreover, for all~$ i \in I$,
\begin{equation}\label{CCSi}
S^\frac{1}{2}\nu_i^\frac{1}{2^*_s}\leq \mu_i^\frac{1}{2},
\end{equation}
where~$S$ is defined in~\eqref{definizioneS}.

Furthermore, Theorem 1.1 in~\cite{MR3866572} also gives that, defining
\begin{align}\label{defmuinfinito}
&\mu_\infty:=\lim_{R\to +\infty}\limsup_{n\to +\infty}
\int_{\R^N\setminus B_R}|D^s(u_n)_+(x)|^2\,dx
\\{\mbox{and }}\qquad&
\label{defnuinfinito}
\nu_\infty:=\lim_{R\to +\infty}\limsup_{n\to +\infty}
\int_{\R^N\setminus B_R}|(u_n)_+(x)|^{2^*_s}\,dx,
\end{align}
we have that
\begin{align}\label{CCmuinfinito}
&\limsup_{n\to +\infty}
\int_{\R^N}|D^s(u_n)_+(x)|^2\,dx=\mu(\R^N)+\mu_\infty \\
{\mbox{and }}\qquad&
\label{CCnuinfinito}
\limsup_{n\to +\infty}
\int_{\R^N}|(u_n)_+(x)|^{2^*_s}\,dx=\nu(\R^N)+\nu_\infty,\\
{\mbox{with }} \qquad &
\label{CCSinfinito}
S^\frac{1}{2}\nu_\infty^\frac{1}{2^*_s}\leq \mu_\infty^\frac{1}{2}.
\end{align}

Now we claim that
\begin{equation}\label{claimaggpergh75}
{\mbox{if~$\mu_i\not=0$ for some~$i\in I$, then~$\nu_i\ge S^\frac{N}{2s}$.}}
\end{equation}
To establish this claim, let~$\delta>0$ and, recalling~\eqref{definizionephi}, consider~$\phi_\delta:=\phi_{\delta,x_i}$. Notice that~$\phi_\delta u_n
\in \mathcal{D}^{s,2}(\R^N)$, and therefore, in light of~\eqref{PS-ii},
we have that
\begin{equation}\label{scontrofdelta}
\begin{aligned}
0&=\lim_{n\to +\infty}\langle f_\varepsilon'(u_n),\phi_\delta u_n
\rangle \\
&=\lim_{n\to +\infty}\Bigg( \int_{\R^N}\phi_\delta(x)|D^su_n(x)|^2\,dx
\\&\qquad\qquad\qquad+\iint_{\R^{2N}}u_n(y)\frac{(u_n(x)-u_n(y))(\phi_\delta(x)-\phi_\delta(y))}{|x-y|^{N+2s}}\,dx\,dy \\
&\qquad\qquad\qquad-\varepsilon \int_{\R^N} h(x)\phi_\delta(u_n)_+^{q+1}(x)\,dx
-\int_{\R^N} \phi_\delta(x)(u_n)_+^{2^*_s}(x)\,dx \Bigg).
\end{aligned}
\end{equation}

Moreover, it follows from~\eqref{CCmui} and~\eqref{CCnui} that
\begin{align}\label{limitephideltaDu}
&\lim_{n\to +\infty} \int_{\R^N}\phi_\delta(x)|D^s(u_n)_+(x)|^2 \,dx
=\int_{\R^N} \phi_\delta\,d\mu,
\\ {\mbox{and }}\qquad&
\label{limitephideltau}
\lim_{n\to +\infty} \int_{\R^N} \phi_\delta(x)|(u_n)_+(x)|^{2^*_s}\,dx
=\int_{\R^N} \phi_\delta\,d\nu.
\end{align}
Also, since supp$(\phi_\delta)$ is bounded, 
the convergence in~\eqref{covergenzadebole} gives that,
up to a subsequence,
\begin{equation}\label{limitehphideltau}
\lim_{n\to +\infty}\int_{\R^N} h(x)\phi_\delta(x)(u_n)_+^{q+1}(x)\,dx
=\int_{B_\delta(x_i)} h(x)\phi_\delta(x) u_+^{q+1}(x)\,dx.
\end{equation}

In addition, by the H\"older inequality,
\begin{eqnarray*}\left|
\iint_{\R^{2N}}u_n(y)\frac{(u_n(x)-u_n(y))(\phi_\delta(x)-\phi_\delta(y))}{|x-y|^{N+2s}}\,dx\,dy \right|&\leq& [u_n]_s
\left(\int_{\R^N}|u_n|^2|D^s\phi_\delta|^2\,dx \right)^\frac{1}{2}\\
&\leq& M \left(\int_{\R^N}|u_n|^2|D^s\phi_\delta|^2\,dx \right)^\frac{1}{2},
\end{eqnarray*}
for some~$M>0$.

Hence, using Lemma~\ref{lemma2.4} (with~$w:=|D^s\phi_\delta|^2$,
which satisfies the assumption in~\eqref{fhewuoguo46859432},
thanks to Lemma~\ref{corollary2.3}), we conclude that
\begin{equation}\label{deltato0}\begin{split}&
\limsup_{n\to +\infty}\left|\iint_{\R^{2N}}u_n(y)\frac{(u_n(x)-u_n(y))(\phi_\delta(x)-\phi_\delta(y))}{|x-y|^{N+2s}}\,dx\,dy\right|
\\&\qquad\qquad\le M \left(\int_{\R^N}|u(x)|^2|D^s\phi_\delta(x)|^2\,dx \right)^\frac{1}{2}.\end{split}\end{equation}
Besides, from Lemma~\ref{lemmalimiteDphidelta},
$$ \lim_{\delta\to 0} \int_{\R^N}|u(x)|^2|D^s\phi_\delta(x)|^2\,dx=0,$$
which, combined with~\eqref{deltato0}, returns that
$$ \lim_{\delta\to 0}\lim_{n\to +\infty}\iint_{\R^{2N}}u_n(y)\frac{(u_n(x)-u_n(y))(\phi_\delta(x)-\phi_\delta(y))}{|x-y|^{N+2s}}\,dx\,dy=0.$$

{F}rom this, \eqref{scontrofdelta}, \eqref{limitephideltaDu},
\eqref{limitephideltau} and~\eqref{limitehphideltau}
(and recalling also~\eqref{d4yt4fhjkagf5674839jijhgthuikio}),
it follows that
\begin{eqnarray*}
0&=&\lim_{\delta\to 0}\lim_{n\to  +\infty}
\langle f_\varepsilon'(u_n),\phi_\delta u_n\rangle
\\ &\ge&\lim_{\delta\to 0}\left( \int_{\R^N}\phi_\delta\,d\mu
-\varepsilon \int_{B_\delta(x_i)} h(x)\phi_\delta(x) u_+^{q+1}(x)\,dx
-\int_{\R^N} \phi_\delta\,d\nu \right)\\
&\ge&\lim_{\delta\to 0}\left( \int_{\R^N}\phi_\delta(x)|D^su_+(x)|^2\,dx
+\sum_{j\in I}\mu_j \phi_\delta(x_j)
-\int_{\R^N} \phi_\delta(x)u_+^{2^*_s}(x)\,dx-\sum_{j\in I}\nu_j \phi_\delta(x_j) \right)
\\&=&\mu_i-\nu_i.
\end{eqnarray*}
This gives that~$\nu_i\ge\mu_i$.

Since~$\mu_i\neq0$ (recall the assumption in~\eqref{claimaggpergh75}),
this implies that~$\nu_i\neq0$ and that, by~\eqref{CCSi},
$$ S^{\frac12}\nu_i^{\frac1{2^*_s}}\le \mu_i^{\frac12}\le\nu_i^{\frac12},$$ leading to~$\nu_i\ge S^\frac{N}{2s}$,
which completes the proof of~\eqref{claimaggpergh75}.

Similarly, we claim that
\begin{equation}\label{claimaggpergh75BIS}
{\mbox{if $\mu_\infty\neq0$, then~$\nu_\infty \geq S^\frac{N}{2s}$.}}
\end{equation}
To this end, we let~$R>0$ and,
recalling~\eqref{definizionephi}, set~$\phi_R:=1-\phi_{R,0}$.
Notice that~$\phi_R u_n\in \mathcal{D}^{s,2}(\R^N)$, and therefore, in light of~\eqref{PS-ii},
\begin{equation}\label{scontrofR}
\begin{aligned}
0&=\lim_{n\to +\infty}\langle f_\varepsilon'(u_n),\phi_R u_n
\rangle \\
&=\lim_{n\to +\infty}\Bigg( \int_{\R^N}\phi_R(x)|D^su_n(x)|^2\,dx
\\&\qquad\qquad\qquad+\iint_{\R^{2N}}u_n(y)\frac{(u_n(x)-u_n(y))(\phi_R(x)-\phi_R(y))}{|x-y|^{N+2s}}\,dx\,dy \\
&\qquad\qquad\qquad-\varepsilon \int_{\R^N} h(x)\phi_R(x)(u_n)_+^{q+1}(x)\,dx
-\int_{\R^N} \phi_R(x)(u_n)_+^{2^*_s}(x)\,dx \Bigg).
\end{aligned}
\end{equation}

Recalling that~$\{u_n\}_n$ is bounded in~$\mathcal{D}^{s,2}(\R^N)$ and using the fractional Sobolev embedding and the H\"older inequality,
\begin{eqnarray*}
&&\left|\int_{\R^N} h(x)\phi_R(x)(u_n)_+^{q+1}(x)\,dx\right|
\le \int_{\R^N\setminus B_{R/2}}|h(x)|\, |u_n(x)|^{q+1}\,dx
\\&&\qquad\leq \|u_n\|_{L^{2^*_s}(\R^N)}^{q+1}
\left(\int_{\R^N\setminus B_R}|h(x)|^{\frac{2^*_s}{2^*_s-q-1}}\,dx
\right)^{\frac{2^*_s-q-1}{2^*_s}} \\
&&\qquad \leq C[u_n]_s^{q+1} \left(\int_{\R^N\setminus B_{R/2}}|h(x)|^{\frac{2^*_s}{2^*_s-q-1}}\,dx
\right)^{\frac{2^*_s-q-1}{2^*_s}}\\
&&\qquad \leq C\left(\int_{\R^N\setminus B_{R/2}}|h(x)|^{\frac{2^*_s}{2^*_s-q-1}}\,dx
\right)^{\frac{2^*_s-q-1}{2^*_s}},
\end{eqnarray*}
up to renaming~$C>0$ from line to line, and therefore
\begin{equation}\label{limitehphiRu}
\lim_{R\to +\infty} \lim_{n\to +\infty}\int_{\R^N}h(x)\phi_R(x) (u_n)_+^{q+1}(x)\,dx=0.
\end{equation}
 
Furthermore, by the H\"older inequality,
\begin{eqnarray*}&&\left|
\iint_{\R^{2N}}u_n(y)\frac{(u_n(x)-u_n(y))(\phi_R(x)-\phi_R(y))}{|x-y|^{N+2s}}\,dx\,dy\right|\\&&\qquad\qquad
\leq [u_n]_s
\left(\int_{\R^N}|u_n(x)|^2|D^s\phi_R(x)|^2\,dx \right)^\frac{1}{2}\\
&&\qquad\qquad\leq M \left(\int_{\R^N}|u_n(x)|^2|D^s\phi_R(x)|^2\,dx \right)^\frac{1}{2} 
\end{eqnarray*}
for some~$M>0$.
Hence, by virtue of Lemma~\ref{lemma2.4},
\begin{equation*}\begin{split}& \limsup_{n\to+\infty}\left|\iint_{\R^{2N}}u_n(y)\frac{(u_n(x)-u_n(y))(\phi_R(x)-\phi_R(y))}{|x-y|^{N+2s}}\,dx\,dy\right|\\&\qquad\qquad
\le M \left(\int_{\R^N}|u(x)|^2|D^s\phi_R(x)|^2\,dx \right)^\frac{1}{2}.
\end{split}\end{equation*}
Since from Lemma~\ref{lemmalimiteDphiR} we know that
\begin{equation*}
\lim_{R\to +\infty}\int_{\R^N}|u(x)|^2|D^s\phi_R(x)|^2\,dx=0,
\end{equation*}
we conclude that
\begin{equation}\label{mnbvcxzsdfghjkoiuytr}
\lim_{R\to +\infty} \lim_{n\to+\infty}\iint_{\R^{2N}}u_n(y)\frac{(u_n(x)-u_n(y))(\phi_R(x)-\phi_R(y))}{|x-y|^{N+2s}}\,dx\,dy=0.\end{equation}

Additionally, given two sequences~$\{a_n\}_n$ and~$\{b_n\}_n$, we know\footnote{To check~\eqref{funcdiseg33672547}, we notice that
\begin{eqnarray*}
&&\limsup_{n\to+\infty}a_n=\limsup_{n\to+\infty}\Big( (a_n-b_n)+b_n\Big)
\le \limsup_{n\to+\infty}(a_n-b_n) + \limsup_{n\to+\infty}b_n.
\end{eqnarray*}
} that
\begin{equation}\label{funcdiseg33672547}
\limsup_{n\to+\infty}\big(a_n-b_n\big)\ge \limsup_{n\to+\infty}a_n-
\limsup_{n\to+\infty}b_n.\end{equation}

Therefore, in view of this inequality,
used here with
$$ a_n:=\int_{\R^N}\phi_R(x)|D^s(u_n)_+(x)|^2\,dx\qquad
{\mbox{and}}\qquad \int_{\R^N}\phi_R(x) (u_n)_+^{2^*_s}(x)\,dx,$$
and recalling~\eqref{defmuinfinito}, \eqref{defnuinfinito}, \eqref{scontrofR}, \eqref{limitehphiRu} and~\eqref{mnbvcxzsdfghjkoiuytr},
we see that
\[
\begin{aligned}
0&=\lim_{R\to +\infty} \limsup_{n\to +\infty}\left(\int_{\R^N}\phi_R(x)|D^su_n(x)|^2\,dx- \int_{\R^N}\phi_R(x) (u_n)_+^{2^*_s}(x)\,dx\right) \\
&\ge\lim_{R\to +\infty} \limsup_{n\to +\infty}\left( \int_{\R^N}\phi_R(x)|D^s(u_n)_+(x)|^2\,dx
-\int_{\R^N} \phi_R(x)(u_n)_+^{2^*_s}(x)\,dx \right)\\
&\ge\lim_{R\to +\infty} \left( \limsup_{n\to +\infty}\int_{\R^N}\phi_R(x)|D^s(u_n)_+(x)|^2\,dx
-\limsup_{n\to +\infty}\int_{\R^N} \phi_R(x)(u_n)_+^{2^*_s}(x)\,dx \right)\\
&=\mu_\infty- \nu_\infty,
\end{aligned}
\]
which implies that~$\mu_\infty\le\nu_\infty$.

Hence, if~$\mu_\infty\neq0$, then~$\nu_\infty\neq0$ as well.
Accordingly, by~\eqref{CCSinfinito},
$$
S^\frac{1}{2}\nu_\infty^\frac{1}{2^*_s}\leq \mu_\infty^\frac{1}{2}\le\nu_\infty^{\frac12},$$
and thus~$\nu_\infty\ge S^{\frac{N}{2s}}$,
which establishes the desired claim in~\eqref{claimaggpergh75BIS}.

Our next observation is that
\begin{equation}\label{BIS480BIS}
{\mbox{$\nu_i=0$, for all~$i\in I$, and~$\nu_\infty=0$.}}
\end{equation}
To prove this, we use the assumptions in~\eqref{PS-i} and~\eqref{PS-ii} to see that
\begin{equation}\label{aggpercheserve342748}\begin{split}
c_\varepsilon
&=\lim_{n\to +\infty}\left(f_\varepsilon(u_n)
-\frac{1}{2}\langle f_\varepsilon'(u_n), u_n \rangle\right) \\
&=\lim_{n\to +\infty}\Bigg[\left(\frac{1}{2}\int_{\R^{N}}|D^s u_n(x)|^2
\,dx
-\frac{\varepsilon}{q+1}\int_{\R^N}h(x)(u_n)_+^{q+1}(x)\,dx
-\frac{1}{2^*_s}\int_{\R^N}(u_n)_+^{2^*_s}(x)\,dx
\right)\\
&\qquad\qquad-\frac12\left(\int_{\R^{N}}|D^s u_n(x)|^2\,dx
-\varepsilon \int_{\R^N}h(x)(u_n)_+^{q+1}(x)\,dx
-\int_{\R^N}(u_n)_+^{2^*_s}(x)\,dx\right)\Bigg]\\
&=\lim_{n\to +\infty}\Bigg[\varepsilon\left(\frac12
-\frac{1}{q+1}\right)\int_{\R^N}h(x)(u_n)_+^{q+1}(x)\,dx
+\left(\frac12
-\frac{1}{2^*_s}\right)\int_{\R^N}(u_n)_+^{2^*_s}(x)\,dx\Bigg]
.
\end{split}\end{equation}

Moreover, we claim that
\begin{equation}\label{limconvergenzadebole}
\lim_{n\to +\infty}\int_{\R^N} h(x)(u_n)_+^{q+1}(x)\,dx
=\int_{\R^N} h(x)u_+^{q+1}(x)\,dx.
\end{equation}
To this end, let~$R>0$ and notice that
\begin{eqnarray*}
\lim_{n\to +\infty}\int_{B_R} h(x)(u_n)_+^{q+1}(x)\,dx
=\int_{B_R} h(x)u_+^{q+1}(x)\,dx,
\end{eqnarray*}
thanks to the strong convergence in~\eqref{covergenzadebole}.

Also, by the H\"older inequality with exponents~$\frac{2^*_s}{2^*_s-q-1}$ and~$\frac{2^*_s}{q+1}$,
\begin{eqnarray*}\left|
\int_{\R^N\setminus B_R} h(x)(u_n)_+^{q+1}(x)\,dx\right|&\le&
\left( \int_{\R^N\setminus B_R} |h(x)|^{\frac{2^*_s}{2^*_s-q-1}}\,dx\right)^{\frac{2^*_s-q-1}{2^*_s}}
\left(\int_{\R^N\setminus B_R} (u_n)_+^{2^*_s}(x)\,dx\right)^{\frac{q+1}{2^*_s}}\\&\le& C \left( \int_{\R^N\setminus B_R} |h(x)|^{\frac{2^*_s}{2^*_s-q-1}}\,dx\right)^{\frac{2^*_s-q-1}{2^*_s}},
\end{eqnarray*}
yielding that
$$ \lim_{R\to +\infty}
\lim_{n\to +\infty}\int_{\R^N\setminus B_R} h(x)(u_n)_+^{q+1}(x)\,dx=0.$$
As a result,
\begin{eqnarray*}&&
\lim_{n\to +\infty}\int_{\R^N} h(x)(u_n)_+^{q+1}(x)\,dx=
\lim_{R\to +\infty}\lim_{n\to +\infty}\int_{\R^N} h(x)(u_n)_+^{q+1}(x)\,dx\\
&&\qquad=
\lim_{R\to +\infty}\lim_{n\to +\infty}\left(\int_{B_R} h(x)(u_n)_+^{q+1}(x)\,dx
+\int_{\R^N\setminus B_R} h(x)(u_n)_+^{q+1}(x)\,dx\right)
\\
&&\qquad= \lim_{R\to+\infty} \int_{B_R} h(x)u_+^{q+1}(x)\,dx\\
&&\qquad = \int_{\R^N} h(x)u_+^{q+1}(x)\,dx,
\end{eqnarray*}
which is~\eqref{limconvergenzadebole}.

Using~\eqref{limconvergenzadebole} into~\eqref{aggpercheserve342748},
we obtain that
\begin{equation}\label{aggpercheserve342748BIS}\begin{split}
c_\varepsilon
&=\varepsilon\left(\frac12
-\frac{1}{q+1}\right)\int_{\R^N}h(x)u_+^{q+1}(x)\,dx
+\left(\frac12
-\frac{1}{2^*_s}\right)\lim_{n\to +\infty}\int_{\R^N}(u_n)_+^{2^*_s}(x)\,dx.\end{split}\end{equation}
In addition, by~\eqref{CCnui} and~\eqref{CCnuinfinito}, we see that
\begin{equation*}\begin{split}
&\limsup_{n\to +\infty}\int_{\R^N} (u_n)_+^{2^*_s}(x)\,dx
=\nu\left(\R^N \right)+\nu_\infty =
\int_{\R^N} u_+(x)^{2^*_s}\,dx+\sum_{i\in I}\nu_i+\nu_\infty.
\end{split}\end{equation*}
By plugging this into~\eqref{aggpercheserve342748BIS},
we conclude that
\begin{equation}\label{chepallefefew4rt74}\begin{split}
c_\varepsilon
&=\varepsilon\left(\frac12
-\frac{1}{q+1}\right)\int_{\R^N}h(x)u_+^{q+1}(x)\,dx
+\left(\frac12-\frac{1}{2^*_s}\right)
\left(\int_{\R^N} u_+(x)^{2^*_s}\,dx+\sum_{i\in I}\nu_i+\nu_\infty\right)
.\end{split}\end{equation}

Moreover, by the H\"older inequality with exponents~$\frac{2^*_s}{q+1}$
and~$\frac{2^*_s}{2^*_s-q-1}$, we get that
\begin{eqnarray*} \int_{\R^N}h(x)u_+^{q+1}(x)\,dx&\le&
\left(\int_{\R^N}|h(x)|^{\frac{2^*_s}{2^*_s-q-1}}\,dx\right)^{\frac{2^*_s-q-1}{2^*_s}}\left(\int_{\R^N}u_+^{2^*_s}(x)\,dx\right)^{\frac{q+1}{2^*_s}}
\\&=&\|h\|_{L^{\frac{2^*_s}{2^*_s-q-1}}(\R^N)}\|u\|_{L^{2^*_s}(\R^N)}^{q+1}.\end{eqnarray*}
This and~\eqref{chepallefefew4rt74} entail that
\begin{equation*} \begin{split}
c_\varepsilon
&\ge \varepsilon\left(\frac12
-\frac{1}{q+1}\right)\|h\|_{L^{\frac{2^*_s}{2^*_s-q-1}}(\R^N)}\|u\|_{L^{2^*_s}(\R^N)}^{q+1}
+\left(\frac12-\frac{1}{2^*_s}\right)
\left(\int_{\R^N} u_+(x)^{2^*_s}\,dx+\sum_{i\in I}\nu_i+\nu_\infty\right)\\
&=\varepsilon\left(\frac12
-\frac{1}{q+1}\right)\|h\|_{L^{\frac{2^*_s}{2^*_s-q-1}}(\R^N)}\|u\|_{L^{2^*_s}(\R^N)}^{q+1}
+\frac{s}{N}\|u\|_{L^{2^*_s}(\R^N)}^{2^*_s}
+\frac{s}{N}
\left(\sum_{i\in I}\nu_i+\nu_\infty\right)
.\end{split}\end{equation*}
In this way, we can employ Lemma~\ref{Ag78},
with~$a:=\|u\|_{L^{2^*_s}(\R^N)}$, obtaining that
\begin{equation}\label{fwgb5476i76u5}
c_\varepsilon\ge\frac{s}{N}
\left(\sum_{i\in I}\nu_i+\nu_\infty\right)
-C_\star\varepsilon^r.
\end{equation}

Let now~$\kappa_1\in\N\cup\{+\infty\}$ be the number of indices~$i\in I$ such that~$\nu_i\neq0$ and
$$ \kappa_2:=\begin{cases}
1 \quad{\mbox{ if }} \nu_\infty\neq0,\\
0 \quad{\mbox{ if }} \nu_\infty=0.
\end{cases}$$
We observe that, thanks to~\eqref{CCSi} and~\eqref{claimaggpergh75},
\begin{eqnarray*}
\sum_{i\in I}\nu_i=\sum_{{i\in I}\atop{\nu_i\neq0}}\nu_i
\ge \sum_{{i\in I}\atop{\nu_i\neq0}}S^{\frac{N}{2s}}=\kappa_1 S^{\frac{N}{2s}}.
\end{eqnarray*}
Also, using~\eqref{CCSinfinito} and~\eqref{claimaggpergh75BIS},
$$\nu_\infty =\kappa_2\nu_\infty\ge \kappa_2
S^{\frac{N}{2s}}.$$
{F}rom these observations and~\eqref{fwgb5476i76u5}, we conclude that
$$ c_\varepsilon\ge\frac{s}{N}(\kappa_1+\kappa_2)S^{\frac{N}{2s}}
-C_\star\varepsilon^r.$$
Combining this with~\eqref{PS-i}, we conclude that
$$\frac{S^\frac{N}{2s}\,s}N-C_\star\varepsilon^r
>\frac{s}{N}(\kappa_1+\kappa_2)S^{\frac{N}{2s}}-C_\star\varepsilon^r$$
and therefore
$$ \kappa_1+\kappa_2<1.
$$
This gives that~$\kappa_1=\kappa_2=0$, which
completes the proof of the claim in~\eqref{BIS480BIS}.

In light of~\eqref{CCnui} and~\eqref{BIS480BIS}, we conclude that
\begin{equation}\label{forsesperiamo}
\lim_{n\to +\infty}\int_{\R^N}(u_n)_+^{2^*_s}(x)\,dx
=\int_{\R^N}u_+^{2^*_s}(x)\,dx.
\end{equation}

Now, thanks to~\eqref{PS-ii} and Lemma~\ref{wea}, recalling also
the convergence in~\eqref{limconvergenzadebole} and~\eqref{forsesperiamo}, we have that
\[
\begin{aligned}
0&=\lim_{n\to +\infty}\langle f_\varepsilon'(u_n), u_n \rangle
-\langle f_\varepsilon'(u), u \rangle \\
&=\lim_{n\to +\infty}\left([u_n]_s
-\varepsilon\int_{\R^N}h(x)(u_n)_+^{q+1}(x)
-\int_{\R^N}(u_n)_+^{2^*_s}(x)\,dx\right) \\
&\qquad -[u]_s
+\varepsilon\int_{\R^N}h(x)u_+^{q+1}(x)
+\int_{\R^N}u_+^{2^*_s}(x)\,dx \\
&=\lim_{n\to +\infty}[u_n]_s-[u]_s
\end{aligned},
\]
that is
\[
\lim_{n\to +\infty}[u_n]_s=[u]_s.
\]
This and the uniform convexity of~$\mathcal{D}^{s,2}(\R^N)$
give that~$u_n$ converges strongly to~$u$ in~$\mathcal{D}^{s,2}(\R^N)$, as~$n\to+\infty$, as desired.
\end{proof}

We can now prove the existence of a nonnegative solution~$u_\varepsilon$ of~\eqref{problema} such that~$u_\varepsilon \to 0$ as~$\varepsilon \to 0$.

\begin{prop}\label{prop:primasol}
Let $(h_0)$ and $(h_1)$ hold true.

Then, there exists~$\varepsilon_0>0$ such that, if~$\varepsilon\in(0,\varepsilon_0)$,
the functional~$f_\varepsilon$ achieves a nonnegative local minimum~$u_\varepsilon\not\equiv 0$. 

Moreover, $u_\varepsilon\in L^\infty(\R^N)\cap C^\alpha(\R^N)$, for any~$\alpha\in(0,\min\{2s,1\})$, and~$u_\varepsilon\to 0$ 
in~$\mathcal{D}^{s,2}(\R^N)$ as~$\varepsilon \to 0$.

Also, if~$h\ge0$ in some open set~$\Omega\subseteq\R^N$, then~$u_\varepsilon(x)>0$ for all~$x\in\Omega$.

In particular,
if~$h\ge0$, then~$u_\varepsilon(x)>0$ for all~$x\in\R^N$.
\end{prop}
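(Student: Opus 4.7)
The strategy is to produce $u_\varepsilon$ as a constrained minimum of $f_\varepsilon$ on a small ball of $\mathcal{D}^{s,2}(\R^N)$, and then upgrade the information via Proposition~\ref{PSfepsilon}. First I combine the H\"older inequality (with the exponents $r=\frac{2^*_s}{2^*_s-q-1}$ and $\frac{2^*_s}{q+1}$) and the fractional Sobolev embedding to obtain
$$
f_\varepsilon(u)\ge \tfrac{1}{2}[u]_s^2-C_1\,\varepsilon\,\|h\|_{L^r(\R^N)}\,[u]_s^{q+1}-C_2\,[u]_s^{2^*_s}
=:\Phi_\varepsilon([u]_s),
$$
where $C_1$, $C_2>0$ depend only on $N$, $s$, $q$. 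Since $q+1<2<2^*_s$, I can fix a small radius $\rho_0>0$ (depending only on $C_2$) for which $\Phi_0(\rho_0)\ge \rho_0^2/4$, and then pick $\varepsilon_1>0$ so that $\Phi_\varepsilon(\rho_0)\ge\rho_0^2/8$ for all $\varepsilon\in(0,\varepsilon_1)$. Exploiting $(h_1)$, I select a nonnegative $\psi\in C^\infty_c(B)$ with $\int h\psi^{q+1}\,dx>0$; since $q+1<2$, the function $t\mapsto f_\varepsilon(t\psi)$ becomes strictly negative for $t>0$ small. Hence
$$
m_\varepsilon:=\inf\left\{f_\varepsilon(u)\,:\,u\in\mathcal{D}^{s,2}(\R^N),\ [u]_s\le\rho_0\right\}<0,
$$
and this infimum cannot be attained on the sphere $\{[u]_s=\rho_0\}$, where $f_\varepsilon\ge\rho_0^2/8>0$.

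Next, I apply Ekeland's variational principle on the complete metric space $\overline{B_{\rho_0}}:=\{[u]_s\le\rho_0\}$ to produce a minimizing sequence $\{u_n\}_n$ with $f_\varepsilon(u_n)\to m_\varepsilon$ for which condition~\eqref{PS-ii} of Proposition~\ref{PSfepsilon} holds: the strict separation between $m_\varepsilon<0$ and the sphere value ensures that the Ekeland sequence lies in the open ball for $n$ large, so that the constrained estimate upgrades to a free Palais--Smale estimate. After shrinking $\varepsilon_0\in(0,\varepsilon_1)$ once more, the inequality
$$m_\varepsilon<0<\tfrac{s}{N}S^{N/(2s)}-C_\star\varepsilon^r$$
holds for all $\varepsilon\in(0,\varepsilon_0)$, so Proposition~\ref{PSfepsilon} yields a subsequence converging strongly in $\mathcal{D}^{s,2}(\R^N)$ to a nonnegative $u_\varepsilon$ with $f'_\varepsilon(u_\varepsilon)=0$ and $f_\varepsilon(u_\varepsilon)=m_\varepsilon<0$. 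In particular $u_\varepsilon\not\equiv 0$ and $[u_\varepsilon]_s<\rho_0$, so $u_\varepsilon$ is an interior local minimum of $f_\varepsilon$ and solves~\eqref{problema}.

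The $L^\infty\cap C^\alpha$ regularity follows by standard bootstrap for fractional critical equations (Moser/De Giorgi iteration starting from $u_\varepsilon\in L^{2^*_s}(\R^N)$ and $h\in L^\infty(\R^N)$ yields $u_\varepsilon\in L^\infty(\R^N)$; then Schauder-type regularity for $(-\Delta)^s$ with bounded right-hand side gives $u_\varepsilon\in C^\alpha(\R^N)$ for every $\alpha\in(0,\min\{2s,1\})$). For the strict positivity, if $h\ge 0$ on an open set $\Omega$, then $(-\Delta)^s u_\varepsilon\ge u_\varepsilon^{2^*_s-1}\ge 0$ in $\Omega$; should $u_\varepsilon(x_0)=0$ at some $x_0\in\Omega$, the pointwise singular integral representation forces $(-\Delta)^s u_\varepsilon(x_0)=-\int_{\R^N}\frac{u_\varepsilon(y)}{|x_0-y|^{N+2s}}\,dy\le 0$, whence $u_\varepsilon\equiv 0$ a.e. in $\R^N$, contradicting $u_\varepsilon\not\equiv 0$. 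Finally, testing $f'_\varepsilon(u_\varepsilon)=0$ against $u_\varepsilon$ itself and combining with $f_\varepsilon(u_\varepsilon)=m_\varepsilon$ gives
$$\tfrac{s}{N}[u_\varepsilon]_s^2=m_\varepsilon+\varepsilon\left(\tfrac{1}{q+1}-\tfrac{1}{2^*_s}\right)\int_{\R^N}h(x)u_\varepsilon^{q+1}(x)\,dx;$$
since $m_\varepsilon\to 0$ (as is immediate from the definition of $\Phi_\varepsilon$ and the fact that $\Phi_0\ge 0$ on $[0,\rho_0]$) and the second summand is $O(\varepsilon)$ (because $[u_\varepsilon]_s\le\rho_0$), we conclude $[u_\varepsilon]_s\to 0$. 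The main obstacle in this scheme is ensuring that the Ekeland-type minimizing sequence does not lose compactness below the concentration--compactness threshold, which is precisely the content of Proposition~\ref{PSfepsilon}; the remaining steps are calibrations of $\rho_0$ versus $\varepsilon$ and appeals to standard regularity and maximum-principle technology.
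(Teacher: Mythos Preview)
Your proof is correct and follows essentially the same strategy as the paper (lower bound on $f_\varepsilon$ via Sobolev, negativity of the constrained infimum via $(h_1)$, Ekeland's principle together with Proposition~\ref{PSfepsilon}, then regularity and the strong maximum principle). The only noticeable variation is that the paper chooses the radius $\rho=\rho(\varepsilon)$ to be the first positive zero of $t\mapsto\frac12 t^2-\varepsilon c_1 t^{q+1}-c_2 t^{2^*_s}$, so that $[u_\varepsilon]_s\le\rho(\varepsilon)\to0$ is immediate, whereas you fix $\rho_0$ independent of $\varepsilon$ and recover $u_\varepsilon\to0$ from the energy identity $\frac{s}{N}[u_\varepsilon]_s^2=m_\varepsilon+\varepsilon\big(\frac{1}{q+1}-\frac{1}{2^*_s}\big)\int h\,u_\varepsilon^{q+1}$.
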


\begin{proof}
{F}rom the H\"older inequality and the fractional Sobolev embedding, we have that
\begin{eqnarray*}&&
\left|\int_{\R^N}h(x)u_+^{q+1}(x)\,dx\right|
\leq C\|u_+\|_{L^{2^*_s}(\R^N)}^{q+1}
\leq c_1 [u]_{s}^{q+1}
\\{\mbox{and }}&&
\int_{\R^N}u_+^{2^*_s}(x)\,dx
\leq c_2 [u]_s^{2^*_s},
\end{eqnarray*}
for some~$C$, $c_1$, $c_2>0$, and therefore, up to relabeling the constants~$c_1$ and~$c_2$,
\begin{equation}\label{Zo}
f_\varepsilon(u)\geq 
\frac{1}{2}[u]_s^2-\varepsilon c_1[u]_s^{q+1}
-c_2 [u]_s^{2^*_s}.
\end{equation}

Let now~$\rho=\rho(\varepsilon)$ be the first zero of the function
\[ (0,+\infty)\ni t\longmapsto
\frac{1}{2}t^2-\varepsilon c_1t^{q+1}
-c_2 t^{2^*_s}.
\]
We notice that~$\rho(\varepsilon) \to 0$ as~$\varepsilon \to 0$.

Furthermore, by~\eqref{Zo}, there exists~$c_0>0$ such that 
\begin{equation}\label{c_0e_0}\begin{split}
&{\mbox{if $ [u]_s<\rho$, then $
f_\varepsilon(u)\geq -c_0$}}\\
&{\mbox{and, if $ [u]_s=\rho$, then $
f_\varepsilon(u)\geq 0 $.}}\end{split}
\end{equation}

Now, let~$\phi\in C^\infty_c(\R^N)$ be a nonnegative function 
such that~$[\phi]_{s}=1$ and
supp$(\phi)\subset \{h>0 \}$. For any~$t>0$,
we have that 
\begin{eqnarray*}
f_\varepsilon(t\phi)&=&\frac{1}{2}t^2
-\frac{\varepsilon}{q+1}t^{q+1}\int_{\R^N}h(x)\phi^{q+1}(x)\,dx
-\frac{1}{2^*_s}t^{2^*_s}\int_{\R^N}\phi^{2^*_s}(x)\,dx\\
&\le&\frac{1}{2}t^2
-\frac{\varepsilon}{q+1}t^{q+1}\inf_B h\int_{B} \phi^{q+1}(x)\,dx
-\frac{1}{2^*_s}t^{2^*_s}\int_{\R^N}\phi^{2^*_s}(x)\,dx
.
\end{eqnarray*}
Hence, exploiting the assumption in~$(h_1)$,
since~$q+1<2<2^*_s$, there exists~$t_0<\rho$ 
such that, for any~$t\in(0,t_0)$, we have that~$
f_\varepsilon(t\phi)<0$.
On this account, from~\eqref{c_0e_0} it follows that
\[
-\infty< i_\varepsilon:=\inf_{{\mathcal{D}^{s,2}(\R^N)}\atop{[u]_s< \rho}}
f_\varepsilon(u)<0.
\]
Accordingly, we can take~$\varepsilon>0$ sufficiently small such that~$\frac{s}{N}S^\frac{N}{2s}-C_\star\varepsilon^r>0$, and thus
$$i_\varepsilon<
\frac{s}{N}S^\frac{N}{2s}-C_\star\varepsilon^r.$$

In this way, we see that if~$u_n$ is a minimizing sequence for~$i_\varepsilon$
then, for~$\varepsilon$ sufficiently small,
\begin{eqnarray*}
&&\lim_{n\to+\infty} f_\varepsilon(u_n)=i_\varepsilon<
\frac{s}{N}S^\frac{N}{2s}-C_\star\varepsilon^r\\{\mbox{and }}&&
{\lim_{n\to +\infty}}\sup_{{v\in\mathcal{D}^{s,2}(\R^N)}\atop{[v]_s\le1}}\langle f'_\varepsilon(u_n), v\rangle= 0,
\end{eqnarray*}
thanks also to the Ekeland's variational principle in Corollary~2.3 of~\cite{MR346619}.

This says that we are in the position of applying
Proposition~\ref{PSfepsilon} with~$c_\varepsilon:=i_\varepsilon$, deducing that, up to a subsequence, $\{u_n\}_n$ strongly converges in~$\mathcal{D}^{s,2}(\R^N)$ as~$n\to+\infty$ to a nonnegative function~$u_\varepsilon$ such that~$f_\varepsilon(u_\varepsilon)=i_\varepsilon$.

By construction, $u_\varepsilon$ is a local minimizer 
for~$f_\varepsilon$ and does not vanish identically. 

Furthermore,
\[
\lim_{\varepsilon \to 0}[u_\varepsilon]_{s}\leq 
\lim_{\varepsilon \to 0}\rho(\varepsilon)=0,
\]as desired.

We also remark that, from Proposition~5.1.1 in~\cite{MR3617721},
solutions of~\eqref{problema} are in~$ L^\infty(\R^N)$.
Also, Corollary~5.1.3 in~\cite{MR3617721} gives that~$u_\varepsilon\in C^\alpha(\R^N)$, for any~$\alpha\in(0,\min\{2s,1\})$.

Finally, we observe that if~$h\geq 0$ in~$\Omega$, then~$(-\Delta)^s u_\varepsilon\ge0$ in~$\Omega$ in the weak sense, and therefore 
the strong maximum principle for continuous weak solutions in
Proposition~5.2.1 in~\cite{MR3617721} yields that the solution~$u_\varepsilon$ is strictly positive in~$\Omega$.
\end{proof}

With this, we have constructed the first solution claimed in Theorem~\ref{thduesoluzioni}.
In the next section, we will establish the existence of a second solution.

\section{Existence of a second solution}\label{PRO2}

In this section we prove that~\eqref{problema} admits a second
solution~$\widetilde{u}_\varepsilon$. More precisely, we show that
if~$f_\varepsilon$ has a local minimum, 
then there exists a second nonnegative solution of~\eqref{problema}.

For this, from now on, we will suppose that~$\varepsilon\in(0,\varepsilon_0)$,
where~$\varepsilon_0$ is given by Proposition~\ref{prop:primasol}.
Without loss of generality, and without further notice, we will
possibly take~$\varepsilon$ even smaller whenever convenient, up to renaming~$\varepsilon_0$.

Given the nonnegative local minimum~$u_\varepsilon$ of~$f_\varepsilon$ given by Proposition~\ref{prop:primasol}, we set
\begin{equation}\label{degtfte093654}\begin{split}
g(x,t)&:
=\varepsilon h(x)\Big( (u_\varepsilon+t_+)^q-u_\varepsilon^q\Big)
+\Big((u_\varepsilon+t_+)^{2^*_s-1}-u_\varepsilon^{2^*_s-1}\Big) \\&=
\begin{cases}
\varepsilon h(x)\Big( (u_\varepsilon+t)^q-u_\varepsilon^q\Big)
+\Big((u_\varepsilon+t)^{2^*_s-1}-u_\varepsilon^{2^*_s-1}\Big) & t\geq 0,
\\
0      &t<0,
\end{cases}
\\
{\mbox{and }}\quad
G(x,v)&:=\int_0^v g(x,t)\,dt\\&=
\frac{\varepsilon}{q+1}h(x)
\Big((u_\varepsilon(x)+v_+(x))^{q+1}-u_\varepsilon^{q+1}(x)
-(q+1)u_\varepsilon^q(x) v_+(x)\Big) \\
&\qquad+\frac{1}{2^*_s}\Big((u_\varepsilon(x)+v_+(x))^{2^*_s}-u_\varepsilon^{2^*_s}(x)
-2^*_s u_\varepsilon^{2^*_s-1}(x) v_+(x)\Big)
\end{split}\end{equation}
and we define the translated functional
$I_\varepsilon:\mathcal{D}^{s,2}(\R^N) \to \R$ as
\begin{equation}\label{definizioneIepsilon}\begin{split}
I_\varepsilon(v)&:=\frac{1}{2}\int_{\R^N}|D^s v(x)|^2\,dx
-\int_{\R^N}G(x,v(x))\,dx\\&=\frac{1}{2}\int_{\R^N}|D^s v(x)|^2\,dx\\&\qquad-
\frac{\varepsilon}{q+1}\int_{\R^N}h(x)
\Big((u_\varepsilon(x)+v_+(x))^{q+1}-u_\varepsilon^{q+1}(x)
-(q+1)u_\varepsilon^q(x) v_+(x)\Big)\,dx
\\&\qquad-\frac{1}{2^*_s}\int_{\R^N}\Big((u_\varepsilon(x)+v_+(x))^{2^*_s}-u_\varepsilon^{2^*_s}(x)
-2^*_s u_\varepsilon^{2^*_s-1}(x) v_+(x)\Big)\,dx
.\end{split}
\end{equation}

We point out that, for every~$w\in\mathcal{D}^{s,2}(\R^N)$,
\begin{equation}\label{selanumero}\begin{split}&
\langle I'_\varepsilon(v), w\rangle \\&=
\iint_{\R^{2N}}\frac{(v(x)-v(y))(w(x)-w(y))}{|x-y|^{N+2s}}\,dx\,dy
-\int_{\R^N}g(x,v(x))w(x)\,dx\\
&=\iint_{\R^{2N}}\frac{(v(x)-v(y))(w(x)-w(y))}{|x-y|^{N+2s}}\,dx\,dy
\\&\qquad-\int_{\R^N}\Big[ \varepsilon h(x)\Big( (u_\varepsilon+v_+(x))^q-u_\varepsilon^q\Big)
+\Big((u_\varepsilon+v_+(x))^{2^*_s-1}-u_\varepsilon^{2^*_s-1}\Big) \Big]w(x)\,dx.
\end{split}\end{equation}
That is, critical points of~$I_\varepsilon$ are weak solutions of
\begin{equation}\label{nk dcvr9SLDM}
(-\Delta)^s v(x)=\varepsilon h(x)\Big( (u_\varepsilon(x)+ v_+(x))^q-u_\varepsilon^q(x)\Big)
+\Big((u_\varepsilon(x)+v_+(x))^{2^*_s-1}-u_\varepsilon^{2^*_s-1}(x)\Big).
\end{equation}

This can be made more precise, according to the next observation:

\begin{lem}\label{NUOVLEM}
If~$v$ is  a critical point of~$I_\varepsilon$, then~$v\ge0$ and it is a weak solution of
\begin{equation*}
(-\Delta)^s v(x)=\varepsilon h(x)\Big( (u_\varepsilon(x)+ v(x))^q-u_\varepsilon^q(x)\Big)
+\Big((u_\varepsilon(x)+v(x))^{2^*_s-1}-u_\varepsilon^{2^*_s-1}(x)\Big).
\end{equation*}
Also, the function~$u:=u_\varepsilon+v$ is a weak solution of~\eqref{problema} and thus a critical point of~$f_\varepsilon$.
\end{lem}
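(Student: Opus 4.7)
The plan is to extract the claim in three steps: (i) read off the Euler-Lagrange identity satisfied by $v$ from \eqref{selanumero}, (ii) test with the negative part of $v$ to force $v\ge0$, and (iii) add the identity for $v$ to the one satisfied by $u_\varepsilon$ to obtain the equation for $u=u_\varepsilon+v$. The key structural observation is that the nonlinearity $g(x,t)$ in \eqref{degtfte093654} was designed so that $g(x,v(x))=0$ whenever $v(x)\le 0$: indeed on $\{v<0\}$ one has $v_+=0$ and both bracketed differences in $g$ vanish.

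First I would take $w=-v_-$ as a test function in the critical point condition $\langle I'_\varepsilon(v),w\rangle=0$. The right-hand side of \eqref{selanumero} becomes $-\int g(x,v(x))\,v_-(x)\,dx$, and by the observation above the integrand vanishes pointwise, so the right-hand side is zero. For the left-hand side, writing $v=v_+-v_-$, I would expand
\[
-(v(x)-v(y))(v_-(x)-v_-(y)) = (v_-(x)-v_-(y))^2 + v_+(x)v_-(y)+v_+(y)v_-(x),
\]
using that $v_+v_-\equiv 0$. Integrated against the positive kernel $|x-y|^{-N-2s}$, this is bounded below by $[v_-]_s^2\ge 0$. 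Hence $[v_-]_s=0$, which combined with $v_-\in L^{2^*_s}(\R^N)$ forces $v_-\equiv 0$, i.e. $v\ge 0$.

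Once $v\ge 0$ is known, $v_+$ in \eqref{selanumero} may be replaced by $v$, so the identity
\[
\iint_{\R^{2N}}\frac{(v(x)-v(y))(w(x)-w(y))}{|x-y|^{N+2s}}\,dx\,dy=\int_{\R^N}\!\Big[\varepsilon h(x)\big((u_\varepsilon+v)^q-u_\varepsilon^q\big)+\big((u_\varepsilon+v)^{2^*_s-1}-u_\varepsilon^{2^*_s-1}\big)\Big]w\,dx
\]
holds for every $w\in\mathcal{D}^{s,2}(\R^N)$; this is exactly the weak formulation of the PDE in the statement. Finally, I would recall that $u_\varepsilon$ is a critical point of $f_\varepsilon$, so by \eqref{selanumero0} (using $u_\varepsilon\ge 0$ from Proposition~\ref{prop:primasol}, hence $(u_\varepsilon)_+=u_\varepsilon$) it satisfies the weak identity
\[
\iint_{\R^{2N}}\frac{(u_\varepsilon(x)-u_\varepsilon(y))(w(x)-w(y))}{|x-y|^{N+2s}}\,dx\,dy=\int_{\R^N}\Big[\varepsilon h(x)u_\varepsilon^q+u_\varepsilon^{2^*_s-1}\Big]w\,dx.
\]
Adding this to the identity for $v$ and setting $u:=u_\varepsilon+v\ge 0$, the subtracted terms cancel and one obtains $\langle f'_\varepsilon(u),w\rangle=0$ for every $w\in\mathcal{D}^{s,2}(\R^N)$, which says both that $u$ is a weak solution of \eqref{problema} and that it is a critical point of $f_\varepsilon$.

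The only delicate point is step (ii): making sure that the testing $w=-v_-$ is admissible (it is, since $v_-\in\mathcal{D}^{s,2}(\R^N)$ by \eqref{d4yt4fhjkagf5674839jijhgthuikio}) and that the algebraic identity used to bound the Gagliardo quadratic form from below really yields $[v_-]_s^2$ after dropping the nonnegative cross term. Once this positivity argument is in place, everything else is an essentially algebraic manipulation of the weak formulations.
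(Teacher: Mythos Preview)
Your proof is correct and follows essentially the same approach as the paper's: test the Euler--Lagrange identity against the negative part of $v$, use that $g(x,v)v_-\equiv 0$, and exploit the algebraic inequality $(v(x)-v(y))(v_-(x)-v_-(y))\le -(v_-(x)-v_-(y))^2$ (in your sign convention) to force $[v_-]_s=0$. The only cosmetic difference is that you test with $w=-v_-$ and give the explicit expansion of the cross term, whereas the paper tests with $w=v_-$ and states the resulting inequality directly; your step (iii) also spells out the addition of the two weak formulations, which the paper leaves as a one-line remark.
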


\begin{proof} We note that, for all~$t\in\R$ and~$x\in\R^n$, we have that~$g(x,t)t_-=0$.
On this account, we choose~$w:=v_-$ as a test function in~\eqref{selanumero} and we find that,
if~$v$ is  a critical point of~$I_\varepsilon$,
\begin{eqnarray*}
0&=&
\iint_{\R^{2N}}\frac{(v(x)-v(y))(v_-(x)-v_-(y))}{|x-y|^{N+2s}}\,dx\,dy
-\int_{\R^N}g(x,v(x))v_-(x)\,dx
\\&=&\iint_{\R^{2N}}\frac{(v(x)-v(y))(v_-(x)-v_-(y))}{|x-y|^{N+2s}}\,dx\,dy\\&\ge&\iint_{\R^{2N}}\frac{(v_-(x)-v_-(y))^2}{|x-y|^{N+2s}}\,dx\,dy,
\end{eqnarray*}yielding that~$v_-$ vanishes identically, hence~$v\ge0$.

The claims about the equations satisfied by~$v$ and~$u$ are now a direct consequence of
the nonnegativity of~$v$ and~$u_\varepsilon$ and~\eqref{nk dcvr9SLDM}.
\end{proof}

In light of Lemma~\ref{NUOVLEM}, from now on we will focus our attention
on the translated functional~$I_\varepsilon$, showing that it satisfies the
hypotheses of the Mountain Pass Theorem under a suitable threshold, provided
that~$v=0$ is the only critical point of~$I_\varepsilon$. This will allow us
to perform a contradiction argument and show the existence of a second
nontrivial critical point (strictly speaking, the second critical point is therefore
not necessarily of mountain pass type, but it is obtained through the application of the Mountain Pass Theorem).

We start our analysis with the following result highlighting the connection between the functional~$f_\varepsilon$ defined in~\eqref{deffepsilon} and~$I_\varepsilon$.

\begin{prop}\label{propvminimolocale}
Let~$u_\varepsilon$ be the nonnegative
local minimum of~$f_\varepsilon$ in~$\mathcal{D}^{s,2}(\R^N)$
given by Proposition~\ref{prop:primasol}. 

Then, $v=0$ is a local minimum of~$I_\varepsilon$ in~$\mathcal{D}^{s,2}(\R^N)$.
\end{prop}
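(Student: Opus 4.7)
The plan is to exploit the fact that $I_\varepsilon$ is essentially a ``shifted'' version of $f_\varepsilon$: small perturbations of the trivial competitor $v=0$ for $I_\varepsilon$ should correspond to small perturbations of the local minimiser $u_\varepsilon$ for $f_\varepsilon$. There are two steps.

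The first step is a reduction to nonnegative $v$. Looking at~\eqref{definizioneIepsilon}, the functional $I_\varepsilon$ depends on $v$ only through $[v]_s^2$ and $v_+$; since $(v_+)_+=v_+$, one gets
\[
I_\varepsilon(v)-I_\varepsilon(v_+) = \tfrac{1}{2}\bigl([v]_s^2 - [v_+]_s^2\bigr),
\]
which is nonnegative by the pointwise inequality $|v_+(x)-v_+(y)|\leq|v(x)-v(y)|$ recorded in~\eqref{d4yt4fhjkagf5674839jijhgthuikio}. In the classical case~\cite{MR1801341} one would instead use the identity $\nabla v_+ = \chi_{\{v>0\}}\nabla v$; this is where the fractional ``carefulness'' flagged in the introduction is called for, but the pointwise bound supplies a clean substitute.

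The second and key step is to observe that for $w\geq 0$, the translated functional coincides \emph{exactly} with the increment of $f_\varepsilon$ about $u_\varepsilon$. Indeed, when $w\geq 0$ one has $w_+=w$ and $(u_\varepsilon+w)_+=u_\varepsilon+w$, so the polynomial terms in $I_\varepsilon(w)$ and in $f_\varepsilon(u_\varepsilon+w)-f_\varepsilon(u_\varepsilon)$ are literally the same. Expanding the Gagliardo seminorm yields
\[
\tfrac{1}{2}[u_\varepsilon+w]_s^2 - \tfrac{1}{2}[u_\varepsilon]_s^2 = \tfrac{1}{2}[w]_s^2 + \iint_{\R^{2N}}\frac{(u_\varepsilon(x)-u_\varepsilon(y))(w(x)-w(y))}{|x-y|^{N+2s}}\,dx\,dy,
\]
and, since $u_\varepsilon$ solves~\eqref{problema} by Proposition~\ref{prop:primasol}, the cross term equals $\varepsilon\int_{\R^N} h u_\varepsilon^q w + \int_{\R^N} u_\varepsilon^{2^*_s-1} w$ thanks to~\eqref{selanumero0}. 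Plugging this back and collecting terms produces the identity
\[
I_\varepsilon(w) = f_\varepsilon(u_\varepsilon+w) - f_\varepsilon(u_\varepsilon) \qquad\text{for every } w\in\mathcal{D}^{s,2}(\R^N) \text{ with } w\geq 0.
\]

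Combining the two steps, the conclusion is immediate. Proposition~\ref{prop:primasol} furnishes $\rho>0$ such that $f_\varepsilon(u_\varepsilon+w)\geq f_\varepsilon(u_\varepsilon)$ whenever $[w]_s<\rho$. Applying this to $w:=v_+$, which satisfies $[v_+]_s\leq[v]_s<\rho$, gives $I_\varepsilon(v_+)\geq 0$; the reduction then yields
\[
I_\varepsilon(v) \geq I_\varepsilon(v_+) \geq 0 = I_\varepsilon(0) \qquad\text{for all } v \text{ with } [v]_s<\rho.
\]
The main obstacle is conceptual rather than computational: one must dispose of the sign of $v$ without appealing to the local chain rule for $v_+$. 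Once this is done via the fractional inequality $[v_+]_s\leq[v]_s$, the remaining algebra is routine.
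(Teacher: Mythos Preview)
Your proof is correct and follows essentially the same approach as the paper: both use the critical-point equation~\eqref{selanumero0} for $u_\varepsilon$ to identify $I_\varepsilon(v_+)$ with $f_\varepsilon(u_\varepsilon+v_+)-f_\varepsilon(u_\varepsilon)$, and both handle the sign of $v$ via the pointwise inequality $|v_+(x)-v_+(y)|\le|v(x)-v(y)|$. The only difference is organisational---you split the argument into two clean steps (reduce to $v_+$, then compare), whereas the paper computes $I_\varepsilon(v)-f_\varepsilon(u_\varepsilon+v_+)+f_\varepsilon(u_\varepsilon)$ in one go and shows it equals $\tfrac12([v]_s^2-[v_+]_s^2)$; the content is identical.
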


\begin{proof}
Since~$u_\varepsilon$ is a local minimum of~$f_\varepsilon$
in~$\mathcal{D}^{s,2}(\R^N)$, there exists~$\eta>0$ such that,
for all~$[v]_{s}<\eta$, 
we have that \begin{equation}\label{chestrano22}
f_\varepsilon(u_\varepsilon+v)\geq f_\varepsilon(u_\varepsilon).\end{equation}

Also, recalling~\eqref{deffepsilon} and~\eqref{definizioneIepsilon}
(and using also the fact that~$u_\varepsilon$ is nonnegative),
\begin{equation}\label{eqIepsilon}
\begin{split}
&I_\varepsilon(v)-f_\varepsilon(u_\varepsilon+v_+)+f_\varepsilon(u_\varepsilon)\\&=\frac{1}{2}\int_{\R^N}|D^s v(x)|^2\,dx
-\frac{1}{2}\int_{\R^N}|D^s (u_\varepsilon+ v_+)(x)|^2\,dx
+\frac{1}{2}\int_{\R^N}|D^s u_\varepsilon(x)|^2\,dx \\
&\qquad
+\varepsilon \int_{\R^N}h(x)u_\varepsilon^q(x) v_+(x)\,dx
+\int_{\R^N}u_\varepsilon^{2^*_s-1} v_+(x)\,dx.
\end{split}
\end{equation}

We notice that \[
\begin{aligned}
\int_{\R^N}|D^s (u_\varepsilon+ v_+)(x)|^2\,dx&=
\int_{\R^N}|D^s u_\varepsilon(x)|^2\,dx
+\int_{\R^N}|D^s v_+(x)|^2,dx \\
&\qquad+2\iint_{\R^{2N}}\frac{(u_\varepsilon(x)-u_\varepsilon(y))(v_+(x)-v_+(y))}{|x-y|^{N+2s}}\,dx\,dy.
\end{aligned}
\]
Moreover, since~$u_\varepsilon$ is a nonnegative critical point of~$f_\varepsilon$,
\[
\begin{aligned}
0&=\langle f_\varepsilon'(u_\varepsilon),v_+\rangle
\\&=\iint_{\R^{2N}}\frac{(u_\varepsilon(x)-u_\varepsilon(y))(v_+(x)-v_+(y))}{|x-y|^{N+2s}}\,dx\,dy \\&\qquad\qquad
-\varepsilon \int_{\R^N}h(x)u_\varepsilon^q (x)v_+(x)\,dx
-\int_{\R^N}u_\varepsilon^{2^*_s-1}(x) v_+(x)\,dx.
\end{aligned}
\]
Gathering these pieces of information, we deduce from~\eqref{eqIepsilon} that
\begin{equation}\label{eqIepsilon.90}
I_\varepsilon(v)-f_\varepsilon(u_\varepsilon+v_+)+f_\varepsilon(u_\varepsilon)=\frac{1}{2}\int_{\R^N}|D^s v(x)|^2\,dx
-\frac{1}{2}\int_{\R^N}|D^s v_+(x)|^2\,dx.
\end{equation}

Now, since~$|v(x)-v(y)|^2\geq |v_+(x)-v_+(y)|^2$ for any~$x$, $y\in \R^N$, we have that
$$\frac{1}{2}\int_{\R^N}|D^s v(x)|^2\,dx
\ge\frac{1}{2}\int_{\R^N}|D^s v_+(x)|^2\,dx.$$
This and~\eqref{eqIepsilon.90} entail that
$$
I_\varepsilon(v)-f_\varepsilon(u_\varepsilon+v_+)+f_\varepsilon(u_\varepsilon)\ge0.$$

Accordingly, from this and~\eqref{chestrano22}
we conclude that, if~$[v_+]_s\le[v]_{s}<\eta$,
\[
I_\varepsilon(v)\geq f_\varepsilon(u_\varepsilon+v_+)-f_\varepsilon(u_\varepsilon)\ge 0=I_\varepsilon(0).
\]
Hence~$0$ is a local minimum of~$I_\varepsilon$, as desired.
\end{proof}

Now we prove that if~$v=0$ is the only critical point of~$I_\varepsilon$, then~$I_\varepsilon$ satisfies the Palais-Smale
condition below a suitable threshold.
To this end, we present the following result.

\begin{lem}\label{lemmavnbounded}
Suppose that~$(h_0)$ and~$(h_1)$ hold true and let~$\{v_n\}_n \subset \mathcal{D}^{s,2}(\R^N)$ be such that
\begin{equation}\label{PSprimaIe-fi01}
\displaystyle{\lim_{n\to +\infty}}I_\varepsilon(v_n)=c<\frac{s}{N}S^\frac{N}{2s}\end{equation}
and
\begin{equation}\label{PSprimaIe-fi02}\displaystyle{\lim_{n\to +\infty}}\sup_{{w\in\mathcal{D}^{s,2}(\R^N)}\atop{[w]_s\le1}}
\big|\langle I'_\varepsilon(v_n),w\rangle\big|= 0.\end{equation}

Then, the sequence~$\{v_n\}_n$ is bounded in~$\mathcal{D}^{s,2}(\R^N)$.
\end{lem}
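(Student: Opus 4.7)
The plan is to apply the standard boundedness trick for Palais-Smale sequences of functionals with a subcritical plus critical nonlinearity. Fix $\lambda \in (2, 2^*_s)$. Using~\eqref{selanumero} with $w := v_n$, and noting that $g(x, v_n) v_n = g(x, v_n) (v_n)_+$ (since $g(x,\cdot)$ vanishes on $(-\infty, 0)$ by~\eqref{degtfte093654}), one obtains the identity
\begin{equation*}
\lambda I_\varepsilon(v_n) - \langle I'_\varepsilon(v_n), v_n\rangle = \left(\frac{\lambda}{2} - 1\right)[v_n]_s^2 + \int_{\R^N} \bigl[g(x, v_n)(v_n)_+ - \lambda G(x, v_n)\bigr]\,dx.
\end{equation*}
By~\eqref{PSprimaIe-fi01}, $|I_\varepsilon(v_n)| \le |c| + 1$ for $n$ large, and~\eqref{PSprimaIe-fi02} gives $|\langle I'_\varepsilon(v_n), v_n\rangle| \le \epsilon_n [v_n]_s$ with $\epsilon_n \to 0^+$. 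Hence the left-hand side above is at most $C(1 + [v_n]_s)$ for $n$ large, so the task reduces to bounding the integrand from below by a pointwise expression whose integral is at worst of order $[v_n]_s^{q+1}$.

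Setting $a := u_\varepsilon(x)$, $b := (v_n)_+(x)$, and $p := 2^*_s$, one splits the integrand into the subcritical ($h$-dependent) and critical contributions. For the subcritical part, the inequalities $(a+b)^q \le a^q + b^q$ (from concavity of $t \mapsto t^q$ on $[0,+\infty)$, since $q \in (0,1)$) and $(a+b)^{q+1} \le 2^q(a^{q+1} + b^{q+1})$ yield pointwise control by $C\varepsilon |h(x)|(u_\varepsilon^{q+1} + v_+^{q+1})$. H\"older's inequality with exponents $r = \frac{2^*_s}{2^*_s - q - 1}$ and $\frac{2^*_s}{q+1}$, together with $(h_0)$, the fractional Sobolev embedding, and $u_\varepsilon \in L^{2^*_s}(\R^N)$, then bounds the absolute value of this contribution by $C(1 + [v_n]_s^{q+1})$.

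For the critical part, the identity $b(a+b)^{p-1} = (a+b)^p - a(a+b)^{p-1}$ yields
\begin{equation*}
b\bigl[(a+b)^{p-1} - a^{p-1}\bigr] - \frac{\lambda}{p}\bigl[(a+b)^p - a^p - p\, a^{p-1} b\bigr] = \left(1 - \frac{\lambda}{p}\right)(a+b)^p - a(a+b)^{p-1} + (\lambda - 1) a^{p-1} b + \frac{\lambda}{p} a^p.
\end{equation*}
Since $1 - \lambda/p > 0$, Young's inequality in the form $a(a+b)^{p-1} \le \delta(a+b)^p + C_\delta a^p$, applied with $\delta \le \tfrac12(1 - \lambda/p)$, absorbs the problematic term into a fraction of $(1 - \lambda/p)(a+b)^p$; the remaining nonnegative terms $(\lambda - 1) a^{p-1} b$ and $\frac{\lambda}{p} a^p$ are simply dropped. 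Using $(a+b)^p \ge b^p$ (Lemma~\ref{lemma6.5.4}), one is left with a pointwise lower bound of the form $c_\lambda\, b^p - C\, a^p$, whose integral over $\R^N$ is bounded below by $-C\|u_\varepsilon\|_{L^{2^*_s}(\R^N)}^{2^*_s}$, a finite constant.

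Putting the two estimates together gives
\begin{equation*}
\left(\frac{\lambda}{2} - 1\right)[v_n]_s^2 \le C\bigl(1 + [v_n]_s + [v_n]_s^{q+1}\bigr)
\end{equation*}
for all sufficiently large $n$, and the boundedness of $\{v_n\}_n$ in $\mathcal{D}^{s,2}(\R^N)$ follows since $\lambda/2 - 1 > 0$ while $q + 1 < 2$. The main technical delicacy lies in the critical-part algebra: the naive bound $a(a+b)^{p-1} \le (a+b)^p$ leaves no room to absorb, so Young's inequality must be applied with a parameter tuned to the gap $1 - \lambda/p$ in order for a positive fraction of $(a+b)^p$ to survive. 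Note that the specific threshold $\frac{s}{N}S^{\frac{N}{2s}}$ in~\eqref{PSprimaIe-fi01} is not used at this stage; it will only be needed in the subsequent compactness step.
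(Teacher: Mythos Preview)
Your argument is correct and, in fact, more streamlined than the paper's. The paper proceeds by a two-step bootstrap: it first establishes the auxiliary estimates
\[
\int_{\R^N}\Big(G(x,v_n)-\tfrac{1}{2^*_s}g(x,v_n)v_n\Big)\,dx\le C(\varepsilon+\delta)\|(v_n)_+\|_{L^{2^*_s}}^{2^*_s}+C_{\varepsilon,\delta}
\quad\text{and}\quad
\int_{\R^N}g(x,v_n)v_n\,dx\ge \tfrac18\|(v_n)_+\|_{L^{2^*_s}}^{2^*_s}-\widehat C,
\]
then combines them with $\lambda=2$ to control $\|(v_n)_+\|_{L^{2^*_s}}^{2^*_s}$ by $[v_n]_s$, and finally uses $\lambda=2^*_s$ to close the loop. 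The first estimate relies on a dedicated algebraic lemma (Lemma~6.2.1 of~\cite{MR3617721}), and the absorption step in the middle requires taking~$\varepsilon$ sufficiently small.

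By contrast, you work with a single $\lambda\in(2,2^*_s)$ and bound the critical integrand $g_2\,v_+ - \lambda G_2$ pointwise via the identity you display, handling the cross term $a(a+b)^{p-1}$ by Young's inequality with a parameter tuned to the gap $1-\lambda/p$. This bypasses the two-stage argument and the auxiliary lemma, and---as you note---needs no smallness of~$\varepsilon$; the resulting inequality $(\lambda/2-1)[v_n]_s^2\le C(1+[v_n]_s+[v_n]_s^{q+1})$ concludes immediately. The paper's route, on the other hand, yields the side bound $\|(v_n)_+\|_{L^{2^*_s}}^{2^*_s}\lesssim 1+[v_n]_s$ as a byproduct, though this is not used later.
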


\begin{proof}
We first show that, for all~$\delta>0$, there exists~$C_{\delta}>0$ such that
\begin{equation}\label{claim6.2.3}
\int_{\R^N}\left(G(x,v_n(x))-\frac{1}{2^*_s}g(x,v_n(x))v_n(x) \right)\,dx
\leq C(\varepsilon+\delta)\|(v_n)_+\|_{L^{2^*_s}(\R^N)}^{2^*_s}+C\varepsilon+C_{\delta}
\end{equation}
for some~$C>0$ independent of~$\varepsilon$ and~$\delta$.

To prove this, recalling the definition of~$I_\varepsilon$ in~\eqref{definizioneIepsilon}, we write~$g=g_1+g_2$ and~$G=G_1+G_2$, where
\[
\begin{aligned}
&g_1(x,t):=\varepsilon h(x)\Big((u_\varepsilon(x)+t_+)^q-u_\varepsilon^q(x)\Big),
\\
&g_2(x,t):=(u_\varepsilon(x)+t_+)^{2^*_s-1}-u_\varepsilon^{2^*_s-1}(x),
\\
&G_1(x,t):=\frac{\varepsilon h(x)}{q+1}
\Big((u_\varepsilon(x)+t_+)^{q+1}-u_\varepsilon^{q+1}(x)\Big)
-\varepsilon h(x)u_\varepsilon^q (x)t_+
\\{\mbox{and }}\qquad
&G_2(x,t):=\frac{1}{2^*_s}
\Big((u_\varepsilon(x)+t_+)^{2^*_s}-u_\varepsilon^{2^*_s}(x)\Big)
-u_\varepsilon^{2^*_s-1}(x) t_+.
\end{aligned}
\]
Since~$q\in (0,1)$, we have that, for any~$\tau \geq 0$,
\begin{equation}\label{1234567890098765432qwertyuio}
(1+\tau)^q-1=q\int_0^\tau (1+\theta)^{q-1}\,d\theta
\leq q\int_0^\tau \theta^{q-1}\,d\theta =\tau^q
\end{equation}
and
\begin{equation}\label{1234567890098765432qwertyuio2}
(1+\tau)^{q+1}-1=(q+1)\int_0^\tau (1+\theta)^q\,d\theta
\leq (q+1)(1+\tau)^q\tau.
\end{equation} 

We observe that, for all~$x\in\R^N$,
\begin{equation}\label{equa:01901}\begin{split}
&(u_\varepsilon(x)+t_+)^q-u_\varepsilon^q(x)
\le t_+^q
\\{\mbox{and }}\qquad & 
(u_\varepsilon(x)+t_+)^{q+1}-u_\varepsilon^{q+1}(x)
\le (q+1)(u_\varepsilon(x)+t_+)^qt_+.\end{split}
\end{equation}
Indeed, if~$u_\varepsilon(x)=0$, then the inequalities in~\eqref{equa:01901}
are trivially satisfied. If instead~$u_\varepsilon(x)>0$,
we take~$\tau:=t_+/u_\varepsilon( x)$ 
in~\eqref{1234567890098765432qwertyuio}
and~\eqref{1234567890098765432qwertyuio2}
and
we find that
\begin{eqnarray*}
(u_\varepsilon(x)+t_+)^q-u_\varepsilon^q(x)
=u_\varepsilon^q(x)\big((1+\tau)^q-1\big)
\leq u_\varepsilon^q(x) \tau^q=t_+^q\end{eqnarray*}
and \begin{eqnarray*} &&
(u_\varepsilon(x)+t_+)^{q+1}-u_\varepsilon^{q+1}(x)
=u_\varepsilon^{q+1}(x)\big((1+\tau)^{q+1}-1\big) \\&&\qquad\qquad
\leq (q+1)u_\varepsilon^{q+1}(x)(1+\tau)^q\tau
=(q+1)(u_\varepsilon(x)+t_+)^qt_+,
\end{eqnarray*}
which give the desired inequalities in~\eqref{equa:01901}.

As a consequence of~\eqref{equa:01901}, we have that
\begin{eqnarray*}&&
|g_1(x,t)|\leq \varepsilon |h(x)|t_+^q
\\{\mbox{and }}
&&
|G_1(x,t)|\leq \varepsilon |h(x)|(u_\varepsilon(x)+t_+)^q t_+
+\varepsilon |h(x)|u_\varepsilon^q (x)t_+
\leq 2 \varepsilon |h(x)|(u_\varepsilon(x)+t_+)^q t_+,
\end{eqnarray*}
which yield that
\begin{equation}\label{GOMI}
|G_1(x,t)|+|g_1(x,t)t|
\leq 2 \varepsilon |h(x)|(u_\varepsilon(x)+t_+)^q t_+
+\varepsilon |h(x)|t_+^{q+1}
\leq 3 \varepsilon |h(x)|(u_\varepsilon(x)+t_+)^q t_+.
\end{equation}

We also recall that~$u_\varepsilon\in L^\infty(\R^N)$ 
and~$u_\varepsilon\to0$ in~$\mathcal{D}^{s,2}(\R^N)$ as~$\varepsilon\to0$,
thanks to Proposition~\ref{prop:primasol}, which entails that~$\|u_\varepsilon\|_{L^\infty(\R^N)}$ is bounded uniformly in~$\varepsilon$
(see e.g. formula~(5.1.6)
and the last formula in the proof of
Proposition~5.1.1 in~\cite{MR3617721}).
For this reason, it follows from~\eqref{GOMI} that
\begin{equation}\label{equazione6.2.5}
|G_1(x,t)|+|g_1(x,t)t|\leq C \varepsilon |h(x)|(1+t_+^{q+1})
\end{equation}
for some~$C>0$ independent of~$\varepsilon$. 

Additionally, since~$q+1<2^*_s$, we have that~$
t_+^{q+1}\leq 1+t_+^{2^*_s}$,
and therefore~\eqref{equazione6.2.5} entails that
\begin{equation}\label{equazione6.2.6}
|G_1(x,t)|+|g_1(x,t)t|\leq C \varepsilon |h(x)|(1+t_+^{2^*_s})
\end{equation}
up to changing~$C$.

Moreover, by Lemma~6.2.1 in~\cite{MR3617721}, for any~$\delta>0$ there 
exists~$M_\delta>0$ such that, for any~$\alpha$, $\beta \geq 0$ 
and~$m>0$,
\begin{equation*}
(\alpha+\beta)^{m+1}-\alpha^{m+1}-(m+1)\alpha^m\beta
-\beta((\alpha+\beta)^{m}-\alpha^{m})
\leq \delta \beta^{m+1}+M_\delta \alpha^{m+1}.
\end{equation*}
Now, given~$\delta \in(0,1)$, we use this inequality
with~$\alpha: =u_\varepsilon$, $\beta:=t_+$ and~$m:=2^*_s-1$ to see that
\[
\begin{aligned}&
G_2(x,t)-\frac{1}{2^*_s}g_2(x,t)t\\
&=\frac{1}{2^*_s}\Big((u_\varepsilon(x)+t_+)^{2^*_s}-u_\varepsilon^{2^*_s}(x)
-2^*_s u_\varepsilon^{2^*_s-1}(x)t_+
-(u_\varepsilon(x)+t_+)^{2^*_s-1}t+u_\varepsilon^{2^*_s-1}(x)t\Big)  \\
&=\frac{1}{2^*_s}\Big((u_\varepsilon(x)+t_+)^{2^*_s}-u_\varepsilon^{2^*_s}(x)
-2^*_s u_\varepsilon^{2^*_s-1}(x)t_+
-(u_\varepsilon(x)+t_+)^{2^*_s-1}t_+ +u_\varepsilon^{2^*_s-1}(x)t_+\Big) \\
&\leq \frac{1}{2^*_s}\big(\delta t_+^{2^*_s}+M_\delta 
u_\varepsilon^{2^*_s}(x)\big).
\end{aligned}
\]
Together with~\eqref{equazione6.2.6}, this gives that
\[
\begin{aligned}
G(x,t)-\frac{1}{2^*_s}g(x,t)t
&=G_1(x,t)-\frac{1}{2^*_s}g_1(x,t)t 
+G_2(x,t)-\frac{1}{2^*_s}g_2(x,t)t \\
&\leq C\varepsilon |h(x)|(1+t_+^{2^*_s})
+\frac{1}{2^*_s}\big(\delta t_+^{2^*_s}+M_\delta 
u_\varepsilon^{2^*_s}(x)\big) \\
&\leq C(\varepsilon +\delta)t_+^{2^*_s} +C\varepsilon|h(x)|
+M_\delta u_\varepsilon^{2^*_s}(x),
\end{aligned}
\]
up to changing the constants~$C$ and~$M_\delta$. 

As a consequence,
\begin{eqnarray*}
&&\int_{\R^N}\left(G(x,v_n(x))-\frac{1}{2^*_s}g(x,v_n(x))v_n(x) \right)\,dx\\
&&\qquad\le C(\varepsilon +\delta)\int_{\R^N}
(v_n)_+^{2^*_s}(x)\,dx +C\varepsilon\int_{\R^N}|h(x)|\,dx
+M_\delta \int_{\R^N}u_\varepsilon^{2^*_s}(x)\,dx\\&&\qquad
\leq C(\varepsilon +\delta)\|(v_n)_+\|_{L^{2^*_s}(\R^N)}^{2^*_s}
+C\varepsilon+M_\delta
\end{eqnarray*}
up to renaming~$C$ and~$M_\delta$ once more, and therefore the claim in~\eqref{claim6.2.3} is established.

Now we show that 
\begin{equation}\label{claim6.2.4}
\int_{\R^N}g(x,v_n(x))v_n(x)\,dx
\geq \frac{\|(v_n)_+\|_{L^{2^*_s}(\R^N)}^{2^*_s}}{8}-\widehat C
\end{equation}
for some~$\widehat C>0$. 

To prove this claim, for any~$\tau \geq 0$, we set
\[
\ell(\tau):=\frac{\tau^{2^*_s-1}}{2}-(1+\tau)^{2^*_s-1}+1.
\]
We observe that~$\ell(0)=0$ and 
\[
\lim_{\tau \to +\infty}\ell(\tau)=-\infty.
\]
Therefore,
\[
L:=\sup_{\tau \geq 0}\ell(\tau)\in [0,+\infty),
\]
and thus
\begin{equation}\label{nbvcx123456789}
(1+\tau)^{2^*_s-1}-1=\frac{\tau^{2^*_s-1}}{2}-\ell(\tau)
\geq \frac{\tau^{2^*_s-1}}{2}-L.
\end{equation}

We point out that, for all~$x\in\R^N$,
\begin{equation}\label{equa:019012}
g_2(x,v_n(x))\ge\frac{(v_n)_+^{2^*_s-1}(x)}{2}-Lu_\varepsilon^{2^*_s-1}(x).
\end{equation}
Indeed, if~$u_\varepsilon(x)=0$, then~$g_2(x,v_n(x))=(v_n)_+^{2^*_s-1}(x)$, which entails~\eqref{equa:019012}. If insted~$u_\varepsilon(x)\neq0$,
then we can take~$\tau:=(v_n)_+(x)/u_\varepsilon(x)$
in~\eqref{nbvcx123456789} and obtain that
\begin{equation*}
\begin{aligned}
g_2(x,v_n(x))&=(u_\varepsilon(x)+(v_n)_+(x))^{2^*_s-1}-u_\varepsilon^{2^*_s-1}(x)
=u_\varepsilon^{2^*_s-1}(x)\big((1+\tau)^{2^*_s-1}-1\big) \\
&\qquad\geq u_\varepsilon^{2^*_s-1}(x)
\left( \frac{\tau^{2^*_s-1}}{2}-L\right)
=\frac{(v_n)_+^{2^*_s-1}(x)}{2}-Lu_\varepsilon^{2^*_s-1}(x),
\end{aligned}
\end{equation*}
thus completing the proof of~\eqref{equa:019012}.

Hence, integrating~\eqref{equa:019012} and using the Young inequality, we obtain that
\begin{equation}\label{equazione6.2.7}\begin{split}&
\int_{\R^N}g_2(x,v_n(x))v_n(x) \,dx=
\int_{\R^N}g_2(x,v_n(x))(v_n)_+(x) \,dx
\\&\qquad\ge
\int_{\R^N}\left(
\frac{(v_n)_+^{2^*_s}(x)}{2}-L
u_\varepsilon^{2^*_s-1}(x)(v_n)_+(x)\right)\,dx
\geq \frac{\|(v_n)_+\|_{L^{2^*_s}(\R^N)}^{2^*_s}}{4}-\widetilde C
\end{split}
\end{equation}
for some~$\widetilde C>0$ independent of~$\varepsilon$.

Additionally, by~\eqref{equazione6.2.6} we have that
\[
\left| \int_{\R^N}g_1(x,v_n(x))v_n(x) \,dx \right|
\leq C\varepsilon\int_{\R^N}|h(x)|\big(1+(v_n)_+^{2^*_s}(x)\big)\,dx 
\leq C\varepsilon\Big(1+ \|(v_n)_+\|_{L^{2^*_s}(\R^N)}^{2^*_s}\Big),
\]
up to renaming~$C$.

Combining this with~\eqref{equazione6.2.7}, we obtain that, if~$\varepsilon$
is sufficiently small,
\begin{eqnarray*}
\int_{\R^N}g(x,v_n(x))v_n(x) \,dx &=&
\int_{\R^N}g_1(x,v_n(x))v_n(x) \,dx +\int_{\R^N}g_2(x,v_n(x))v_n(x) \,dx  \\
& \geq& \frac{\|(v_n)_+\|_{L^{2^*_s}(\R^N)}^{2^*_s}}{4}-\widetilde C
-C\varepsilon\Big(1+ \|(v_n)_+\|_{L^{2^*_s}(\R^N)}^{2^*_s}\Big)\\
&
\geq& \frac{\|(v_n)_+\|_{2^*_s}^{2^*_s}}{8}-\widehat C,
\end{eqnarray*}
for some~$\widehat C>0$.
This establishes the claim in~\eqref{claim6.2.4}.

Now, we show that the sequence~$\{v_n\}_n$ is bounded in~$\mathcal{D}^{s,2}(\R^N)$. For this, let~$\lambda>1$.
{F}rom~\eqref{PSprimaIe-fi01} and~\eqref{PSprimaIe-fi02} 
we have that
\begin{equation}\label{PSprimaIe}
I_\varepsilon(v_n)-\frac{1}{\lambda}\langle I_\varepsilon'(v_n),v_n \rangle
\le M+\frac{M}{\lambda}[v_n]_{s},\end{equation} for some~$M>0$.

Exploiting this inequality with~$\lambda:=2$, we get that
\begin{eqnarray*}
&&M+\frac{M}{2}[v_n]_{s}\\&
\geq&
I_\varepsilon(v_n)-\frac{1}{2}\langle I_\varepsilon'(v_n),v_n \rangle
\\&=&-\int_{\R^N}G(x,v_n(x)) \,dx +\frac{1}{2}\int_{\R^N}g(x,v_n(x))v_n(x) \,dx
\\
&=&\frac{1}{2^*_s}\int_{\R^N}g(x,v_n(x))v_n(x) \,dx
-\int_{\R^N}G(x,v_n(x))\,dx
+\left(\frac{1}{2}-\frac{1}{2^*_s}\right)\int_{\R^N}g(x,v_n(x))v_n(x) \,dx
\\
&=&\frac{1}{2^*_s}\int_{\R^N}g(x,v_n(x))v_n(x) \,dx
-\int_{\R^N}G(x,v_n(x))\,dx
+\frac{s}{N}\int_{\R^N}g(x,v_n(x))v_n(x) \,dx.
\end{eqnarray*}
Accordingly, given~$\delta>0$ we use~\eqref{claim6.2.3} and~\eqref{claim6.2.4}
and we thereby arrive at
\[
M+\frac{M}{2}[v_n]_{s}
\geq -C(\varepsilon+\delta)\|(v_n)_+\|^{2^*_s}_{L^{2^*_s}(\R^N)} 
-C\varepsilon -C_{\delta}
+\frac{s}{N}\left(\frac{\|(v_n)_+\|_{L^{2^*_s}(\R^N)}^{2^*_s}}{8}-\widehat C \right).
\]
We now take~$\delta:=\frac{s}{16CN}$ (which is now fixed once and for all)
and we get that
\[
M+\frac{M}{2}[v_n]_{s}
\geq -C\varepsilon\|(v_n)_+\|^{2^*_s}_{L^{2^*_s}(\R^N)} 
-C{\varepsilon}-C
+\frac{s}{N}\left(\frac{\|(v_n)_+\|_{L^{2^*_s}(\R^N)}^{2^*_s}}{16}-\widehat C \right).
\]
We also take~$\varepsilon$ sufficiently smally, say~$\varepsilon\in\left(0,\frac{s}{32CN}\right)$, and we obtain that
\[
M+\frac{M}{2}[v_n]_{s}
\geq \frac{s}{N}\frac{\|(v_n)_+\|_{L^{2^*_s}(\R^N)}^{2^*_s}}{32}
-C,
\]
up to renaming the constant~$C>0$. 

We can write this inequality as
\begin{equation}\label{equazione6.2.9}
\|(v_n)_+\|_{L^{2^*_s}(\R^N)}^{2^*_s}\leq \widetilde M \big([v_n]_{s}+1\big)
\end{equation}
for some~$\widetilde M>0$.

Furthermore, employing~\eqref{claim6.2.3} with~$\delta:=1$
and~\eqref{PSprimaIe} with~$\lambda:=2^*_s$, we gather that
\[
\begin{aligned}
M+\frac{M}{2^*_s}[v_n]_{s}
&\geq I_\varepsilon(v_n)-\frac{1}{2^*_s}\langle I_\varepsilon'(v_n),(v_n)\rangle \\
&=\left(\frac{1}{2}-\frac{1}{2^*_s}\right)[v_n]_{s}^2
+\frac{1}{2^*_s}\int_{\R^N}g(x,v_n(x))v_n(x) \,dx-\int_{\R^N}G(x,v_n(x))\,dx
\\
&\geq \frac{s}{N}[v_n]_{s}^2-C\Big(\|(v_n)_+\|_{2^*_s}^{2^*_s}+1\Big)
\end{aligned}
\]
provided that~$\varepsilon$ is sufficiently small. 

As a consequence,
\[
[v_n]_{s}^2
\leq \widehat M\Big([v_n]_{s}+\|(v_n)_+\|_{L^{2^*_s}(\R^N)}^{2^*_s}+1 \Big)
\]
for some~$\widehat M>0$.

{F}rom this and~\eqref{equazione6.2.9}, we infer that
\[
[v_n]_{s}^2 \leq M\big([v_n]_{s}+1 \big),
\]
for some~$M>0$.
Thus~$\{v_n\}_n$ is bounded in~$\mathcal{D}^{s,2}(\R^N)$, as desired.
\end{proof}

We now prove that the functional~$I_\varepsilon$ satisfies the 
Palais-Smale condition below a suitable level~$c$.
The strategy of the proof is similar 
to that of Proposition~\ref{PSfepsilon}, nevertheless we provide
here below a self-contained proof,
as in this case there are some additional difficulties to take care of. 

\begin{prop}\label{PSIepsilon}
Suppose that~$(h_0)$ and~$(h_1)$ hold true.
Assume that~$v=0$ is the only critical point of~$I_\varepsilon$ in~$\mathcal{D}^{s,2}(\R^N)$.

Let~$\{v_n\}_n \subset \mathcal{D}^{s,2}(\R^N)$ be such that
\begin{equation}\label{CCM}
\displaystyle{\lim_{n\to +\infty}}I_\varepsilon(v_n)=c<\frac{s}{N}S^\frac{N}{2s}\end{equation}
and \begin{equation}\label{CCMBIS}
{\lim_{n\to +\infty}}\sup_{{w\in\mathcal{D}^{s,2}(\R^N)}\atop{[w]_s\le1}}\big|\langle I'_\varepsilon(v_n), w\rangle\big|= 0.
\end{equation}

Then, up to a subsequence, $\{v_n\}_n$ strongly converges to~$v=0$ in~$\mathcal{D}^{s,2}(\R^N)$ as~$n\to+\infty$.
\end{prop}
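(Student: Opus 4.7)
The plan is to follow the same overall architecture as the proof of Proposition~\ref{PSfepsilon}, but with careful bookkeeping to handle the shift by~$u_\varepsilon$ and the positive part~$v_+$ that appear in the nonlinearity of~$I_\varepsilon$. First, Lemma~\ref{lemmavnbounded} (which already uses~\eqref{CCM} and~\eqref{CCMBIS}) guarantees that~$\{v_n\}_n$ is bounded in~$\mathcal{D}^{s,2}(\R^N)$, so, up to a subsequence, there exists~$v\in\mathcal{D}^{s,2}(\R^N)$ with~$v_n\rightharpoonup v$ weakly, $v_n\to v$ in~$L^r_{\rm loc}(\R^N)$ for every~$r\in[1,2^*_s)$, and~$v_n\to v$ a.e. Repeating the density and weak-limit argument of Lemma~\ref{wea}, adapted to the nonlinearity~$g(x,t)$ (which is Carath\'eodory, locally Lipschitz in~$t$, and enjoys the growth~\eqref{equazione6.2.6}), one shows that~$v$ is a critical point of~$I_\varepsilon$. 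By the standing hypothesis that~$v=0$ is the only critical point, it follows that~$v\equiv 0$. In particular~$(v_n)_+\to 0$ in~$L^r_{\rm loc}(\R^N)$ for~$r\in[1,2^*_s)$ and a.e.

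Next, I would apply Theorem~1.1 of~\cite{MR3866572} to the sequence~$\{(v_n)_+\}_n$ (which is bounded in~$\mathcal{D}^{s,2}(\R^N)$ by~\eqref{d4yt4fhjkagf5674839jijhgthuikio}), producing measures~$\mu,\nu$ with atoms~$\mu_i,\nu_i$ at points~$x_i$, quantities~$\mu_\infty,\nu_\infty$ as in~\eqref{defmuinfinito}--\eqref{defnuinfinito}, and the inequalities~\eqref{CCSi} and~\eqref{CCSinfinito}. To rule out concentration, I would test~\eqref{CCMBIS} against~$\phi_\delta v_n$ (with~$\phi_\delta:=\phi_{\delta,x_i}$) and against~$\phi_R v_n$ (with~$\phi_R:=1-\phi_{R,0}$), exactly as in the derivation of~\eqref{scontrofdelta} and~\eqref{scontrofR}. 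The commutator term is handled by Lemmata~\ref{lemma2.4}, \ref{lemmalimiteDphidelta} and~\ref{lemmalimiteDphiR} as before. The key point where the shift matters is the critical term: since~$g(x,v_n)v_n=g(x,v_n)(v_n)_+$ (because~$g(x,t)=0$ for~$t<0$) and since~$g_2(x,v_n)(v_n)_+=\bigl((u_\varepsilon+(v_n)_+)^{2^*_s-1}-u_\varepsilon^{2^*_s-1}\bigr)(v_n)_+$ behaves, modulo lower-order terms controlled by~$u_\varepsilon^{2^*_s-1}(v_n)_+\to 0$ in~$L^1_{\rm loc}$, like~$(v_n)_+^{2^*_s}$, the localized limit contributes exactly~$\nu_i$ (respectively~$\nu_\infty$). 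For the Dirichlet form, I would use the pointwise inequality~\eqref{d4yt4fhjkagf5674839jijhgthuikio} to dominate~$|D^s v_n|^2\ge|D^s(v_n)_+|^2$, whose~$\phi_\delta$- and~$\phi_R$-mass converge to~$\mu(\{x_i\})\ge\mu_i$ and to~$\mu_\infty$, respectively, by~\eqref{CCmui} and~\eqref{CCmuinfinito}. The outcome is~$\mu_i\le\nu_i$ and~$\mu_\infty\le\nu_\infty$, which combined with~\eqref{CCSi} and~\eqref{CCSinfinito} yields the dichotomy~\eqref{claimaggpergh75} and~\eqref{claimaggpergh75BIS}: at every concentration atom the mass is at least~$S^{N/(2s)}$.

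To close the argument, I would compute
\[
c=\lim_{n\to+\infty}\left(I_\varepsilon(v_n)-\frac{1}{2^*_s}\langle I'_\varepsilon(v_n),v_n\rangle\right)=\lim_{n\to+\infty}\left(\frac{s}{N}[v_n]_s^2+\int_{\R^N}\Big(\tfrac{1}{2^*_s}g(x,v_n)v_n-G(x,v_n)\Big)dx\right),
\]
and, using the elementary bounds~\eqref{claim6.2.3}--\eqref{claim6.2.4} from the proof of Lemma~\ref{lemmavnbounded} together with the vanishing of the subcritical residual (which follows from~$v_n\to 0$ in~$L^r_{\rm loc}$ and a tail estimate as in~\eqref{limconvergenzadebole}), extract a lower bound of the form~$c\ge\frac{s}{N}(\sum_{i}\nu_i+\nu_\infty)-o_\varepsilon(1)$. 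Combining with the atom threshold~$S^{N/(2s)}$ and the strict bound~$c<\frac{s}{N}S^{N/(2s)}$ of~\eqref{CCM} forces, for~$\varepsilon$ small, every~$\nu_i=0$ and~$\nu_\infty=0$. Hence~$\|(v_n)_+\|_{L^{2^*_s}(\R^N)}\to 0$, and testing~\eqref{CCMBIS} against~$v_n/[v_n]_s$ and recalling~\eqref{selanumero} yields~$[v_n]_s^2=\langle I'_\varepsilon(v_n),v_n\rangle+\int g(x,v_n)v_n\,dx\to 0$ via~\eqref{equazione6.2.6}, giving the desired strong convergence to~$v=0$. The main obstacle I expect is the interplay between~$v_n$ and~$(v_n)_+$ in the nonlocal Dirichlet form: unlike in the local case, the identity~$|D^s v|^2=|D^s v_+|^2+|D^s v_-|^2$ fails and one only has a pointwise one-sided bound, so one must be careful to align the concentration-compactness (applied to~$(v_n)_+$) with the estimates coming from~\eqref{CCMBIS} (which naturally involve~$v_n$), using the sign structure of~$g$ to bridge the two.
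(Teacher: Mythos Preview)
Your strategy is sound and is a genuine alternative to the paper's argument. The paper applies concentration--compactness to $u_\varepsilon+(v_n)_+$, tests the combination $\langle I'_\varepsilon(v_n),w\rangle+\langle f'_\varepsilon(u_\varepsilon),w\rangle$ against $w=(u_\varepsilon+v_n)\phi_\delta$ (see~\eqref{xherg56cjhiamala43t}), handles the mismatch between $|D^s(u_\varepsilon+v_n)|^2$ and $|D^s(u_\varepsilon+(v_n)_+)|^2$ by an explicit expansion whose cross terms vanish by weak convergence, and finally uses $I_\varepsilon-\tfrac12\langle I'_\varepsilon,\cdot\rangle$ for the energy identity. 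You instead apply concentration--compactness directly to $(v_n)_+$, test only $I'_\varepsilon$ against $\phi_\delta v_n$, bridge the Dirichlet forms via the one-sided inequality $|D^sv_n|^2\ge|D^s(v_n)_+|^2$, and use $I_\varepsilon-\tfrac{1}{2^*_s}\langle I'_\varepsilon,\cdot\rangle$. Your route is lighter in that it avoids carrying $u_\varepsilon$ through the measure-theoretic step, at the price of having to reduce $(u_\varepsilon+(v_n)_+)^{2^*_s-1}(v_n)_+$ to $(v_n)_+^{2^*_s}$ by hand (the paper gets this for free via~\eqref{limitephideltau+v}).

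One step needs tightening. Citing~\eqref{claim6.2.3} for the integral $\int(\tfrac{1}{2^*_s}g\,v_n-G)$ leaves a residual constant $C\varepsilon+C_\delta$ and only gives $c\ge\tfrac{s}{N}\lim[v_n]_s^2-\mathrm{const}$; the ``$-o_\varepsilon(1)$'' and ``for $\varepsilon$ small'' are then load-bearing but unjustified at fixed~$\varepsilon$. What you actually need is that this integral tends to zero exactly: the $g_1,G_1$ contribution is subcritical and vanishes as you say, but the $g_2,G_2$ contribution is not subcritical---it simplifies to $\tfrac{1}{2^*_s}\int\big[u_\varepsilon^{2^*_s}-(u_\varepsilon+(v_n)_+)^{2^*_s-1}u_\varepsilon\big]+(1-\tfrac{1}{2^*_s})\int u_\varepsilon^{2^*_s-1}(v_n)_+$, and both pieces vanish (the first by the argument behind~\eqref{speriamobenechicelofafareeg}, the second by weak convergence). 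This yields $c=\tfrac{s}{N}\lim_n[v_n]_s^2\ge\tfrac{s}{N}\big(\sum_i\mu_i+\mu_\infty\big)$, and since your inequality $\mu_i\le\nu_i$ together with $S\nu_i^{2/2^*_s}\le\mu_i$ forces $\mu_i\ge S^{N/(2s)}$ whenever $\nu_i\ne0$ (and likewise at infinity), the atom-counting then goes through cleanly with no $\varepsilon$-dependent error.
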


\begin{proof}
Thanks to Lemma~\ref{lemmavnbounded}, we know that~$\{v_n\}_n$ is bounded in~$\mathcal{D}^{s,2}(\R^N)$, and thus so is~$\{(v_n)_+\}_n$.
Therefore, up to a subsequence,
\begin{equation*} \begin{split}
&v_n \rightharpoonup v_\infty \mbox{ weakly in }\mathcal{D}^{s,2}(\R^N),
\\ &v_n\rightarrow v_\infty \mbox{ in } L^r_{loc}({\R^N}) \;{\mbox{ for every~$r\in [1,2^*_s)$,}}
\\&
v_n\to v_\infty \mbox{ a.e. in } \R^N,\\
&(v_n)_+ \rightharpoonup (v_\infty)_+ \mbox{ weakly in }\mathcal{D}^{s,2}(\R^N),
\\ &(v_n)_+\rightarrow (v_\infty)_+ \mbox{ in } L^r_{loc}({\R^N}) \;{\mbox{ for every~$r\in [1,2^*_s)$}}
\\ \mbox{and}\qquad&
(v_n)_+\to (v_\infty)_+ \mbox{ a.e. in } \R^N.
\end{split}\end{equation*}
{F}rom the strong convergence of~$(v_n)_+$, we can assume up to a 
subsequence that~$u_\varepsilon+(v_n)_+$ converges to~$u_\varepsilon+(v_\infty)_+$ in~$L^r_{loc}(\R^N)$ for every~$r\in [1, 2^*_s)$.

We point out that
\begin{equation}\label{fhuiet43yt43ty893098765}
{\mbox{$v_\infty$ is a critical point of~$I_\varepsilon$.}}\end{equation}
To check this, one can argue exactly as in the proof of Lemma~\ref{wea},
with suitable modifications for the convergence of the following terms:
given~$w\in C^\infty_c(\R^N)$ with supp$(\phi)\subset K$,
we can use the H\"older inequality and
Lemma~\ref{lemmmaconvergenzaLrloc} (applied with~$u_n:=(u_\varepsilon+(v_n)_+)$) to see that
\[
\begin{aligned}
\lim_{n\to +\infty} 
&\left|\int_{\R^N}h(x)\big|\big(
u_\varepsilon(x)+(v_n)_+(x)\big)^q-\big(u_\varepsilon(x)+(v_\infty)_+(x)\big)^q\big|\,w(x)\,dx\right| \\
&\leq \lim_{n\to +\infty} C
\int_{K} \big|\big(u_\varepsilon(x)+(v_n)_+(x)\big)^q-\big(u_\varepsilon(x)+(v_\infty)_+(x)\big)^q\big|^\frac{r}{q}\,dx
=0
\end{aligned}
\]
and
\[
\begin{aligned}
\lim_{n\to +\infty} 
&\left|\int_{\R^N}\big|\big(u_\varepsilon(x)+(v_n)_+(x)\big)^{2^*_s-1}
-\big(u_\varepsilon(x)+(v_\infty)_+(x)\big)^{2^*_s-1}\big|\,w(x)\,dx \right|\\
&\leq \lim_{n\to +\infty} C
\int_{K} \big|\big(u_\varepsilon(x)+(v_n)_+(x)\big)^{2^*_s-1}-\big(u_\varepsilon(x)+(v_\infty)_+(x)\big)^{2^*_s-1}\big|^\frac{r}{2^*_s-1}\,dx=0.
\end{aligned}
\]

Having established~\eqref{fhuiet43yt43ty893098765},
since by hypothesis we have that~$v=0$ is the only 
critical point of~$I_\varepsilon$, we conclude that~$v_\infty=0$.
In particular, it follows that~$u_\varepsilon+ (v_n)_+ \rightharpoonup u_\varepsilon$ weakly in~$\mathcal{D}^{s,2}(\R^N)$.

We can thus apply Theorem~1.1 in~\cite{MR3866572} to find that 
there exist two bounded measures~$\mu$ and~$\nu$, an at most 
countable set of indices~$I$, and positive real numbers~$\mu_i$ and~$\nu_i$, with~$i\in I$, such that the following convergence 
holds weakly* in the sense of measures:
\begin{align}\label{CCvmui}
&|D^s (u_\varepsilon+ (v_n)_+)|^2\,dx \rightharpoonup \mu
\geq |D^s u_\varepsilon|^2\,dx
+\sum_{i\in I}\mu_i \delta_{x_i}\\ \label{CCvnui}
{\mbox{and }}\qquad &|u_\varepsilon+ (v_n)_+|^{2^*_s}\,dx \rightharpoonup \nu
= |u_\varepsilon|^{2^*_s}\,dx
+\sum_{i\in I}\nu_i \delta_{x_i}.
\end{align}
Moreover, for all~$ i \in I$,
\begin{equation}\label{CCvSi}
S^\frac{1}{2}\nu_i^\frac{1}{2^*_s}\leq \mu_i^\frac{1}{2}.
\end{equation}

Furthermore, Theorem~1.1 in~\cite{MR3866572} also gives that, defining
\begin{eqnarray*}
\mu_\infty&:=&\lim_{R\to +\infty}\limsup_{n\to +\infty}
\int_{\R^N\setminus B_R}|D^s(u_\varepsilon+(v_n)_+)(x)|^2\,dx\\{\mbox{and }}\qquad
\nu_\infty&:=&\lim_{R\to +\infty}\limsup_{n\to +\infty}
\int_{\R^N\setminus B_R}|u_\varepsilon(x)+(v_n)_+(x)|^{2^*_s}\,dx,
\end{eqnarray*}
we get that
\begin{align}\label{CCvmuinfinito}
&\limsup_{n\to +\infty}
\int_{\R^N}|D^s(u_\varepsilon+ (v_n)_+)(x)|^2\,dx=\mu(\R^N)+\mu_\infty,
\\{\mbox{and }}\qquad& \label{CCvnuinfinito}
\limsup_{n\to +\infty}
\int_{\R^N}|u_\varepsilon(x)+(v_n)_+(x)|^{2^*_s}\,dx=\nu(\R^N)+\nu_\infty,
\\
\label{CCvSinfinito} {\mbox{with }}\qquad&
S^\frac{1}{2}\nu_\infty^\frac{1}{2^*_s}\leq \mu_\infty^\frac{1}{2}.
\end{align}

Now, we claim that 
\begin{equation}\label{claimvmunu} {\mbox{if~$\mu_i\neq0$ for some~$i\in I$, then~$\nu_i\ge S^{\frac{N}{2s}}$.}}
\end{equation}
To check this, we make use
of~\eqref{selanumero0} and~\eqref{selanumero} to perform the following
preliminary computation:
\begin{equation}\label{xherg56cjhiamala43t}\begin{split}
&\langle I_\varepsilon'(v_n),w \rangle  +\langle f_\varepsilon'(u_\varepsilon),w \rangle \\&=
\iint_{\R^{2N}}\frac{(v_n(x)-v_n(y))(w(x)-w(y))}{|x-y|^{N+2s}}\,dx\,dy
\\&\qquad-\int_{\R^N}\Big[ \varepsilon h(x)\Big( (u_\varepsilon(x)+(v_n)_+(x))^q-u_\varepsilon^q\Big)
+\Big((u_\varepsilon(x)+(v_n)_+(x))^{2^*_s-1}-u_\varepsilon^{2^*_s-1}\Big) \Big]w(x)\,dx \\&\qquad+
\iint_{\R^{2N}}\frac{(u_\varepsilon(x)-u_\varepsilon(y))(w(x)-w(y))}{|x-y|^{N+2s}}\,dx\,dy\\&\qquad
-\varepsilon \int_{\R^N}h(x)u_\varepsilon^{q}(x)w(x)\,dx
-\int_{\R^N}u_\varepsilon^{2^*_s-1}(x)w(x)\,dx
\\&= \iint_{\R^{2N}}\frac{\big(u_\varepsilon(x)+v_n(x)-u_\varepsilon(y)-v_n(y)\big)(w(x)-w(y))}{|x-y|^{N+2s}}\,dx\,dy\\
&\qquad
-\int_{\R^N}\varepsilon h(x)\big(u_\varepsilon(x)+(v_n)_+(x)\big)^qw(x)\,dx
-\int_{\R^N}\big(u_\varepsilon(x)+(v_n)_+(x)\big)^{2^*_s-1}w(x)\,dx.
\end{split}\end{equation}
We now exploit this formula with~$w:=(u_\varepsilon+v_n)\phi_\delta$,
where~$\phi_\delta:=\phi_{\delta,x_i}$ for some~$\delta>0$
(recall~\eqref{definizionephi}). In this way, since~$u_\varepsilon$ is a critical point for~$f_\varepsilon$, we conclude that
\begin{equation}\label{scontrovdelta}
\begin{aligned}
0&=\lim_{n\to +\infty} 
\Big(\langle I_\varepsilon'(v_n),(u_\varepsilon+v_n)\phi_\delta \rangle  +\langle f_\varepsilon'(u_\varepsilon),(u_\varepsilon+v_n)\phi_\delta \rangle \Big) \\
&=\lim_{n\to +\infty} 
\Bigg( 
\iint_{\R^{2N}}(u_\varepsilon(y)+v_n(y))
\frac{(u_\varepsilon(x)+v_n(x)-u_\varepsilon(y)-v_n(y))(\phi_\delta(x)-\phi_\delta(y))}{|x-y|^{N+2s}}\,dx\,dy \\
&+\int_{\R^N}\phi_\delta|D^s(u_\varepsilon+v_n)|^2\,dx
-\varepsilon\int_{\R^N}h(u_\varepsilon+(v_n)_+)^{q+1}\phi_\delta\,dx
+\varepsilon\int_{\R^N}h(u_\varepsilon+(v_n)_+)^q(v_n)_-\phi_\delta\,dx \\
&-\int_{\R^N}(u_\varepsilon+(v_n)_+)^{2^*_s}\phi_\delta\,dx
+\int_{\R^N}(u_\varepsilon+(v_n)_+)^{2^*_s-1}(v_n)_-\phi_\delta\,dx
\Bigg).
\end{aligned}
\end{equation}

{F}rom~\eqref{CCvmui} and~\eqref{CCvnui} we know that 
\begin{align}\label{limitephideltaDu+v}&
\lim_{n\to+\infty}
\int_{\R^N}\phi_\delta|D^s(u_\varepsilon+(v_n)_+)|^2\,dx
=\int_{\R^N}\phi_\delta\,d\mu \\
\label{limitephideltau+v}
{\mbox{and }}\quad&\lim_{n\to+\infty}
\int_{\R^N}\phi_\delta|u_\varepsilon+(v_n)_+|^{2^*_s}\,dx
=\int_{\R^N}\phi_\delta\,d\nu.
\end{align}

Moreover, since supp$(\phi_\delta)$ is bounded, up to a subsequence we have that
\begin{equation*}
\lim_{n\to+\infty}
\int_{\R^N}h(u_\varepsilon+(v_n)_+)^{q+1}\phi_\delta\,dx
=\int_{B_\delta(x_i)}h u_\varepsilon^{q+1}\phi_\delta\,dx,
\end{equation*}
and therefore
\begin{equation}\label{limitehphideltau+v}
\lim_{\delta\to0}\lim_{n\to+\infty}
\int_{\R^N}h(u_\varepsilon+(v_n)_+)^{q+1}\phi_\delta\,dx
=0.
\end{equation}

Also, 
\begin{equation}\label{limitev+v-1}\begin{split}
&\lim_{n\to+\infty}
\int_{\R^N}h(u_\varepsilon+(v_n)_+)^q(v_n)_-\phi_\delta\,dx
=0\\ {\mbox{and }}\quad &
\lim_{n\to+\infty}
\int_{\R^N}(u_\varepsilon+(v_n)_+)^{2^*_s-1}(v_n)_-\phi_\delta\,dx
=0.\end{split}
\end{equation}

Furthermore, by the H\"older inequality,
\[
\begin{aligned}
\iint_{\R^{2N}}(u_\varepsilon(y)+v_n(y))&
\frac{(u_\varepsilon(x)+v_n(x)-u_\varepsilon(y)-v_n(y))(\phi_\delta(x)-\phi_\delta(y))}{|x-y|^{N+2s}}\,dx\,dy \\
&\leq 4[u_\varepsilon+v_n]_{s}
\left(\int_{\R^N}|u_\varepsilon+v_n|^2|D^s\phi_\delta|^2\,dx
\right)^\frac{1}{2} \\
&\leq M\left(\int_{\R^N}|u_\varepsilon+v_n|^2|D^s\phi_\delta|^2\,dx
\right)^\frac{1}{2},
\end{aligned}
\] for some~$M>0$,
and the latter quantity converges to
$$ M\left(\int_{\R^N}|u_\varepsilon|^2|D^s\phi_\delta|^2\,dx
\right)^\frac{1}{2}$$
as~$n\to+\infty$.

Additionally, from Lemma~\ref{lemmalimiteDphidelta},
\begin{equation*}
\lim_{\delta \to 0}\int_{\R^N}|u_\varepsilon|^2|D^s\phi_\delta|^2\,dx=0
\end{equation*}
and consequently
\begin{equation*}\lim_{\delta \to 0}\lim_{n\to +\infty}\iint_{\R^{2N}}(u_\varepsilon(y)+v_n(y))
\frac{(u_\varepsilon(x)+v_n(x)-u_\varepsilon(y)-v_n(y))(\phi_\delta(x)-\phi_\delta(y))}{|x-y|^{N+2s}}\,dx\,dy=0.
\end{equation*}
Therefore, combining this result with the ones in~\eqref{scontrovdelta}, 
\eqref{limitephideltau+v}, \eqref{limitehphideltau+v}
and~\eqref{limitev+v-1}, we conclude that
\begin{equation}\label{questainfor7543786qui}\begin{split}
0&=\lim_{\delta \to 0}\lim_{n\to +\infty} 
\Big(\langle I_\varepsilon'(v_n),(u_\varepsilon+v_n)\phi_\delta \rangle  +\langle f_\varepsilon'(v_n),(u_\varepsilon+v_n)\phi_\delta \rangle \Big)\\&=
\lim_{\delta\to0}\left(\lim_{n\to +\infty}
\int_{\R^N}\phi_\delta|D^s(u_\varepsilon+v_n)|^2\,dx
-\int_{\R^N}\phi_\delta\,d\nu\right).
\end{split}\end{equation}

Now, for all~$x$, $h\in\R^N$, we use the short notation
\begin{eqnarray*} &&U_h(x):=u_\varepsilon(x+h)-u_\varepsilon(x),\qquad
V_h(x):=v_n(x+h)-v_n(x)\\&&{\mbox{and}}\qquad
W_h(x):=(v_n)_+(x+h)-(v_n)_+(x)\end{eqnarray*}
and we observe that
\begin{eqnarray*}
&&|D^s(u_\varepsilon+v_n)(x)|^2 -|D^s(u_\varepsilon+(v_n)_+)(x)|^2
\\&=&\int_{\R^N}\frac{|(u_\varepsilon+v_n)(x+h)-(u_\varepsilon+v_n)(x)|^2
-|(u_\varepsilon+(v_n)_+)(x+h)-(u_\varepsilon+(v_n)_+)(x)|^2}{|h|^{N+2s}}\,dh\\&=&
\int_{\R^N}\frac{|U_h(x)+V_h(x)|^2
-|U_h(x)+W_h(x)|^2}{|h|^{N+2s}}\,dh\\&=&
\int_{\R^N}\frac{2U_h(x)\big(V_h(x)-W_h(x)\big)
+|V_h(x)|^2-|W_h(x)|^2}{|h|^{N+2s}}\,dh\\&\ge&
\int_{\R^N}\frac{2U_h(x)\big(V_h(x)-W_h(x)\big)}{|h|^{N+2s}}\,dh\\&=&
\int_{\R^N}\frac{2\big(u_\varepsilon(x+h)-u_\varepsilon(x)\big)
\big( v_n(x+h)-v_n(x) \big)}{|h|^{N+2s}}\,dh
\\&&\qquad-\int_{\R^N}\frac{2\big(u_\varepsilon(x+h)-u_\varepsilon(x)\big)
\big( (v_n)_+(x+h)-(v_n)_+(x) \big)}{|h|^{N+2s}}\,dh.
\end{eqnarray*}
The weak convergence of~$v_n$ and~$(v_n)_+$ to~$0$ in~$\mathcal{D}^{s,2}(\R^N)$ thus gives that
\begin{eqnarray*}&&
\lim_{n\to +\infty}\left(
\int_{\R^N}\phi_\delta|D^s(u_\varepsilon+v_n)|^2\,dx-
\int_{\R^N}\phi_\delta|D^s(u_\varepsilon+(v_n)_+)|^2\,dx\right)\\&\ge&
2\lim_{n\to +\infty}\Bigg[
\iint_{\R^{2N}}\frac{\big(u_\varepsilon(x+h)-u_\varepsilon(x)\big)
\big( v_n(x+h)-v_n(x) \big)}{|h|^{N+2s}}\phi_\delta(x)\,dh\,dx
\\&&\qquad\qquad-\iint_{\R^{2N}}\frac{\big(u_\varepsilon(x+h)-u_\varepsilon(x)\big)
\big( (v_n)_+(x+h)-(v_n)_+(x) \big)}{|h|^{N+2s}}\phi_\delta(x)\,dh\,dx\Bigg]\\&=&0.
\end{eqnarray*}

Using this information into~\eqref{questainfor7543786qui}, we deduce that
\begin{equation*}
0\ge
\lim_{\delta\to0}\left(\lim_{n\to +\infty}
\int_{\R^N}\phi_\delta|D^s(u_\varepsilon+(v_n)_+)|^2\,dx
-\int_{\R^N}\phi_\delta\,d\nu\right).
\end{equation*}
Accordingly, this and~\eqref{limitephideltaDu+v}
entail that
\begin{equation*}
0\ge
\lim_{\delta\to0}\left(\int_{\R^N}\phi_\delta\,d\mu
-\int_{\R^N}\phi_\delta\,d\nu\right).
\end{equation*}
As a result, recalling~\eqref{CCvmui}
and~\eqref{CCvnui}, we have that~$
0\ge\mu_i-\nu_i$. 
This and~\eqref{CCvSi}
give the desired claim in~\eqref{claimvmunu}.

Similarly, one can show that
\begin{equation}\label{claimvmunu2} {\mbox{if~$\mu_\infty\neq0$, then~$\nu_\infty\ge S^{\frac{N}{2s}}$.}}
\end{equation}
The strategy to prove~\eqref{claimvmunu2} is identical to the one used
to establish~\eqref{claimvmunu}, but using in~\eqref{xherg56cjhiamala43t}
the test function~$w:=(u_\varepsilon+v_n)\phi_R$ with~$
\phi_R:=1-\phi_{R,0}$, for some~$R>0$ to be sent to~$+\infty$.

Now we claim that
\begin{equation}\label{09876543qwertyuio}
{\mbox{$\nu_i=0$, for all~$i\in I$, and~$\nu_\infty=0$.}}
\end{equation}
For this, we exploit~\eqref{CCM} and~\eqref{CCMBIS} to see that
\begin{equation}\label{ijtoy5847854768467476hfuk}\begin{split}
&\frac{s}{N} S^{\frac{N}{2s}}>c=\lim_{n\to+\infty}\left( I_\varepsilon(v_n)-\frac12 \langle
I'_\varepsilon(v_n), v_n\rangle\right)\\&=\lim_{n\to+\infty}\Bigg[
-\frac{\varepsilon}{q+1}\int_{\R^N}h(x)
\Big((u_\varepsilon(x)+(v_n)_+(x))^{q+1}-u_\varepsilon^{q+1}(x)
-(q+1)u_\varepsilon^q(x) (v_n)_+(x)\Big)\,dx
\\&\qquad-\frac{1}{2^*_s}\int_{\R^N}\Big((u_\varepsilon(x)+(v_n)_+(x))^{2^*_s}-u_\varepsilon^{2^*_s}(x)
-2^*_s u_\varepsilon^{2^*_s-1}(x) (v_n)_+(x)\Big)\,dx
\\&\qquad+\frac12\int_{\R^N}\Big[ \varepsilon h(x)\Big( (u_\varepsilon(x)+(v_n)_+(x))^q-u_\varepsilon^q(x)\Big)
+\Big((u_\varepsilon(x)+(v_n)_+(x))^{2^*_s-1}-u_\varepsilon^{2^*_s-1}(x)\Big) \Big]v_n(x)\,dx\Bigg]\\
&\ge\lim_{n\to+\infty}\Bigg[
-\frac{\varepsilon}{q+1}\int_{\R^N}h(x)
\Big((u_\varepsilon(x)+(v_n)_+(x))^{q+1}-u_\varepsilon^{q+1}(x)
-(q+1)u_\varepsilon^q(x) (v_n)_+(x)\Big)\,dx
\\&\qquad-\frac{1}{2^*_s}\int_{\R^N}\Big((u_\varepsilon(x)+(v_n)_+(x))^{2^*_s}-u_\varepsilon^{2^*_s}(x)\Big)\,dx
\\&\qquad+\frac12\int_{\R^N}\Big[ \varepsilon h(x)\Big( (u_\varepsilon(x)+(v_n)_+(x))^q-u_\varepsilon^q(x)\Big)
+\Big((u_\varepsilon(x)+(v_n)_+(x))^{2^*_s-1}-u_\varepsilon^{2^*_s-1}(x)\Big) \Big]v_n(x)\,dx\Bigg]
.
\end{split}\end{equation}

Now we observe that
\begin{equation}\label{eqfinale1}
\lim_{n\to +\infty}\int_{\R^N}h(u_\varepsilon+(v_n)_+)^{q+1}\,dx
=\int_{\R^N}h u_\varepsilon^{q+1}\,dx.
\end{equation}
To show this claim, we take~$R>0$ and notice that,
from the convergence of~$v_n$ and the H\"older inequality,
\[
\begin{aligned}
\lim_{n\to +\infty}&\left|
\int_{\R^N}h(u_\varepsilon+(v_n)_+)^{q+1}\,dx
-\int_{\R^N}h u_\varepsilon^{q+1}\,dx\right| \\
&=\lim_{n\to +\infty}\left[\left|
\int_{B_R}h(u_\varepsilon+(v_n)_+)^{q+1}\,dx
-\int_{\R^N}h u_\varepsilon^{q+1}\,dx\right|
+\left|\int_{\R^N\setminus B_R}h(u_\varepsilon+(v_n)_+)^{q+1}\,dx
\right|\right] \\
&\leq \lim_{n\to +\infty}\Bigg[\left|
\int_{B_R}h(u_\varepsilon+(v_n)_+)^{q+1}\,dx
-\int_{\R^N}h u_\varepsilon^{q+1}\,dx\right|\\
&\qquad\qquad\qquad+
\left(\int_{\R^N\setminus B_R}|h|^\frac{2^*_s}{2^*_s-q-1} \right)^\frac{2^*_s-q-1}{2^*_s}
\left(\int_{\R^N\setminus B_R}|u_\varepsilon+(v_n)_+|^{2^*_s} \right)^\frac{q+1}{2^*_s}\Bigg] \\
&\leq \left|\int_{B_R}hu_\varepsilon^{q+1}\,dx
-\int_{\R^N}h u_\varepsilon^{q+1}\,dx\right|
+C\left(\int_{\R^N\setminus B_R}|h|^\frac{2^*_s}{2^*_s-q-1} \right)^\frac{2^*_s-q-1}{2^*_s}\\
&=\left|\int_{\R^N\setminus B_R}h u_\varepsilon^{q+1}\,dx\right|
+C\left(\int_{\R^N\setminus B_R}|h|^\frac{2^*_s}{2^*_s-q-1} \right)^\frac{2^*_s-q-1}{2^*_s} ,
\end{aligned}
\] which is infinitesimal
as~$R\to +\infty$, thus establishing~\eqref{eqfinale1}.

By plugging~\eqref{eqfinale1}
into~\eqref{ijtoy5847854768467476hfuk}, we get that
\begin{equation*}\begin{split}
&\frac{s}{N} S^{\frac{N}{2s}}>
\lim_{n\to+\infty}\Bigg[
\varepsilon\int_{\R^N}h(x)
u_\varepsilon^q(x) (v_n)_+(x)\,dx
-\frac{1}{2^*_s}\int_{\R^N}\Big((u_\varepsilon(x)+(v_n)_+(x))^{2^*_s}-u_\varepsilon^{2^*_s}(x)\Big)\,dx
\\&+\frac12\int_{\R^N}\Big[ \varepsilon h(x)\Big( (u_\varepsilon(x)+(v_n)_+(x))^q-u_\varepsilon^q(x)\Big)
+\Big((u_\varepsilon(x)+(v_n)_+(x))^{2^*_s-1}-u_\varepsilon^{2^*_s-1}(x)\Big) \Big](v_n)_+(x)\,dx\Bigg]
.
\end{split}\end{equation*}
In light of the weak convergence of~$(v_n)_+$, we also see that
\begin{equation}\label{qwertyuilkjhgfdszxcvbnm098765432}
\lim_{n\to+\infty}\int_{\R^N}h(x)
u_\varepsilon^q(x) (v_n)_+(x)\,dx=0,\end{equation}
and therefore
\begin{equation}\label{ijtoy5847854768467476hfuk2}\begin{split}
&\frac{s}{N} S^{\frac{N}{2s}}>
\lim_{n\to+\infty}\Bigg[
-\frac{1}{2^*_s}\int_{\R^N}\Big((u_\varepsilon(x)+(v_n)_+(x))^{2^*_s}-u_\varepsilon^{2^*_s}(x)\Big)\,dx
\\&\qquad+\frac12\int_{\R^N}\Big[ \varepsilon h(x)
(u_\varepsilon(x)+(v_n)_+(x))^q
+\Big((u_\varepsilon(x)+(v_n)_+(x))^{2^*_s-1}-u_\varepsilon^{2^*_s-1(x)}\Big) \Big](v_n)_+(x)\,dx\Bigg]
.
\end{split}\end{equation}

Furthermore, we claim that
\begin{equation}\label{f4ity48gkgfewugjt098765}
\lim_{n\to+\infty}\int_{\R^N} h(x)
(u_\varepsilon(x)+(v_n)_+(x))^q(v_n)_+(x)\,dx=0.
\end{equation}
Indeed, let~$R>0$ and notice that
the locally strong convergence of~$(v_n)_+$ and the dominated convergence
theorem give that
$$\lim_{n\to+\infty}\int_{B_R} h(x)
\Big((u_\varepsilon(x)+(v_n)_+(x))^q-u_\varepsilon^q(x) \Big)(v_n)_+(x)\,dx
=0.$$
Hence, thanks to~\eqref{qwertyuilkjhgfdszxcvbnm098765432},
and using the H\"older inequality with exponents~$\frac{2^*_s}{2^*_s-q-1}$,
$\frac{2^*_s}{q}$ and~$2^*_s$,
\begin{eqnarray*}&&
\lim_{n\to+\infty}\left|\int_{\R^N} h(x)
(u_\varepsilon(x)+(v_n)_+(x))^q(v_n)_+(x)\,dx\right|
\\&&\qquad=\lim_{n\to+\infty}\left|\int_{\R^N} h(x)\Big(
(u_\varepsilon(x)+(v_n)_+(x))^q-u_\varepsilon^q(x) \Big)(v_n)_+(x)\,dx
\right|\\&&\qquad
=\lim_{n\to+\infty}\left|\int_{\R^N\setminus B_R} h(x)\Big(
(u_\varepsilon(x)+(v_n)_+(x))^q-u_\varepsilon^q(x) \Big)(v_n)_+(x)\,dx
\right|\\&&\qquad\le C
\left(\int_{\R^N\setminus B_R} |h(x)|^{\frac{2^*_s}{2^*_s-q-1}}
\,dx\right)^{\frac{2^*_s-q-1}{2^*_s}}\\&&\qquad\qquad\qquad\cdot
\left(\int_{\R^N}
\big(u_\varepsilon(x)+(v_n)_+(x) \big)^{2^*_s}\,dx\right)^{\frac{q}{2^*_s}}
\left(\int_{\R^N}|(v_n)_+(x)|^{2^*_s}\,dx\right)^{\frac1{2^*_s}}
\\&&\qquad \le C\left(\int_{\R^N\setminus B_R} |h(x)|^{\frac{2^*_s}{2^*_s-q-1}}
\,dx\right)^{\frac{2^*_s-q-1}{2^*_s}},
\end{eqnarray*}
up to renaming~$C>0$. Then, the claim in~\eqref{f4ity48gkgfewugjt098765}
follows by sending~$R\to+\infty$.

Exploiting~\eqref{f4ity48gkgfewugjt098765} into~\eqref{ijtoy5847854768467476hfuk2}, we conclude that
\begin{equation}\label{ijtoy5847854768467476hfuk3}\begin{split}
\frac{s}{N} S^{\frac{N}{2s}}&>
\lim_{n\to+\infty}\Bigg[
-\frac{1}{2^*_s}\int_{\R^N}\Big((u_\varepsilon(x)+(v_n)_+(x))^{2^*_s}-u_\varepsilon^{2^*_s}(x)\Big)\,dx
\\&\qquad+\frac12\int_{\R^N}
\Big((u_\varepsilon(x)+(v_n)_+(x))^{2^*_s-1}-u_\varepsilon^{2^*_s-1}(x)\Big) (v_n)_+(x)\,dx\Bigg]\\&=
\lim_{n\to+\infty}\Bigg[\left(\frac12-\frac{1}{2^*_s}\right)
\int_{\R^N}(u_\varepsilon(x)+(v_n)_+(x))^{2^*_s}\,dx
+\frac{1}{2^*_s}\int_{\R^N}u_\varepsilon^{2^*_s}(x)\,dx\\&\qquad
-\frac12\int_{\R^N}(u_\varepsilon(x)+(v_n)_+(x))^{2^*_s-1}
u_\varepsilon(x)\,dx-\frac12\int_{\R^N}u_\varepsilon^{2^*_s-1}(x) (v_n)_+(x)\,dx\Bigg]
.\end{split}\end{equation}
We now point out that
\begin{equation}\label{speriamobenechicelofafareeg}
\lim_{n\to+\infty}\int_{\R^N}(u_\varepsilon(x)+(v_n)_+(x))^{2^*_s-1}
u_\varepsilon(x)\,dx=
\int_{\R^N}u_\varepsilon^{2^*_s}(x)\,dx.
\end{equation}
To check this, we pick~$R>0$ and use the locally strong convergence of~$(v_n)_+$ and the dominated convergence
theorem to see that
$$ \lim_{n\to+\infty}\int_{B_R}(u_\varepsilon(x)+(v_n)_+(x))^{2^*_s-1}
u_\varepsilon(x)\,dx=\int_{B_R}u_\varepsilon^{2^*_s}(x)\,dx$$and consequently
\begin{eqnarray*}&&
\lim_{n\to+\infty}\left|\int_{\R^N}(u_\varepsilon(x)+(v_n)_+(x))^{2^*_s-1}
u_\varepsilon(x)\,dx-\int_{\R^N}u_\varepsilon^{2^*_s}(x)\,dx\right|
\\&&\qquad=\lim_{n\to+\infty}\left|\int_{\R^N\setminus B_R}(u_\varepsilon(x)+(v_n)_+(x))^{2^*_s-1}
u_\varepsilon(x)\,dx-\int_{\R^N\setminus B_R}u_\varepsilon^{2^*_s}(x)\,dx\right|\\
&&\qquad\le
C\lim_{n\to+\infty}\left(
\left(\int_{\R^N} |(v_n)_+(x)|^{2^*_s}\,dx\right)^{\frac{2^*_s-1}{2^*_s}}
\left(\int_{\R^N\setminus B_R}u_\varepsilon^{2^*_s}(x)\,dx\right)^{\frac1{2^*_s}}
+
\int_{\R^N\setminus B_R}u_\varepsilon^{2^*_s}(x)\,dx
\right)\\&&\qquad\le C\left(
\left(\int_{\R^N\setminus B_R}u_\varepsilon^{2^*_s}(x)\,dx\right)^{\frac1{2^*_s}}
+
\int_{\R^N\setminus B_R}u_\varepsilon^{2^*_s}(x)\,dx
\right),
\end{eqnarray*}
up to renaming~$C>0$, and thus,
by sending~$R\to+\infty$,
we establish the desired claim in~\eqref{speriamobenechicelofafareeg}.

Moreover, the weak convergence of~$(v_n)_+$ entails that
\begin{equation}\label{4tbhdsluypuitsgetrire}
\lim_{n\to+\infty}\int_{\R^N}u_\varepsilon^{2^*_s-1}(x) (v_n)_+(x)\,dx=0.
\end{equation}
Plugging this and~\eqref{speriamobenechicelofafareeg} into~\eqref{ijtoy5847854768467476hfuk3}, we obtain that
\begin{equation*}\begin{split}
\frac{s}{N} S^{\frac{N}{2s}}&>\left(\frac12-\frac{1}{2^*_s}\right)
\lim_{n\to+\infty}\Bigg[
\int_{\R^N}(u_\varepsilon(x)+(v_n)_+(x))^{2^*_s}\,dx
-\int_{\R^N}u_\varepsilon^{2^*_s}(x)\,dx\Bigg]
.\end{split}\end{equation*}
{F}rom this, \eqref{CCvnui} and~\eqref{CCvnuinfinito} we conclude that
\begin{equation*}
\frac{s}{N} S^{\frac{N}{2s}}>\left(\frac12-\frac{1}{2^*_s}\right)\left(
\sum_{i\in I}\nu_i+\nu_\infty\right)=\frac{s}N
\left(\sum_{{i\in I}\atop{\nu_i\ne0}}\nu_i+\nu_\infty\right)
.\end{equation*}
This, \eqref{CCvSi}, \eqref{CCvSinfinito},
\eqref{claimvmunu} and~\eqref{claimvmunu2} entail that~$\nu_i=0$
for all~$i\in I$ and that~$\nu_\infty=0$. The proof of~\eqref{09876543qwertyuio} is thereby complete.

{F}rom~\eqref{CCvnui}, \eqref{CCvSinfinito}
and~\eqref{09876543qwertyuio}, we gather that
\begin{equation}\label{dijweoyfgeu4ty8684tughnnvbcjhsvdjwe}
\lim_{n\to+\infty}
\int_{\R^N}\big(u_\varepsilon(x)+(v_n)_+(x)\big)^{2^*_s}\,dx
=\int_{\R^N}u_\varepsilon^{2^*_s}(x)\,dx.\end{equation}

Now, in light of~\eqref{CCMBIS},
\begin{eqnarray*}
0&=&\lim_{n\to+\infty}\langle I'_\varepsilon(v_n),v_n\rangle\\
&=& \lim_{n\to+\infty}\Bigg[\int_{\R^{N}}|D^s v_n(x)|^2\,dx
\\&&-\int_{\R^N}\Big[ \varepsilon h(x)\Big( (u_\varepsilon(x)+(v_n)_+(x))^q-u_\varepsilon^q\Big)
+\Big((u_\varepsilon(x)+(v_n)_+(x))^{2^*_s-1}-u_\varepsilon^{2^*_s-1}\Big) \Big](v_n)_+(x)\,dx\Bigg].
\end{eqnarray*}
Accordingly, exploiting~\eqref{qwertyuilkjhgfdszxcvbnm098765432},
\eqref{f4ity48gkgfewugjt098765} and~\eqref{4tbhdsluypuitsgetrire},
\begin{eqnarray*}
0&=& \lim_{n\to+\infty}\int_{\R^{N}}|D^s v_n(x)|^2\,dx
-\int_{\R^N}(u_\varepsilon(x)+(v_n)_+(x))^{2^*_s-1}(v_n)_+(x)\,dx\\
&=&\lim_{n\to+\infty}\Bigg[\int_{\R^{N}}|D^s v_n(x)|^2\,dx
-\int_{\R^N}(u_\varepsilon(x)+(v_n)_+(x))^{2^*_s}\,dx
\\&&\qquad\qquad+\int_{\R^N}(u_\varepsilon(x)+(v_n)_+(x))^{2^*_s-1}u_\varepsilon(x)\,dx\Bigg]
.\end{eqnarray*}
We can now use~\eqref{speriamobenechicelofafareeg}
and~\eqref{dijweoyfgeu4ty8684tughnnvbcjhsvdjwe} to conclude that
\begin{eqnarray*}
0&=&\lim_{n\to+\infty}\Bigg[\int_{\R^{N}}|D^s v_n(x)|^2\,dx
-\int_{\R^N}u_\varepsilon^{2^*_s}(x)\,dx
+\int_{\R^N}u_\varepsilon^{2^*_s}\,dx\Bigg] \\&=&
\lim_{n\to+\infty}\int_{\R^{N}}|D^s v_n(x)|^2\,dx,
\end{eqnarray*}
which completes the proof of Proposition~\ref{PSIepsilon}. 
\end{proof}

Now, in order to prove the existence of a second solution, we consider 
solutions~$u\in\mathcal{D}^{s,2}(\R^N)$ to the equation
\[
(-\Delta)^s u=u^{2^*_s-1}.
\]
Notice that this equation has a manifold~$Z$ of positive solutions
\[
Z:=\left\{z_{\mu,\xi}(x)=\mu^{-\frac{N-2s}{2}}z\left(\frac{x-\xi}{\mu}\right){\mbox{ with }}\, \mu>0, \xi\in \R^N \right\},
\]
where
\begin{equation}\label{definizionez}
z(x):=\frac{C_{N,s}}{(1+|x|^2)^\frac{N-2s}{2}}
\end{equation}
for a suitable~$C_{N,s}>0$.

Furthermore, we set
\begin{equation}\label{degtfte093654BIS}\begin{split}
\widetilde{G}(x,v)&:=\int_0^v \widetilde{g}(x,t)dt
\\
{\mbox{and }} \quad\widetilde{g}(x,t)&:=
\begin{cases}
(u_\varepsilon +t)^q-u_\varepsilon^q  & \mbox{if } t\geq 0,
\\
0   & \mbox{if } t< 0.
\end{cases}
\end{split}\end{equation}
We observe that 
\begin{equation}\label{eq6.5.2}
\widetilde{G}(x,0)=0
\end{equation}
and also, since~$\widetilde{g}(x,t)\geq 0$ for any~$t\in \R$, that
\begin{equation}\label{eq6.5.3}
\widetilde{G}(x,v)\geq 0 \mbox{ for any } v\geq 0.
\end{equation}

Moreover, we write the ball~$B$ given by~$(h_1)$
as~$B_{\mu_0}(\xi)$ for some~$\xi \in \R^N$ and~$\mu_0>0$.
In this way,
\begin{equation}\label{djiweoty8e4otyvbc5674t657485748748hgfjdfghddh}
{\mbox{$h>0$ in~$B_{\mu_0}(\xi)$.}}
\end{equation}
We point out that~$\xi$ and~$\mu_0$ are now fixed once and for all.

We pick a cut-off function~$\phi\in C_c^\infty(B_{\mu_0}(\xi),[0,1])$ with
\begin{equation}\label{eq6.5.4}
\phi(x)=1 \mbox{ for any } x\in B_{\mu_0/2}(\xi).
\end{equation}
Also, for any~$\mu>0$, recalling~\eqref{definizionez}, we set
\begin{equation}\label{eq6.5.5}
z_{\mu,\xi}(x):=\mu^{-\frac{N-2s}{2}}z\left(\frac{x-\xi}{\mu}\right).
\end{equation}
By scaling,
\begin{equation}\label{eq6.5.7}
[z_{\mu,\xi}]_{s}^2=\|z_{\mu,\xi}\|_{L^{2^*_s}(\R^N)}^{2^*_s}=S^\frac{N}{2s}.
\end{equation}
In addition, by Proposition~21 in~\cite{MR3271254},
\begin{equation}\label{eq6.5.8}
[\phi z_{\mu,\xi}]_{s}^2\leq S^\frac{N}{2s}+C\mu^{N-2s}
\end{equation}
and,
by Lemma~6.5.1 in~\cite{MR3617721},
\begin{equation}\label{lemma6.5.1}
\int_{\R^N} \big(1-\phi^{2^*_s}(x)\big) z_{\mu,\xi}^{2^*_s}(x)\,dx\leq C\mu^N,
\end{equation}
for some~$C>0$.

The next result states that we can concentrate the mass
near the set where~$h$ is positive to obtain a positive integral.

\begin{lem}\label{eq6.5.9}
For any~$\mu>0$ and~$t\geq 0$, we have that
\begin{equation}
\int_{\R^N}h(x)\,\widetilde{G}(x,t\phi(x)z_{\mu,\xi}(x))\,dx\geq 0.
\end{equation}
\end{lem}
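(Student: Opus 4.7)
The plan is to split the domain of integration according to whether $x$ lies in the support of $\phi$ and check that the integrand has a favorable sign on each piece. Since $\phi \in C_c^\infty(B_{\mu_0}(\xi),[0,1])$, the argument $t\phi(x) z_{\mu,\xi}(x)$ is identically zero whenever $x \notin B_{\mu_0}(\xi)$. By~\eqref{eq6.5.2}, this forces $\widetilde{G}(x, t\phi(x) z_{\mu,\xi}(x)) = 0$ off $B_{\mu_0}(\xi)$, and therefore the integral reduces to one over $B_{\mu_0}(\xi)$.

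On $B_{\mu_0}(\xi)$ I would use the two remaining sign facts. First, by~\eqref{djiweoty8e4otyvbc5674t657485748748hgfjdfghddh} the function $h$ is strictly positive there, since the ball $B$ from assumption $(h_1)$ was chosen to be $B_{\mu_0}(\xi)$. Second, the argument $t\phi(x) z_{\mu,\xi}(x)$ is nonnegative for every $x$ (because $t\geq 0$, $\phi\geq 0$, and $z_{\mu,\xi}\geq 0$ in view of~\eqref{definizionez}–\eqref{eq6.5.5}), so~\eqref{eq6.5.3} yields $\widetilde{G}(x, t\phi(x) z_{\mu,\xi}(x)) \geq 0$ pointwise. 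Multiplying these two nonnegative quantities gives a nonnegative integrand on $B_{\mu_0}(\xi)$, and integration preserves the sign.

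There is no real obstacle here: the statement is a direct consequence of the sign properties that were recorded in~\eqref{eq6.5.2} and~\eqref{eq6.5.3} together with the choice of the cut-off $\phi$ supported where $h>0$. The only thing to be mindful of is to invoke~\eqref{eq6.5.2} to discard the complement of $\mathrm{supp}(\phi)$ rather than relying on $h\geq 0$ globally, since $h$ is only required to be positive on $B$ by $(h_1)$ and may change sign elsewhere.
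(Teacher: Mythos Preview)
Your proof is correct and follows essentially the same approach as the paper: reduce the integral to $B_{\mu_0}(\xi)$ using~\eqref{eq6.5.2} and the support of~$\phi$, then conclude from~$(h_1)$ and~\eqref{eq6.5.3}. Your closing remark about why one must invoke~\eqref{eq6.5.2} rather than a global sign assumption on~$h$ is a nice clarification but not an essential difference.
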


\begin{proof}
Since~$\phi(x)=0$ if~$x\in \R^N\setminus B_{\mu_0}(\xi)$, using~\eqref{eq6.5.2} we have that~$
\widetilde{G}(x,t\phi(x)z_{\mu,\xi}(x))=0$
for any~$x\in \R^N\setminus B_{\mu_0}(\xi)$. 

As a result,
\[
\int_{\R^N}h(x)\,\widetilde{G}(x,t\phi(x)z_{\mu,\xi}(x))\,dx
=\int_{B_{\mu_0}(\xi)}h(x)\,\widetilde{G}(x,t\phi(x)z_{\mu,\xi}(x))\,dx.
\]
{F}rom~$(h_1)$ and~\eqref{eq6.5.3} we obtain the desired result.
\end{proof}

Now we show that the functional~$I_\varepsilon$ satisfies the 
geometry of the Mountain Pass Theorem. To this end, we first point out that Proposition~\ref{propvminimolocale} gives that~$0$ is a local minimum for~$I_\varepsilon$. 

Now, we show that the path induced by the function~$\phi z_{\mu,\xi}$ attains negative values. In order to do this,
we introduce the auxiliary functional
\begin{equation}\label{6.5.10}\begin{split}
&I_\varepsilon^\star(v):=\frac{1}{2}[v]_{s}^2
-\int_{\R^N}G^\star (x,v)\,dx,
\\{\mbox{where }} \quad &
G^\star(x,v):=\int_0^v g^\star(x,t)\,dt\\ {\mbox{and }}\quad&
g^\star(x,v):=
\begin{cases}
(u_\varepsilon +t)^{2^*_s-1}-u_\varepsilon^{2^*_s-1}  &\mbox{if }
t\geq 0,  \\
0    &\mbox{if } t< 0.
\end{cases}
\end{split}\end{equation}
We have the following result:

\begin{lem}\label{lemma6.5.3BIS}
There exists~$\mu_1\in (0,\mu_0)$ such that
\[
\lim_{t\to +\infty} \sup_{\mu\in (0,\mu_1)}
I^\star_\varepsilon(t\phi z_{\mu,\xi})=-\infty.
\]
In particular, there exists~$T_1>0$ such that
\begin{equation}\label{eq6.5.12BIS}
\sup_{\mu\in (0,\mu_1)}I^\star_\varepsilon(t\phi z_{\mu,\xi})\leq 0
\end{equation}
for any~$t\geq T_1$.
\end{lem}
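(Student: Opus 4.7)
The plan is to write $I^\star_\varepsilon(t\phi z_{\mu,\xi})$ explicitly and then produce an upper bound of the form $A t^2 - B t^{2^*_s} + C t$ with constants independent of $\mu$ for $\mu$ small, after which the conclusion is immediate from $2^*_s > 2$.

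First, since $\phi, z_{\mu,\xi} \geq 0$, the function $t\phi z_{\mu,\xi}$ is nonnegative, so directly from~\eqref{6.5.10}
\[
G^\star(x,t\phi z_{\mu,\xi}) = \frac{1}{2^*_s}\Big((u_\varepsilon + t\phi z_{\mu,\xi})^{2^*_s}-u_\varepsilon^{2^*_s}\Big) - u_\varepsilon^{2^*_s-1}\, t\phi z_{\mu,\xi}.
\]
Applying Lemma~\ref{lemma6.5.4} with $p=2^*_s>1$, $a=u_\varepsilon$ and $b=t\phi z_{\mu,\xi}$ yields
\[
(u_\varepsilon + t\phi z_{\mu,\xi})^{2^*_s} \geq u_\varepsilon^{2^*_s} + t^{2^*_s}(\phi z_{\mu,\xi})^{2^*_s},
\]
so
\[
I^\star_\varepsilon(t\phi z_{\mu,\xi}) \leq \frac{t^2}{2}[\phi z_{\mu,\xi}]_s^2 - \frac{t^{2^*_s}}{2^*_s}\int_{\R^N}\phi^{2^*_s}(x)z_{\mu,\xi}^{2^*_s}(x)\,dx + t\int_{\R^N} u_\varepsilon^{2^*_s-1}(x)\phi(x) z_{\mu,\xi}(x)\,dx.
\]

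Next, I would control each term uniformly in $\mu\in(0,\mu_0)$ using the estimates already collected. The bound~\eqref{eq6.5.8} gives $[\phi z_{\mu,\xi}]_s^2 \leq S^{N/(2s)} + C\mu^{N-2s}$, while~\eqref{eq6.5.7} and~\eqref{lemma6.5.1} give
\[
\int_{\R^N}\phi^{2^*_s}(x) z_{\mu,\xi}^{2^*_s}(x)\,dx = \|z_{\mu,\xi}\|_{L^{2^*_s}(\R^N)}^{2^*_s} - \int_{\R^N}\big(1-\phi^{2^*_s}(x)\big)z_{\mu,\xi}^{2^*_s}(x)\,dx \geq S^{N/(2s)} - C\mu^N.
\]
For the linear term, the H\"older inequality together with $\|\phi z_{\mu,\xi}\|_{L^{2^*_s}(\R^N)} \leq \|z_{\mu,\xi}\|_{L^{2^*_s}(\R^N)} = S^{(N-2s)/(4s)}$ and $u_\varepsilon \in L^{2^*_s}(\R^N)$ (recall Proposition~\ref{prop:primasol}) gives
\[
\int_{\R^N} u_\varepsilon^{2^*_s-1}(x)\phi(x) z_{\mu,\xi}(x)\,dx \leq \|u_\varepsilon\|_{L^{2^*_s}(\R^N)}^{2^*_s-1}\|\phi z_{\mu,\xi}\|_{L^{2^*_s}(\R^N)} \leq C_0,
\]
with $C_0$ independent of $\mu$.

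Choosing $\mu_1 \in (0,\mu_0)$ so small that $C\mu_1^{N-2s} \leq S^{N/(2s)}$ and $C\mu_1^N \leq \frac12 S^{N/(2s)}$, we obtain, for every $\mu\in (0,\mu_1)$ and every $t\geq 0$,
\[
I^\star_\varepsilon(t\phi z_{\mu,\xi}) \leq t^2\, S^{N/(2s)} - \frac{t^{2^*_s}}{2\cdot 2^*_s}\, S^{N/(2s)} + C_0\, t,
\]
where the right-hand side is a polynomial in $t$ independent of $\mu$ whose leading term has the negative coefficient $-\frac{S^{N/(2s)}}{2\cdot 2^*_s}$ and degree $2^*_s > 2$. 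Therefore the right-hand side tends to $-\infty$ as $t\to+\infty$, which gives the uniform limit $\sup_{\mu\in(0,\mu_1)} I^\star_\varepsilon(t\phi z_{\mu,\xi})\to-\infty$, and in particular the existence of $T_1>0$ realizing~\eqref{eq6.5.12BIS}. There is no real obstacle here: the only subtlety is choosing $\mu_1$ after the estimates~\eqref{eq6.5.8} and~\eqref{lemma6.5.1} so that the uniform bound survives and the $t^{2^*_s}$ term truly dominates.
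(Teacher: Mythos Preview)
Your proof is correct and follows essentially the same approach as the paper: both compute $G^\star$ explicitly, use the superadditivity $(a+b)^{2^*_s}\ge a^{2^*_s}+b^{2^*_s}$, bound the seminorm via~\eqref{eq6.5.8} and the $L^{2^*_s}$ mass via~\eqref{eq6.5.7}--\eqref{lemma6.5.1}, and then conclude from a $\mu$-independent polynomial upper bound with dominant term $-ct^{2^*_s}$. The only cosmetic difference is that the paper controls the cross term $\int u_\varepsilon^{2^*_s-1}\phi z_{\mu,\xi}$ by Young's inequality (yielding a $C(1+t)$ term) whereas you use H\"older with~\eqref{eq6.5.7}; both give a $\mu$-uniform bound and the argument is otherwise identical.
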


\begin{proof}
We observe that, if~$v\geq 0$,
\begin{equation}\label{eq6.5.13}
G^\star (x,v)=\int_0^v \Big( (u_\varepsilon+t)^{2^*_s-1}-u_\varepsilon^{2^*_s-1}\Big)\,dt
=\frac{1}{2^*_s}\Big((u_\varepsilon+v)^{2^*_s}-u_\varepsilon^{2^*_s}\Big)
-u_\varepsilon^{2^*_s-1}v.
\end{equation}
Moreover, by the Young inequality,
\[
u_\varepsilon^{2^*_s-1}(x)\phi(x) z_{\mu,\xi}(x)
\leq u_\varepsilon^{2^*_s}(x)+z_{\mu,\xi}^{2^*_s}(x).
\]
{F}rom this and~\eqref{eq6.5.13} we obtain that
\[
\begin{aligned}
G^\star(x,t\phi z_{\mu,\xi})
&=\frac{1}{2^*_s}\Big((u_\varepsilon+t\phi z_{\mu,\xi})^{2^*_s}-u_\varepsilon^{2^*_s}\Big)
-u_\varepsilon^{2^*_s-1}t\phi z_{\mu,\xi}  \\
&\geq \frac{1}{2^*_s}\Big((t\phi z_{\mu,\xi})^{2^*_s}-u_\varepsilon^{2^*_s}\Big)
-tu_\varepsilon^{2^*_s}-tz_{\mu,\xi}^{2^*_s}.
\end{aligned}
\]
Thus, integrating over~$\R^N$ and recalling~$(h_0)$ and~\eqref{eq6.5.7}, we gather that
\begin{equation}\label{eq6.5.14}
\int_{\R^N}G^\star(x,t\phi z_{\mu,\xi})\,dx
\geq \frac{t^{2^*_s}}{2^*_s}\int_{\R^N}\phi^{2^*_s} z_{\mu,\xi}^{2^*_s}\,dx-C -Ct
\end{equation}
for some~$C>0$.

Furthermore, from~\eqref{eq6.5.7} and~\eqref{lemma6.5.1} we deduce that there exists~$\mu_1\in(0,1)$
such that if~$\mu\in (0,\mu_1)$ then
\[
\int_{\R^N}\phi^{2^*_s} z_{\mu,\xi}^{2^*_s}\,dx
\geq \int_{\R^N} z_{\mu,\xi}^{2^*_s}\,dx
-C\mu^N=S^\frac{N}{2s}-C\mu^N
\ge
\frac{1}{2}S^\frac{N}{2s}.
\]
Inserting this in~\eqref{eq6.5.14} we obtain that, if~$\mu \in (0,\mu_1)$, 
\[
\int_{\R^N}G^\star(x,t\phi z_{\mu,\xi})\,dx
\geq \frac{t^{2^*_s}}{2\cdot2^*_s}S^\frac{N}{2s}-C -Ct.
\]
This and~\eqref{6.5.10} give that
\[
I_\varepsilon^\star(t\phi z_{\mu,\xi})
\leq \frac{t^2}{2}[\phi z_{\mu,\xi}]_{s}+C(1+t)
-\frac{t^{2^*_s}}{2\cdot2^*_s}S^\frac{N}{2s}.
\]
Hence, recalling~\eqref{eq6.5.8}, we find that
\[
I_\varepsilon^\star(t\phi z_{\mu,\xi})
\leq C(1+t+t^2)-\frac{t^{2^*_s}}{2\cdot2^*_s}S^\frac{N}{2s},
\]
up to renaming constants, for any~$\mu\in (0,\mu_1)$.
Since~$2^*_s>2$, we obtain the desired result.
\end{proof}

As a consequence of Lemma~\ref{lemma6.5.3BIS} we have the following:

\begin{cor}\label{lemma6.5.3}
There exists~$\mu_1\in (0,\mu_0)$ such that
\[
\lim_{t\to +\infty} \sup_{\mu\in (0,\mu_1)}
I_\varepsilon(t\phi z_{\mu,\xi})=-\infty.
\]
In particular, there exists~$T_1>0$ such that
\begin{equation}\label{eq6.5.12}
\sup_{\mu\in (0,\mu_1)}I_\varepsilon(t\phi z_{\mu,\xi})\leq 0
\end{equation}
for any~$t\geq T_1$.
\end{cor}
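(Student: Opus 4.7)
The plan is to reduce the claim to Lemma~\ref{lemma6.5.3BIS} by showing that, along the path $v=t\phi z_{\mu,\xi}$, the functional $I_\varepsilon$ is pointwise bounded above by the auxiliary functional~$I_\varepsilon^\star$. Since Lemma~\ref{lemma6.5.3BIS} already yields $I_\varepsilon^\star(t\phi z_{\mu,\xi})\to-\infty$ uniformly in $\mu\in(0,\mu_1)$, the corollary will follow immediately.

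First, I would unwind the definitions in~\eqref{degtfte093654}, \eqref{degtfte093654BIS} and~\eqref{6.5.10}. Computing
\[
\widetilde G(x,v)=\int_0^v\bigl[(u_\varepsilon(x)+t)^q-u_\varepsilon^q(x)\bigr]\,dt
=\frac{1}{q+1}\bigl[(u_\varepsilon(x)+v)^{q+1}-u_\varepsilon^{q+1}(x)\bigr]-u_\varepsilon^q(x)\,v
\]
for $v\geq 0$, one recognizes precisely the $h$-dependent bracket appearing in $G(x,v)$ divided by $\varepsilon h(x)$. An analogous identification (with exponent $2^*_s-1$ in place of $q$) shows that the remaining part of $G(x,v)$ equals $G^\star(x,v)$. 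Hence, for any $v\geq0$,
\[
G(x,v)=\varepsilon h(x)\,\widetilde G(x,v)+G^\star(x,v),
\]
and therefore
\[
I_\varepsilon(v)=I_\varepsilon^\star(v)-\varepsilon\int_{\R^N}h(x)\,\widetilde G(x,v)\,dx.
\]

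Next, I would apply this identity to the path $v:=t\phi z_{\mu,\xi}$, which is nonnegative for every $t\geq0$ since $\phi\geq0$ and $z_{\mu,\xi}>0$. By Lemma~\ref{eq6.5.9}, the corrective term $\varepsilon\int h(x)\,\widetilde G(x,t\phi z_{\mu,\xi})\,dx$ is nonnegative, so
\[
I_\varepsilon(t\phi z_{\mu,\xi})\leq I_\varepsilon^\star(t\phi z_{\mu,\xi})\qquad\text{for every }t\geq0,\ \mu>0.
\]
Taking the supremum over $\mu\in(0,\mu_1)$ (with $\mu_1$ provided by Lemma~\ref{lemma6.5.3BIS}) and letting $t\to+\infty$ yields both the limit statement and the existence of $T_1>0$ such that $\sup_{\mu\in(0,\mu_1)}I_\varepsilon(t\phi z_{\mu,\xi})\leq0$ for all $t\geq T_1$.

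There is no substantial analytic obstacle here: the whole content is the algebraic splitting of $G$ into the already-analyzed critical part $G^\star$ plus a sign-definite subcritical correction, combined with the positivity of $h$ on $\mathrm{supp}(\phi)\subset B_{\mu_0}(\xi)$ guaranteed by~\eqref{djiweoty8e4otyvbc5674t657485748748hgfjdfghddh}. The only point requiring mild care is confirming that $v_+=v$ along the chosen path, so that the truncations in the definitions of $G$ and $G^\star$ drop out cleanly; this is automatic from the nonnegativity of $\phi$ and $z_{\mu,\xi}$.
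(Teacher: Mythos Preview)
Your proof is correct and follows essentially the same approach as the paper: split $G=G^\star+\varepsilon h\,\widetilde G$, apply Lemma~\ref{eq6.5.9} to get $I_\varepsilon(t\phi z_{\mu,\xi})\le I_\varepsilon^\star(t\phi z_{\mu,\xi})$, and then invoke Lemma~\ref{lemma6.5.3BIS}. The paper's proof is slightly terser, but the argument is identical.
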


\begin{proof}
Recalling the definition of~$G$ and~$\widetilde{G}$ in~\eqref{degtfte093654}
and~\eqref{degtfte093654BIS} respectively, we can write~$G=G^\star+\varepsilon h \widetilde{G}$. Thus, from Lemma~\ref{eq6.5.9}
we see that
\begin{equation}\label{feiwhwbnrgjkthioi8eu56895}
I_\varepsilon(t\phi z_{\mu,\xi})=
I_\varepsilon^\star(t\phi z_{\mu,\xi})
-\varepsilon \int_{\R^N}h(x)\widetilde{G}(x,t\phi(x)z_{\mu,\xi}(x))\,dx
\leq I_\varepsilon^\star(t\phi z_{\mu,\xi}).
\end{equation}
{F}rom this and Lemma~\ref{lemma6.5.3BIS} we obtain the desired result.
\end{proof}

In addition, we have the following result for
the functional~$I_\varepsilon^\star$.

\begin{lem}\label{lemma6.5.7}
There exists~$\mu_\star\in (0,\mu_0)$ such that if~$\mu\in (0,\mu_\star)$ then
\begin{equation}\label{eq6.5.17}
\sup_{t\geq 0}I_\varepsilon^\star(t\phi z_{\mu,\xi})
<\frac{s}{N}S^\frac{N}{2s}.
\end{equation} 
\end{lem}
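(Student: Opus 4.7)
The plan is to test with $\phi z_{\mu,\xi}$ and exploit the strict positivity of $u_\varepsilon$ near $\xi$---guaranteed by Proposition~\ref{prop:primasol} applied on the open set $B_{\mu_0}(\xi)$, recalling \eqref{djiweoty8e4otyvbc5674t657485748748hgfjdfghddh} and the $C^\alpha$ regularity of $u_\varepsilon$---to produce a quantitative correction that beats the Aubin--Talenti defect. In particular, I will fix $\rho,m>0$ such that $u_\varepsilon\ge m$ on $B_\rho(\xi)\subset B_{\mu_0/2}(\xi)$.

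First, I would obtain the baseline estimate. Writing $v:=\phi z_{\mu,\xi}$, the integral identity $(u_\varepsilon+w)^{2^*_s}-u_\varepsilon^{2^*_s}=2^*_s\int_0^w(u_\varepsilon+\sigma)^{2^*_s-1}\,d\sigma$ combined with Lemma~\ref{lemma6.5.4} (used with $p=2^*_s-1\ge 1$) gives $(u_\varepsilon+w)^{2^*_s}\ge u_\varepsilon^{2^*_s}+2^*_s u_\varepsilon^{2^*_s-1}w+w^{2^*_s}$, hence $G^\star(x,w)\ge w^{2^*_s}/2^*_s$. Setting $A_\mu:=[v]_s^2$ and $B_\mu:=\int_{\R^N}v^{2^*_s}\,dx$, this yields $I_\varepsilon^\star(tv)\le \frac{t^2}{2}A_\mu-\frac{t^{2^*_s}}{2^*_s}B_\mu$, whose supremum in $t$ equals $\frac{s}{N}A_\mu^{N/(2s)}B_\mu^{-(N-2s)/(2s)}$. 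Combining \eqref{eq6.5.7}, \eqref{eq6.5.8} and \eqref{lemma6.5.1} with a first-order expansion, this supremum is at most $\frac{s}{N}S^{N/(2s)}+C_0\mu^{N-2s}$; this is the defect I must beat.

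Next, I would sharpen the lower bound on $G^\star$ by plugging the convexity inequality $(u_\varepsilon+\sigma)^{2^*_s-1}\ge \sigma^{2^*_s-1}+(2^*_s-1)\sigma^{2^*_s-2}u_\varepsilon$ (valid since $x\mapsto x^{2^*_s-1}$ is convex with $2^*_s-1\ge 1$) into the integral representation of $(u_\varepsilon+w)^{2^*_s}-u_\varepsilon^{2^*_s}-2^*_s u_\varepsilon^{2^*_s-1}w$. On the set $\{w\ge C u_\varepsilon\}$ for a sufficiently large $C=C(2^*_s)$, this produces $G^\star(x,w)-w^{2^*_s}/2^*_s\ge c\,u_\varepsilon\,w^{2^*_s-1}$. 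For any fixed $t_0>0$, since $z_{\mu,\xi}(\xi)\sim\mu^{-(N-2s)/2}$, the set $\{t_0 v\ge C u_\varepsilon\}$ contains a ball $B_{r_\mu}(\xi)\subset B_\rho(\xi)$ with $r_\mu\sim\mu^{1/2}$ for $\mu$ small. Rescaling $y=(x-\xi)/\mu$ and using the integrability of $z^{2^*_s-1}$ (which decays like $|y|^{-(N+2s)}$) gives, uniformly for $t\in[t_0,T_1]$,
\[\int_{B_{r_\mu}(\xi)}u_\varepsilon(tv)^{2^*_s-1}\,dx\ge c_1 t^{2^*_s-1}\mu^{(N-2s)/2}.\]

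Finally, I would partition $t\ge 0$ into three zones: fixing $t_0\in(0,\sqrt{2s/N})$ and letting $T_1$ be as in Lemma~\ref{lemma6.5.3BIS}, the range $t\in[0,t_0]$ is controlled by the trivial bound $I_\varepsilon^\star(tv)\le\frac{t_0^2}{2}(S^{N/(2s)}+C\mu^{N-2s})<\frac{s}{N}S^{N/(2s)}$ for $\mu$ small; the range $t\ge T_1$ is controlled by Lemma~\ref{lemma6.5.3BIS}, which gives $I_\varepsilon^\star(tv)\le 0$; and on $t\in[t_0,T_1]$, combining the two preceding paragraphs yields $I_\varepsilon^\star(tv)\le \frac{s}{N}S^{N/(2s)}+C_0\mu^{N-2s}-c_1 t_0^{2^*_s-1}\mu^{(N-2s)/2}$, strictly less than $\frac{s}{N}S^{N/(2s)}$ once $\mu$ is small, because $(N-2s)/2<N-2s$ (equivalent to our standing assumption $N>2s$). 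The main obstacle is identifying the correct scaling of the correction: only the symmetric piece $u_\varepsilon w^{2^*_s-1}$ (which is \emph{not} what Lemma~\ref{lemma6.5.5} supplies directly, as that would give the smaller term $u_\varepsilon^{2^*_s-1}w$) produces the exponent $(N-2s)/2$, and the hypothesis $N>2s$ is precisely what makes this correction asymptotically larger than the Sobolev-type defect $\mu^{N-2s}$ as $\mu\to 0^+$.
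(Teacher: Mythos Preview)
Your argument is correct and takes a genuinely different, more streamlined route than the paper. The paper splits into two dimensional regimes: for~$N\in(2s,4s]$ it uses Lemma~\ref{lemma6.5.5} (which requires~$2^*_s-1\ge2$) to extract the cross term~$c\,u_\varepsilon (t\phi z_{\mu,\xi})^{2^*_s-1}$ over all of~$\R^N$, yielding a correction~$\mu^{(N-2s)/2}$; for~$N>4s$ it instead invokes Lemma~\ref{lemma6.5.6} on the annulus~$B_{2\sqrt\mu}(\xi)\setminus B_{\sqrt\mu}(\xi)$ (where the ratio~$t\phi z_{\mu,\xi}/u_\varepsilon$ stays bounded), obtaining~$c\,u_\varepsilon^{2^*_s-2}(t\phi z_{\mu,\xi})^2$ and a weaker correction~$\mu^{N/2}$; it then determines the supremum by analyzing the critical point~$T(\mu)$ of an auxiliary function~$\Psi$ via implicit differentiation. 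You replace both regimes by the single convexity inequality $(u_\varepsilon+\sigma)^{2^*_s-1}\ge\sigma^{2^*_s-1}+(2^*_s-1)\sigma^{2^*_s-2}u_\varepsilon$, valid for all~$N>2s$, and absorb the residual~$-u_\varepsilon^{2^*_s-1}w$ by restricting to a shrinking ball~$B_{r_\mu}(\xi)$ with~$r_\mu\sim\mu^{1/2}$ on which~$tv\ge Cu_\varepsilon$; this uniformly produces the correction~$\mu^{(N-2s)/2}$, which is at least as strong as the paper's in every dimension. Your three-zone decomposition of~$t$ (small, intermediate, large) is also simpler than the paper's implicit-function argument for~$\Psi$, at the mild cost of introducing the auxiliary threshold~$t_0<\sqrt{2s/N}$. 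The only point to state a bit more carefully is that the constant in~$r_\mu\sim\mu^{1/2}$ depends on~$t_0$ (through the requirement~$t_0 z_{\mu,\xi}\ge C\|u_\varepsilon\|_{L^\infty}$), but since~$t_0$ is fixed first this causes no circularity.
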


\begin{proof}
Let~$\mu_1>0$ and~$T_1>0$ as in Lemma~\ref{lemma6.5.3BIS}.
We take~$\mu_\star\in(0, \mu_1]$. Without loss of generality, we can suppose
that~$\mu_\star\le\frac{\mu_0^2}{16}$ whenever needed.

In this way, in light of~\eqref{eq6.5.12BIS} we see that
\begin{equation}\label{eq6.5.18}
\sup_{t\geq T_1}\sup_{\mu\in(0,\mu_\star)}
I_\varepsilon^\star(t\phi z_{\mu,\xi})
\leq 
\sup_{t\geq T_1}\sup_{\mu\in(0,\mu_1)}
I_\varepsilon^\star(t\phi z_{\mu,\xi})
\leq 0<\frac{s}{N}S^\frac{N}{2s}.
\end{equation}
Hence, the claim in~\eqref{eq6.5.17} will be completed if we prove that,
for any~$\mu\in (0,\mu_\star)$,
\begin{equation}\label{eq6.5.19}
\sup_{t\in [0,T_1]}I_\varepsilon^\star(t\phi z_{\mu,\xi})
<\frac{s}{N}S^\frac{N}{2s}.
\end{equation}
In order to do so, we set
\begin{equation}\label{eq6.5.20}
m:=
\begin{cases}
2      &\mbox{if } N>4s,
\\
2^*_s-1  &\mbox{if } N\in (2s,4s],
\end{cases}
\end{equation}
and
\begin{equation}\label{eq6.5.21}
\Omega:=
\begin{cases}
B_{2\sqrt{\mu}}(\xi)\setminus B_{\sqrt{\mu}}(\xi) &\mbox{if } N>4s,
\\
\R^N  &\mbox{if } N\in (2s,4s].
\end{cases}
\end{equation}
We notice that, if~$N\in (2s,4s]$, then~$m-2=\frac{6s-N}{N-2s}>0$, and so
\begin{equation}\label{eq6.5.22}
m-2\geq 0 \mbox{ for every } N>2s.
\end{equation}

We claim that, for any~$t\in [0,T_1]$, any~$\mu\in (0,\mu_\star)$
and any~$x\in \Omega$,
\begin{equation}\label{eq6.5.23}
G^\star(x,t\phi(x) z_{\mu,\xi}(x))
\geq \frac{t^{2^*_s}\phi^{2^*_s}(x) z_{\mu,\xi}^{2^*_s}(x)}{2^*_s} 
+\frac{cu_\varepsilon^{2^*_s-m}(x)t^m\phi^m(x) z_{\mu,\xi}^m(x)}{m}
,\end{equation}
for some~$c>0$.

To prove this claim, 
we treat separately the cases~$N>4s$ and~$N\in (2s,4s]$. If~$N>4s$, 
we take~$a:=u_\varepsilon(x)$ and
\[
b\in\left[0, t\inf_{x\in \Omega}\phi(x) z_{\mu,\xi}(x)\right].
\]
We point out that
$$ a\ge \inf_{\Omega} u_\varepsilon \ge \inf_{B_{\mu_0/2}(\xi)}u_\varepsilon=:j_\varepsilon$$
and we stress that~$j_\varepsilon>0$, thanks to~\eqref{djiweoty8e4otyvbc5674t657485748748hgfjdfghddh} and
Proposition~\ref{prop:primasol}.

Moreover, recalling the definition of~$z$ in~\eqref{definizionez},
\[
b\leq t\phi(x)z_{\mu,\xi}(x)\leq tz_{\mu,\xi}(x)=
t\mu^{-\frac{N-2s}{2}}z\left(\frac{x-\xi}{\mu}\right)
=\frac{C_{N,s}t\mu^{-\frac{N-2s}{2}}}{\left(1+\left|\frac{x-\xi}{\mu}\right|^2\right)^\frac{N-2s}{2}}
=\frac{C_{N,s}t\mu^\frac{N-2s}{2}}{(\mu^2+|x-\xi|^2)^\frac{N-2s}{2}}.
\]
Since~$x\in B_{2\sqrt{\mu}}(\xi)\setminus B_{\sqrt{\mu}}(\xi)$,
we have that~$|x-\xi|\geq \sqrt{\mu}$, and so
\[
b\leq \frac{C_{N,s}t\mu^\frac{N-2s}{2}}{(\mu^2+\mu)^\frac{N-2s}{2}}
\leq \frac{C_{N,s}t\mu^\frac{N-2s}{2}}{\mu^\frac{N-2s}{2}}=C_{N,s}t
\leq C_{N,s}T_1.
\]
{F}rom this, we obtain that~$\frac{b}{a}\leq k$
with~$k:=\frac{C_{N,s}T_1}{j_\varepsilon}$, hence we can apply Lemma~\ref{lemma6.5.6} to
obtain that
\[
\begin{aligned}
G^\star(x,t\phi (x)z_{\mu,\xi}(x))&=
\int_0^{t\phi(x) z_{\mu,\xi}(x)}\Big((u_\varepsilon(x)+b)^{2^*_s-1}
-u_\varepsilon^{2^*_s-1}(x)\Big)\,db \\
&=\int_0^{t\phi(x) z_{\mu,\xi}(x)}\Big( (a+b)^{2^*_s-1}
-a^{2^*_s-1}\Big)\,db \\
&\geq \int_0^{t\phi(x) z_{\mu,\xi}(x)}\Big(b^{2^*_s-1}+c_{2^*_s-1,k}a^{2^*_s-2}b\Big)\,db \\
&=\frac{(t\phi(x) z_{\mu,\xi}(x))^{2^*_s}}{2^*_s}
+c_{2^*_s-1,k}u_\varepsilon^{2^*_s-2}(x)\frac{(t\phi(x) z_{\mu,\xi}(x))^2}{2}.
\end{aligned}
\]
This and~\eqref{eq6.5.20} yield~\eqref{eq6.5.23} when~$N>4s$.

To prove~\eqref{eq6.5.23} when~$N\in (2s,4s]$, we first observe 
that 
\[
2^*_s-1=\frac{N+2s}{N-2s}\geq 2.
\]
Hence, we choose $$a\in\left[0, \inf_{x\in\R^N}t\phi(x) z_{\mu,\xi}(x)\right]$$ and~$b:=u_\varepsilon(x)$ and we use Lemma~\ref{lemma6.5.5} to see that
\[
\begin{aligned}
G^\star(x,t\phi (x)z_{\mu,\xi}(x))&=
\int_0^{t\phi (x)z_{\mu,\xi}(x)}\Big( (u_\varepsilon(x)+a)^{2^*_s-1}
-u_\varepsilon^{2^*_s-1}(x)\Big)\,da \\
&=\int_0^{t\phi(x) z_{\mu,\xi}(x)}\Big((a+b)^{2^*_s-1}
-b^{2^*_s-1}\Big)\,da  \\ 
&\geq \int_0^{t\phi (x)z_{\mu,\xi}(x)}\Big(a^{2^*_s-1}
-c_{2^*_s-1} a^{2^*_s-2}b\Big)\,da \\
&=\frac{(t\phi (x)z_{\mu,\xi}(x))^{2^*_s}}{2^*_s}
+c_{2^*_s-1}u_\varepsilon(x) \frac{(t\phi (x)z_{\mu,\xi}(x))^{2^*_s-1}}{2^*_s-1}.
\end{aligned} 
\]
This and~\eqref{eq6.5.20} give~\eqref{eq6.5.23} when~$N\in (2s,4s]$, thus completing the proof of~\eqref{eq6.5.23}.

Now we claim that, for any~$t\in [0,T_1]$, 
$\mu\in (0,\mu_\star)$ and~$x\in \R^N$,
\begin{equation}\label{eq6.5.25}
G^\star(x,t\phi (x)z_{\mu,\xi}(x))
\geq \frac{t^{2^*_s}\phi^{2^*_s}(x) z_{\mu,\xi}^{2^*_s}(x)}{2^*_s}.
\end{equation}
We remark that~\eqref{eq6.5.23} is a stronger inequality than~\eqref{eq6.5.25},
but it only holds in~$\Omega$,
while~\eqref{eq6.5.25} holds in the whole of~$\R^N$, and this is 
an advantage when~$N>4s$ (recall the definition of~$\Omega$ in~\eqref{eq6.5.21}).

In order to prove~\eqref{eq6.5.25}, we use Lemma~\ref{lemma6.5.4}
with~$a:=u_\varepsilon(x)$ and~$b\in[ 0,+\infty)$ to see that
\[
\begin{aligned}
G^\star(x,t\phi(x) z_{\mu,\xi}(x))&=
\int_0^{t\phi(x) z_{\mu,\xi}(x)}\Big( (u_\varepsilon(x)+b)^{2^*_s-1}
-u_\varepsilon^{2^*_s-1}(x)\Big)\,db \\
&=\int_0^{t\phi(x) z_{\mu,\xi}(x)}\Big((a+b)^{2^*_s-1}
-a^{2^*_s-1}\Big)\,db \\
&\geq \int_0^{t\phi (x)z_{\mu,\xi}(x)}b^{2^*_s-1}\,db\\& 
=\frac{(t\phi (x)z_{\mu,\xi}(x))^{2^*_s}}{2^*_s},
\end{aligned}
\]
which establishes~\eqref{eq6.5.25}.

Combining~\eqref{eq6.5.23} and~\eqref{eq6.5.25}, we obtain that
\begin{equation}\label{eq6.5.26}
\begin{aligned}&
\int_{\R^N}G^\star(x,t\phi(x) z_{\mu,\xi}(x))\,dx\\
&=\int_{\R^N \setminus \Omega}G^\star(x,t\phi (x)z_{\mu,\xi}(x))\,dx
+\int_{\Omega}G^\star(x,t\phi(x) z_{\mu,\xi}(x))\,dx \\
&\geq \int_{\R^N \setminus \Omega}
\frac{t^{2^*_s}\phi^{2^*_s}(x) z_{\mu,\xi}^{2^*_s}(x)}{2^*_s}\,dx
+\int_{\Omega}\left(
\frac{t^{2^*_s}\phi^{2^*_s} (x)z_{\mu,\xi}^{2^*_s}(x)}{2^*_s} 
+\frac{cu_\varepsilon^{2^*_s-m}(x)t^m\phi^m(x) z_{\mu,\xi}^m(x)}{m}
\right)\,dx \\
&=\frac{t^{2^*_s}}{2^*_s}
\int_{\R^N}\phi^{2^*_s}(x) z_{\mu,\xi}^{2^*_s}(x)\,dx
+\frac{ct^m}{m}
\int_{\Omega}u_\varepsilon^{2^*_s-m}(x)\phi^m (x)z_{\mu,\xi}^m(x)\,dx.
\end{aligned}
\end{equation}

Now, we show that
\begin{equation}\label{eq6.5.27}
\int_{\Omega}u_\varepsilon^{2^*_s-m}(x)\phi^m (x)z_{\mu,\xi}^m(x)\,dx
\geq c'\mu^\beta,
\end{equation}
for some~$c'>0$, where
\begin{equation}\label{eq6.5.28}
\beta:=
\begin{cases}
\dfrac{N}{2}   & \mbox{if } N>4s,
\\ \\
\dfrac{N-2s}{2} & \mbox{if } N\in (2s,4s].
\end{cases}
\end{equation}
To prove this, when~$N>4s$ we observe that, for small~$\mu$, we have 
that~$B_{2\sqrt{\mu}}(\xi)\subset B_{\mu_0/2}(\xi)$, and in this set~$\phi(x)=1$ thanks to~\eqref{eq6.5.4}. 

Hence, recalling~\eqref{eq6.5.20} and~\eqref{eq6.5.21} we have that
\[
\begin{aligned}
\int_{\Omega}u_\varepsilon^{2^*_s-m}(x)\phi^m(x) z_{\mu,\xi}^m(x)\,dx
&=\int_{B_{2\sqrt{\mu}}(\xi)\setminus B_{\sqrt{\mu}}(\xi)}
u_\varepsilon^{2^*_s-2}(x) z_{\mu,\xi}^2(x)\,dx \\
&\geq \inf_{B_{\mu_0/2}(\xi)}u_\varepsilon^{2^*_s-2}
\int_{B_{2\sqrt{\mu}}(\xi)\setminus B_{\sqrt{\mu}}(\xi)}
z_{\mu,\xi}^2(x)\,dx \\
&= \mu^{-(N-2s)}\inf_{B_{\mu_0/2}(\xi)}u_\varepsilon^{2^*_s-2}
\int_{B_{2\sqrt{\mu}}(\xi)\setminus B_{\sqrt{\mu}}(\xi)}
z^2\left(\frac{x-\xi}{\mu}\right)\,dx \\
&=\mu^{2s}\inf_{B_{\mu_0/2}(\xi)}u_\varepsilon^{2^*_s-2} 
\int_{B_{{2}/{\sqrt{\mu}}}\setminus B_{{1}/{\sqrt{\mu}}}} z^2(y)\,dy.
\end{aligned}
\]
Thus, recalling~\eqref{definizionez} and taking~$\mu$ small enough,
we have
\[
\begin{aligned}
&\int_{\Omega}u_\varepsilon^{2^*_s-m}(x)\phi^m (x)z_{\mu,\xi}^m(x)\,dx
\geq c_1 \mu^{2s} \int_{1/\sqrt{\mu}}^{2/\sqrt{\mu}}
\frac{\rho^{N-1}}{(1+\rho^2)^{N-2s}}\,d\rho \\
&\qquad \geq c_1 \mu^{2s} \int_{1/\sqrt{\mu}}^{2/\sqrt{\mu}}
\frac{\rho^{N-1}}{(2\rho^2)^{N-2s}}\,d\rho
=c_2 \mu^\frac{N}{2},
\end{aligned}
\]
for some~$c_1$, $c_2>0$, and this proves~\eqref{eq6.5.27} when~$N>4s$.

Now we prove~\eqref{eq6.5.27} when~$N\in (2s,4s]$. For this, we recall~\eqref{eq6.5.20} and~\eqref{eq6.5.21} and observe that, for~$\mu$
sufficiently small,
\[
\begin{aligned}
\int_{\Omega}u_\varepsilon^{2^*_s-m}(x)\phi^m(x) z_{\mu,\xi}^m(x)\,dx
&=\int_{\R^N}u_\varepsilon(x)\phi^{2^*_s-1}(x) z_{\mu,\xi}^{2^*_s-1}(x)\,dx \\
&\geq \mu^{-\frac{N+2s}{2}}\int_{B_{2\sqrt{\mu}}(\xi)}
u_\varepsilon(x) z^{2^*_s-1}\left(\frac{x-\xi}{\mu}\right)\,dx \\
&\geq \mu^{-\frac{N+2s}{2}} \inf_{B_{\mu_0/2}(\xi)}u_\varepsilon
\int_{B_{2\sqrt{\mu}}(\xi)}
z^{2^*_s-1}\left(\frac{x-\xi}{\mu}\right)\,dx \\
&=\mu^{\frac{N-2s}{2}}\inf_{B_{\mu_0/2}(\xi)}u_\varepsilon
\int_{B_{{2}/{\sqrt{\mu}}}}z^{2^*_s-1}(y)\,dy \\
&\geq \mu^{\frac{N-2s}{2}}\inf_{B_{\mu_0/2}(\xi)}u_\varepsilon
\int_{B_1}z^{2^*_s-1}(y)\,dy \\
&\geq c'\mu^{\frac{N-2s}{2}},
\end{aligned}
\]
for some~$c'>0$, which concludes the proof in the case~$N\in (2s,4s]$.
Thus, the claim in~\eqref{eq6.5.27} is established.

Now, using~\eqref{eq6.5.27} in~\eqref{eq6.5.26}, we obtain that
\begin{equation*}
\int_{\R^N}G^\star(x,t\phi (x)z_{\mu,\xi}(x))\,dx
\geq \frac{t^{2^*_s}}{2^*_s}
\int_{\R^N}\phi^{2^*_s}(x) z_{\mu,\xi}^{2^*_s}(x)\,dx
+\frac{c\,t^m\mu^\beta}{m},
\end{equation*}
up to renaming~$c>0$. 

{F}rom this and~\eqref{lemma6.5.1}, we infer that
\[
\int_{\R^N}G^\star(x,t\phi z_{\mu,\xi})\,dx
\geq \frac{t^{2^*_s}}{2^*_s}
\int_{\R^N} z_{\mu,\xi}^{2^*_s}\,dx
+\frac{c\,t^m\mu^\beta}{m}
-\frac{Ct^{2^*_s}\mu^N}{2^*_s}.
\]
This and~\eqref{eq6.5.7} give that
\[
\int_{\R^N}G^\star(x,t\phi(x) z_{\mu,\xi}(x))\,dx
\geq \frac{t^{2^*_s}}{2^*_s}S^\frac{N}{2s}
+\frac{c\,t^m\mu^\beta}{m}
-\frac{Ct^{2^*_s}\mu^N}{2^*_s}.
\]
As a consequence, recalling~\eqref{eq6.5.8}, up to renaming constants,
\[
\begin{aligned}
I_\varepsilon^\star (t\phi z_{\mu,\xi})
&\leq \frac{t^2}{2}[\phi z_{\mu,\xi}]_{s}
-\frac{t^{2^*_s}}{2^*_s}S^\frac{N}{2s}
-\frac{ct^m\mu^\beta}{m}
+\frac{Ct^{2^*_s}\mu^N}{2^*_s} \\
&\leq \frac{t^2}{2}S^\frac{N}{2s}
-\frac{t^{2^*_s}}{2^*_s}S^\frac{N}{2s}
-\frac{c\,t^m\mu^\beta}{m}
+\frac{Ct^{2^*_s}\mu^N}{2^*_s}
+\frac{Ct^2 \mu^{N-2s}}{2},
\end{aligned}
\]
and so
\begin{equation}\label{eq6.5.30}
I_\varepsilon^\star(t\phi z_{\mu,\xi})
\leq S^\frac{N}{2s} \Psi(t),
\end{equation}
with
\[
\Psi(t):=
\frac{t^2}{2}-\frac{t^{2^*_s}}{2^*_s}
-\frac{c\,t^m\mu^\beta}{m}+\frac{Ct^{2^*_s}\mu^N}{2^*_s}
+\frac{Ct^2 \mu^{N-2s}}{2}
\]
for some~$c$, $C>0$.

Now we claim that, if~$\mu$ is sufficiently small,
\begin{equation}\label{eq6.5.31}
\sup_{t\geq 0}\Psi(t)<\frac{s}{N}.
\end{equation}
To show this, we observe that~$\Psi(0)=0$ and, if~$\mu$ is sufficiently small,
\[
\lim_{t\to +\infty}\Psi(t)=-\infty,
\]
since~$2^*_s>\max\{2,m\}$, thanks to~\eqref{eq6.5.20}.

On this account, we conclude that~$\Psi$ attains its maximum at some point~$T\in [0,+\infty)$. Clearly, it~$T=0$, then~$\Psi(T)=0$ and~\eqref{eq6.5.31} is obvious, so we can assume that~$T\in (0,+\infty)$.
Consequently, we have~$\Psi'(T)=0$, and therefore
\[
0=\frac{\Psi'(T)}{T}
=1-T^{2^*_s-2}-c\mu^\beta T^{m-2}+C\mu^N T^{2^*_s-2}+C\mu^{N-2s}.
\]
We set
\[
\Phi_\mu(t):=
1-t^{2^*_s-2}-c\mu^\beta t^{m-2}+C\mu^N t^{2^*_s-2}+C\mu^{N-2s}
\]
and we have that~$T=T(\mu)$ is a solution of~$\Phi_\mu(t)=0$.

We remark that
\[
\Phi_\mu'(t)=
-(2^*_s-2)(1-C\mu^N)t^{2^*_s-3}-c\mu^\beta (m-2)t^{m-3}<0,
\]
since~$m-2\geq 0$ (recall~\eqref{eq6.5.22})
and~$(2^*_s-2)(1-C\mu^N)\geq 0$ if~$\mu$ is 
small enough.
This says that~$\Phi_\mu$ is strictly decreasing, hence~$T=T(\mu)$ is the unique solution of~$\Phi_\mu(t)=0$.

It is convenient to write~$\tau(\mu):=T(\mu^\frac{1}{\beta})$
and~$\eta:=\mu^\beta$, so that our equation becomes
\[
\begin{aligned}
0=\Phi_\mu(T(\mu))=\Phi_\mu(\tau(\mu^\beta))=\Phi_\mu(\tau(\eta))
=1-\big(1-C\eta^\frac{N}{\beta}\big)(\tau(\eta))^{2^*_s-2}
-c\eta (\tau(\eta))^{m-2}+C\eta^\frac{N-2s}{\beta}.
\end{aligned}
\]
Hence, if we differentiate in~$\eta$, we have that
\begin{equation}\label{eq6.5.32}
\begin{aligned}
0&=\frac{\partial}{\partial \eta}\left(1-(1-C\eta^\frac{N}{\beta})(\tau(\eta))^{2^*_s-2}
-c\eta (\tau(\eta))^{m-2}+C\eta^\frac{N-2s}{\beta} \right) \\
&=-(2^*_s-2)\big(1-C\eta^\frac{N}{\beta}\big)(\tau(\eta))^{2^*_s-3}
\tau'(\eta)
+C\frac{N}{\beta}\eta^{\frac{N}{\beta}-1}(\tau(\eta))^{2^*_s-2} \\
&\qquad-c (\tau(\eta))^{m-2}-c(m-2)\eta (\tau(\eta))^{m-3}\tau'(\eta)
+\frac{C(N-2s)}{\beta}\eta^{\frac{N-2s}{\beta}-1}.
\end{aligned}
\end{equation}

Now, we observe that, by~\eqref{eq6.5.28},
\[
\begin{aligned}
\frac{N-2s}{\beta}-1&=
\begin{cases}
\dfrac{2(N-2s)}{N}-1   &  \mbox{if } N>4s,
\\ \\
2-1              &  \mbox{if } N\in(2s,4s],
\end{cases}
\\
&=
\begin{cases}
\dfrac{N-4s}{N}   &  \mbox{if } N>4s,
\\ \\
1             &  \mbox{if } N\in(2s,4s],
\end{cases}
\end{aligned}
\]
and thus
\begin{equation}\label{eq6.5.33}
\frac{N-2s}{\beta}-1>0.
\end{equation}

We also notice that when~$\mu=0$, we have that~$T=1$ is a solution of~$\Psi_0(t)=0$, that is~$T(0)=1$, and so~$\tau(0)=1$.
Hence, we evaluate~\eqref{eq6.5.32} at~$\eta=0$ and use~\eqref{eq6.5.33} to see that~$0=-(2^*_s-2)\tau'(0)-c$.
In this way,
\[
\tau'(0)=-\frac{c}{2^*_s-2},
\]
which gives that
\[
\tau(\eta)=1-\frac{c\eta}{2^*_s-2}+o(\eta),
\]
and accordingly
\[
T(\mu)=\tau(\mu^\beta)
=1-\frac{c\mu^\beta}{2^*_s-2}+o(\mu^\beta)
=1-c_0\mu^\beta+o(\mu^\beta),
\]
for some~$c_0>0$. 

Consequently, we have that
\[
\begin{aligned}
\sup_{t\geq 0}\Psi(t)&=\Psi(T(\mu)) \\
&=(1+C\mu^{N-2s})\frac{(T(\mu))^2}{2}
-(1-C\mu^N)\frac{(T(\mu))^{2^*_s}}{2^*_s}
-\frac{c\mu^\beta (T(\mu))^m}{m} \\
&=(1+C\mu^{N-2s})\frac{(1-c_0\mu^\beta+o(\mu^\beta))^2}{2}
-(1-C\mu^N)\frac{(1-c_0\mu^\beta+o(\mu^\beta))^{2^*_s}}{2^*_s} \\
&\qquad\quad -\frac{c\mu^\beta (1-c_0\mu^\beta+o(\mu^\beta))^m}{m} \\
&=(1+C\mu^{N-2s})\frac{1-2c_0\mu^\beta}{2}
-(1-C\mu^N)\frac{1-2^*_sc_0\mu^\beta}{2^*_s}
-\frac{c\mu^\beta}{m}+o(\mu^\beta) \\
&=\frac{1-2c_0\mu^\beta}{2}-\frac{1-2^*_sc_0\mu^\beta}{2^*_s}
-\frac{c\mu^\beta}{m}+o(\mu^\beta) \\
&=\frac{1}{2}-\frac{1}{2^*_s}-\frac{c\mu^\beta}{m}+o(\mu^\beta)\\
&<\frac{1}{2}-\frac{1}{2^*_s}\\&=\frac{s}{N},
\end{aligned}
\]
which proves~\eqref{eq6.5.31}.

Using~\eqref{eq6.5.30} and~\eqref{eq6.5.31}, we obtain that
\[
\sup_{t\in[0,T_1]}I_\varepsilon^\star(t\phi z_{\mu,\xi})
\leq S^\frac{N}{2s}\sup_{t\geq 0}\Psi(t)
<\frac{s}{N} S^\frac{N}{2s},
\]
which gives~\eqref{eq6.5.19} and concludes the proof of Lemma~\ref{lemma6.5.7}. 
\end{proof}

As a consequence of~\eqref{feiwhwbnrgjkthioi8eu56895} and Lemma~\ref{lemma6.5.7}, we have that:

\begin{cor}\label{lemma6.5.7BIS}
There exists~$\mu_\star\in (0,\mu_0)$ such that if~$\mu\in (0,\mu_\star)$ then
\begin{equation*}
\sup_{t\geq 0}I_\varepsilon(t\phi z_{\mu,\xi})
<\frac{s}{N}S^\frac{N}{2s}.
\end{equation*} 
\end{cor}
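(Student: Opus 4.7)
The plan is to deduce this corollary almost immediately from Lemma~\ref{lemma6.5.7}, by means of the pointwise inequality already recorded in~\eqref{feiwhwbnrgjkthioi8eu56895}. Indeed, recalling the splitting $G = G^\star + \varepsilon h\widetilde{G}$ together with the nonnegativity established in Lemma~\ref{eq6.5.9}, one has, for every $t\ge 0$ and every $\mu>0$,
\[
I_\varepsilon(t\phi z_{\mu,\xi}) = I_\varepsilon^\star(t\phi z_{\mu,\xi}) - \varepsilon\int_{\R^N} h(x)\,\widetilde{G}(x, t\phi(x) z_{\mu,\xi}(x))\,dx \le I_\varepsilon^\star(t\phi z_{\mu,\xi}).
\]
The perturbative sublinear term $\varepsilon h \widetilde{G}$ is \emph{nonnegative} on the support of $\phi$ (thanks to $(h_1)$ and the fact that $\widetilde{G}\ge 0$ on nonnegative inputs, see~\eqref{eq6.5.3}), which is precisely the reason it lowers the energy rather than raising it.

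Accordingly, I would take the same $\mu_\star \in (0,\mu_0)$ produced by Lemma~\ref{lemma6.5.7}, fix any $\mu \in (0,\mu_\star)$, and then pass to the supremum over $t\ge 0$ in the above inequality to obtain
\[
\sup_{t\ge 0} I_\varepsilon(t\phi z_{\mu,\xi}) \le \sup_{t\ge 0} I_\varepsilon^\star(t\phi z_{\mu,\xi}) < \frac{s}{N}S^{\frac{N}{2s}},
\]
where the last strict inequality is exactly the content of Lemma~\ref{lemma6.5.7}. This proves the desired claim.

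There is no real obstacle here: all the substantial analytic work has already been carried out in Lemma~\ref{lemma6.5.7} (the delicate asymptotic expansion of the supremum of $\Psi$, the dichotomy between $N>4s$ and $N\in(2s,4s]$ via Lemmata~\ref{lemma6.5.5} and~\ref{lemma6.5.6}, and the estimate of the critical Sobolev contribution around the concentration point). The only point worth double-checking while writing is that the inequality $I_\varepsilon \le I_\varepsilon^\star$ along the ray $t\phi z_{\mu,\xi}$ is valid for \emph{all} $t\ge 0$ uniformly, which is immediate since $t\phi z_{\mu,\xi}\ge 0$ and $h\ge 0$ on the support of $\phi$, so the integrand of the subtracted term never changes sign.
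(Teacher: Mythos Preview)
Your proposal is correct and matches the paper's approach exactly: the paper states the corollary as an immediate consequence of~\eqref{feiwhwbnrgjkthioi8eu56895} and Lemma~\ref{lemma6.5.7}, which is precisely the argument you give.
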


We are now ready to prove the existence of a second solution to~\eqref{problema} and thus complete the proof of Theorem~\ref{thduesoluzioni}.

\begin{prop} \label{prop:wsecsol98}
Suppose that~$(h_0)$ and~$(h_1)$ hold true.

Then, problem~\eqref{problema} admits a second nonnegative solution~$\widetilde u_\varepsilon$ that does not vanish identically.

Moreover, $\widetilde u_\varepsilon\in L^\infty(\R^N)\cap C^\alpha(\R^N)$, for any~$\alpha\in(0,\min\{2s,1\})$.

Also, if~$h\ge0$ in some open set~$\Omega\subseteq\R^N$, then~$\widetilde u_\varepsilon(x)>0$ for all~$x\in\Omega$.

In particular,
if~$h\ge0$, then~$\widetilde u_\varepsilon(x)>0$ for all~$x\in\R^N$.
\end{prop}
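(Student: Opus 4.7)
The plan is to produce the second solution by applying the Mountain Pass Theorem to the translated functional $I_\varepsilon$ via a contradiction argument. Suppose, by contradiction, that $v=0$ is the only critical point of $I_\varepsilon$ in $\mathcal{D}^{s,2}(\R^N)$. Under this assumption, Proposition~\ref{PSIepsilon} ensures that $I_\varepsilon$ satisfies the Palais-Smale condition at every level $c<\frac{s}{N}S^\frac{N}{2s}$, so the standard variational machinery can be activated below this threshold.

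I would then verify the mountain pass geometry. By Proposition~\ref{propvminimolocale}, the point $v=0$ is a local minimum of $I_\varepsilon$ with $I_\varepsilon(0)=0$, and under the contradiction hypothesis this minimum can be upgraded to a strict local minimum (since otherwise a standard deformation argument would produce a further critical point), so that $\inf_{[v]_s=\rho}I_\varepsilon(v)>0$ for some small $\rho>0$. On the other hand, Corollary~\ref{lemma6.5.3} yields, for $\mu\in(0,\mu_1)$ and some $T\geq T_1$, a point $v_\star:=T\phi z_{\mu,\xi}$ with $[v_\star]_s>\rho$ and $I_\varepsilon(v_\star)\leq 0$. Moreover, Corollary~\ref{lemma6.5.7BIS} guarantees that, up to taking $\mu$ smaller, the straight path $\sigma\mapsto \sigma v_\star$ joining $0$ to $v_\star$ satisfies $\max_{\sigma\in[0,1]}I_\varepsilon(\sigma v_\star)<\frac{s}{N}S^\frac{N}{2s}$. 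Therefore the mountain pass level
\begin{equation*}
c:=\inf_{\gamma\in\Gamma}\max_{\sigma\in[0,1]}I_\varepsilon(\gamma(\sigma)),
\qquad \Gamma:=\left\{\gamma\in C([0,1],\mathcal{D}^{s,2}(\R^N))\,:\,\gamma(0)=0,\ \gamma(1)=v_\star\right\},
\end{equation*}
lies in the interval $\bigl(0,\frac{s}{N}S^\frac{N}{2s}\bigr)$.

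Applying the Mountain Pass Theorem together with the Palais-Smale condition produces a critical point $\widetilde v_\varepsilon$ of $I_\varepsilon$ at level $c>0$, which is incompatible with the running assumption that $0$ is the only critical point (as $I_\varepsilon(0)=0$). Hence $I_\varepsilon$ must admit a critical point $\widetilde v_\varepsilon\not\equiv 0$. By Lemma~\ref{NUOVLEM}, the function $\widetilde u_\varepsilon:=u_\varepsilon+\widetilde v_\varepsilon$ is a nonnegative weak solution of~\eqref{problema}, distinct from $u_\varepsilon$ because $\widetilde v_\varepsilon$ does not vanish identically. The $L^\infty$ and $C^\alpha$ regularity, as well as the strict positivity of $\widetilde u_\varepsilon$ on any open set $\Omega$ where $h\geq 0$, then follow by repeating verbatim the final portion of the proof of Proposition~\ref{prop:primasol}, which appeals to Proposition~5.1.1, Corollary~5.1.3 and Proposition~5.2.1 in~\cite{MR3617721}.

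The main technical obstacle in this plan is ensuring the strict positivity of the mountain pass level $c$, which is what makes the contradiction meaningful: this is the reason one must exploit that, under the contradiction hypothesis, the local minimum at $0$ is in fact a \emph{strict} local minimum, separated from $v_\star$ by a positive energy barrier. Once this is secured, the remainder of the argument is a direct assembly of the preceding corollaries and the regularity theory from~\cite{MR3617721}.
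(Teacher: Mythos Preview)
Your proposal is correct and follows essentially the same contradiction argument as the paper: assume $v=0$ is the only critical point of $I_\varepsilon$, use Proposition~\ref{propvminimolocale} and Corollary~\ref{lemma6.5.3} for the mountain pass geometry, Proposition~\ref{PSIepsilon} for the Palais--Smale condition, and Corollary~\ref{lemma6.5.7BIS} to place the minimax level below the compactness threshold, then invoke Lemma~\ref{NUOVLEM} and the regularity results from~\cite{MR3617721}. The only difference is that you are explicitly careful about upgrading the local minimum to a strict one so that the mountain pass level is positive, a point the paper leaves implicit.
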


\begin{proof}
In order to prove Proposition~\ref{prop:wsecsol98}, we look
at the translated functional~$I_\varepsilon$ in~\eqref{definizioneIepsilon}.
Thanks to Proposition~\ref{propvminimolocale}, we know that~$0$ is a critical
point of~$I_\varepsilon$. 

We now claim that
\begin{equation}\label{dhewugt3y4t734876lkjhgfd}
{\mbox{there exists a second critical point~$v$ of~$I_\varepsilon$ that does not vanish identically.}}
\end{equation}
To check this, we argue towards a contradiction and we suppose that~$0$
is the only critical point of~$I_\varepsilon$. In particular, we know that~$0$
is a local minimum for~$I_\varepsilon$. This and Corollary~\ref{lemma6.5.3} give that~$I_\varepsilon$ satisfies the geometric hypothesis of the Mountain
Pass Theorem. 

Moreover, we are in the position of employing Proposition~\ref{PSIepsilon}
to deduce that~$I_\varepsilon$ satisfies the Palais-Smale condition under the
level~$\frac{s}{N} S^\frac{N}{2s}$.

Also, denoting by~$c$ the mountain 
pass level of~$I_\varepsilon$, Corollary~\ref{lemma6.5.7BIS} gives that
\[
c\leq \sup_{t\geq 0}I_\varepsilon^\star(t\phi z_{\mu,\xi})
<\frac{s}{N} S^\frac{N}{2s}.
\]
Hence~$I_\varepsilon$ admits a critical point of mountain pass type,
which gives the desired contradiction. Thus, the claim in~\eqref{dhewugt3y4t734876lkjhgfd} is established.

In light of~\eqref{dhewugt3y4t734876lkjhgfd}
and Lemma~\ref{NUOVLEM}, we have that~$\widetilde u_\varepsilon:=u_\varepsilon+v$
is a nonnegative (and nontrivial) weak solution of~\eqref{problema}, as desired.

Furthermore, $\widetilde u_\varepsilon \in L^\infty(\R^N)\cap C^\alpha(\R^N)$, for any~$\alpha\in(0,\min\{2s,1\})$, thanks to
Proposition~5.1.1 and Corollary~5.1.3 in~\cite{MR3617721}.

Finally, we observe that if~$h\geq 0$ in~$\Omega$, then~$(-\Delta)^s \widetilde u_\varepsilon\ge0$ in~$\Omega$ in the weak sense, and therefore 
the strong maximum principle for continuous weak solutions in
Proposition~5.2.1 in~\cite{MR3617721} yields that the solution~$u_\varepsilon$ is strictly positive in~$\Omega$.
\end{proof}

\begin{appendix}

\section{On Proposition~3.1.1 in~\cite{MR3617721}}\label{APP}

As mentioned in the introduction, the results presented here were proved in~\cite{MR3617721}
in the parameter range~$s\in\left(0,\frac12\right]$. It was incorrectly stated in~\cite{MR3617721} that the methods presented there were able to capture the full range~$s\in(0,1)$ and the objective of this appendix is to clarify ``what goes wrong'' in~\cite{MR3617721} when~$s>\frac12$.

As a matter of fact, all the methods in~\cite{MR3617721} can cover the full fractional range~$s\in(0,1)$, with the exception of Proposition~3.1.1 there, which is only valid
when~$s\in\left(0,\frac12\right]$.

The technical issue in the proof of Proposition~3.1.1 in~\cite{MR3617721} is that it relied on Muckenhoupt weights, which is totally fine when~$s\in\left(0,\frac12\right]$,
but when~$s>\frac12$ the exponent~$q$ used in that proof becomes less than~$1$, thus going away from the Muckenhoupt class. At a first glance, this may seem a minor technical glitch, to be fixed through one of the many weighted Sobolev inequalities that are available in the literature -- and one may suspect that this is the case also because usually the parameter range below~$\frac12$ (not above!) is the more ``problematic'' one, and one may imagine that for inequalities valid for~$s=1$ and below~$s=\frac12$ the use of a smart interpolation trick could fix all technical issues.

But no, the situation is surprisingly more complicated, as shown by the next observation.

\begin{lem}
Proposition~3.1.1 in~\cite{MR3617721} is not valid
when~$s\in\left(\frac12,1\right)$.

More specifically, for all~$N\ge2$ and~$s\in\left(\frac12,1\right)$, setting~$\gamma:=1+\frac{2}{N-2s}$, there exists
a sequence of functions~$U_n\in C^\infty_c(\R^n\times(0,+\infty),\,[0,1])$ such that
$$ \lim_{n\to+\infty}\frac{\displaystyle\left(
\int_{\R^N\times(0,+\infty)}y^{1-2s} |U_n(x,y)|^{2\gamma}\,dx\,dy\right)^{\frac1{2\gamma}}}{\displaystyle\left(\int_{\R^N\times(0,+\infty)}y^{1-2s} |\nabla U_n(x,y)|^{2}\,dx\,dy\right)^{\frac12}}=+\infty.$$
\end{lem}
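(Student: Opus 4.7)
The plan is to reduce the claim to the failure of the unweighted Sobolev embedding in $\R^{N+1}$ above its critical exponent. The crucial observation is that if a test function $U\in C^\infty_c(\R^N\times(0,+\infty))$ has its support contained in a horizontal slab $\R^N\times[a,b]$ with $0<a<b<+\infty$, then the weight $y^{1-2s}$ is bounded above and below by positive constants on $\mathrm{supp}(U)$. Consequently, both integrals of interest are comparable to their unweighted Lebesgue counterparts on $\R^{N+1}$, and any such weighted Sobolev inequality would force the unweighted estimate
\[
\|U\|_{L^{2\gamma}(\R^{N+1})}\le C\,\|\nabla U\|_{L^{2}(\R^{N+1})}\qquad\text{for every }U\in C^\infty_c(\R^N\times(a,b)).
\]

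The classical Sobolev inequality in ambient dimension $N+1$ yields exactly such an estimate if and only if the exponent on the left is at most $2^\star_{N+1}:=\tfrac{2(N+1)}{N-1}$. A short computation gives
\[
2\gamma-2^\star_{N+1}=\frac{2(N+2-2s)}{N-2s}-\frac{2(N+1)}{N-1}=\frac{4(2s-1)}{(N-2s)(N-1)},
\]
which is strictly positive for $s\in(\tfrac12,1)$ and $N\geq 2$. Hence we lie strictly above the Sobolev threshold, and a standard concentration argument will contradict the inequality.

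To make this concrete, I would fix once and for all a nonnegative $V\in C^\infty_c(\R^{N+1},[0,1])$ with $V\not\equiv 0$ and $\mathrm{supp}(V)\subset B_{1/2}(0)\times(-\tfrac14,\tfrac14)$, and for $n\in\N$ set
\[
U_n(x,y):=V\bigl(nx,\,n(y-\tfrac{3}{2})\bigr).
\]
Then $U_n\in C^\infty_c(\R^N\times(0,+\infty),[0,1])$, since $\mathrm{supp}(U_n)\subset B_{1/(2n)}(0)\times(\tfrac{3}{2}-\tfrac{1}{4n},\tfrac{3}{2}+\tfrac{1}{4n})\subset\R^N\times(1,2)$ for every $n\geq 1$. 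The change of variables $\xi=nx$, $\eta=n(y-\tfrac{3}{2})$ produces
\[
\int_{\R^{N+1}}|\nabla U_n|^{2}\,dx\,dy=n^{1-N}\|\nabla V\|_{L^{2}(\R^{N+1})}^{2},\qquad \int_{\R^{N+1}}|U_n|^{2\gamma}\,dx\,dy=n^{-(N+1)}\|V\|_{L^{2\gamma}(\R^{N+1})}^{2\gamma},
\]
so the unweighted ratio equals $c_V\,n^{(N-1)/2-(N+1)/(2\gamma)}$, and this exponent is strictly positive precisely when $2\gamma>2^\star_{N+1}$, i.e., when $s>\tfrac{1}{2}$. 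The two-sided estimate $\min(1,2^{1-2s})\le y^{1-2s}\le\max(1,2^{1-2s})$ on $\mathrm{supp}(U_n)\subset\R^N\times(1,2)$ transfers the same divergence to the weighted ratio, yielding the claim.

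No step in this scheme presents a genuine analytic obstacle; the only calculation of substance is the identification $2\gamma>2^\star_{N+1}\iff s>\tfrac{1}{2}$, which clarifies why this pathology is invisible in the range $s\in(0,\tfrac{1}{2}]$ treated in~\cite{MR3617721} and emerges abruptly once $s$ crosses $\tfrac{1}{2}$. The failure is therefore not a technical glitch of the Muckenhoupt machinery but a dimensional mismatch: the exponent $2\gamma$, adapted to the fractional Sobolev embedding on $\R^N$, overshoots the classical Sobolev threshold in the one-dimension-higher ambient space $\R^{N+1}$ exactly when $s>\tfrac{1}{2}$.
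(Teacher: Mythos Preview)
Your argument is correct, and it takes a genuinely different route from the paper's. The paper's proof fixes a bump $\Phi\in C^\infty_c(B_1)$ and translates it in the $y$-direction, setting $U(x,y)=\Phi(x,y-R)$ and sending $R\to+\infty$; the point there is that the weight $y^{1-2s}$ decays like $R^{1-2s}$ on the support, but after taking the $\tfrac{1}{2\gamma}$-th root the numerator decays more slowly than the denominator does after taking the $\tfrac12$-th root, producing growth of order $R^{\frac{1-2s}{2}(\frac{1}{\gamma}-1)}$. Your construction instead freezes the support in a fixed slab $\R^N\times(1,2)$, where the weight is harmless, and concentrates at a point; the divergence then comes from the purely Euclidean fact that $2\gamma>2^\star_{N+1}$ exactly when $s>\tfrac12$, which you compute cleanly. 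Your approach is more conceptual: it identifies the failure as a supercritical Sobolev exponent in the $(N{+}1)$-dimensional ambient space, making transparent why the threshold is $s=\tfrac12$ and showing that the obstruction is local (it persists even where the weight is bounded above and below), hence genuinely not a Muckenhoupt issue. The paper's translation argument is shorter and avoids the exponent bookkeeping, but is less explanatory about the mechanism.
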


\begin{proof} We use the short notation~$\R^{N+1}_+:=\R^N\times(0,+\infty)$
and denote points by~$X=(x,y)\in\R^n\times(0,+\infty)$.

Let~$R>1$ and~$\Phi\in C^\infty_c(B_1,[0,1])$ with~$\Phi=1$ in~$B_{1/2}$.
Let also~$U(X)=U(x,y):=\Phi(x, y-R)$.

Then, if~$s\in\left(\frac12,1\right)$,
\begin{eqnarray*}&&
\int_{\R^{n+1}_+}y^{1-2s} |U(X)|^{2\gamma}\,dX\ge
\int_{B_{1/2}(Re_{n+1})}y^{1-2s} |\Phi(x, y-R)|^{2\gamma}\,dX\\&&\qquad=\int_{B_{1/2}(Re_{n+1})}y^{1-2s}\,dX\ge
|B_{1/2}|\,\left(R+\frac12\right)^{1-2s}
\end{eqnarray*}
and
\begin{eqnarray*}&&\int_{\R^{n+1}_+}y^{1-2s} |\nabla U(X)|^{2}\,dX
=\int_{B_1(Re_{n+1})}y^{1-2s} |\nabla \Phi(x,y-R)|^{2}\,dX\\&&\qquad
\le \left(R-\frac12\right)^{1-2s}\int_{B_1}|\nabla \Phi(X)|^{2}\,dX.
\end{eqnarray*}
As a result,
\begin{eqnarray*}&&\frac{\displaystyle\left(
\int_{\R^{n+1}_+}y^{1-2s} |U(X)|^{2\gamma}\,dX\right)^{\frac1{2\gamma}}}{\displaystyle\left(\int_{\R^{n+1}_+}y^{1-2s} |\nabla U(X)|^{2}\,dX\right)^{\frac12}}\ge c\,\frac{\displaystyle
\left(R+\frac12\right)^{\frac{1-2s}{2\gamma}}}{\displaystyle\left(R-\frac12\right)^{\frac{1-2s}2}}\ge cR^{\frac{1-2s}2\left(\frac1\gamma-1\right)}
\end{eqnarray*}
for some~$c>0$ possibly varying from step to step. 

Hence, since~$\gamma>1$, and thus~$\frac{1-2s}2\left(\frac1\gamma-1\right)>0$,
we conclude that$$ \lim_{R\to+\infty}\frac{\displaystyle\left(
\int_{\R^{n+1}_+}y^{1-2s} |U(X)|^{2\gamma}\,dX\right)^{\frac1{2\gamma}}}{\displaystyle\left(\int_{\R^{n+1}_+}y^{1-2s} |\nabla U(X)|^{2}\,dX\right)^{\frac12}}=+\infty,$$
as claimed.
\end{proof}

\end{appendix}

\begin{bibdiv}
\begin{biblist}

\bib{MR1801341}{article}{
   author={Ambrosetti, A.},
   author={Azorero, J. Garcia},
   author={Peral, I.},
   title={Elliptic variational problems in ${\bf R}^N$ with critical growth},
   note={Special issue in celebration of Jack K. Hale's 70th birthday, Part
   1 (Atlanta, GA/Lisbon, 1998)},
   journal={J. Differential Equations},
   volume={168},
   date={2000},
   number={1},
   pages={10--32},
   issn={0022-0396},
   review={\MR{1801341}},
   doi={10.1006/jdeq.2000.3875},
}

\bib{MR370183}{article}{
   author={Ambrosetti, Antonio},
   author={Rabinowitz, Paul H.},
   title={Dual variational methods in critical point theory and
   applications},
   journal={J. Functional Analysis},
   volume={14},
   date={1973},
   pages={349--381},
   issn={0022-1236},
   review={\MR{370183}},
   doi={10.1016/0022-1236(73)90051-7},
}

\bib{MR3866572}{article}{
   author={Bonder, Juli\'{a}n Fern\'{a}ndez},
   author={Saintier, Nicolas},
   author={Silva, Anal\'{\i}a},
   title={The concentration-compactness principle for fractional order
   Sobolev spaces in unbounded domains and applications to the generalized
   fractional Brezis-Nirenberg problem},
   journal={NoDEA Nonlinear Differential Equations Appl.},
   volume={25},
   date={2018},
   number={6},
   pages={Paper No. 52, 25},
   issn={1021-9722},
   review={\MR{3866572}},
   doi={10.1007/s00030-018-0543-5},
}

\bib{MR2759829}{book}{
   author={Brezis, Haim},
   title={Functional analysis, Sobolev spaces and partial differential
   equations},
   series={Universitext},
   publisher={Springer, New York},
   date={2011},
   pages={xiv+599},
   isbn={978-0-387-70913-0},
   review={\MR{2759829}},
}

\bib{MR2944369}{article}{
   author={Di Nezza, Eleonora},
   author={Palatucci, Giampiero},
   author={Valdinoci, Enrico},
   title={Hitchhiker's guide to the fractional Sobolev spaces},
   journal={Bull. Sci. Math.},
   volume={136},
   date={2012},
   number={5},
   pages={521--573},
   issn={0007-4497},
   review={\MR{2944369}},
   doi={10.1016/j.bulsci.2011.12.004},
}

\bib{MR3617721}{book}{
   author={Dipierro, Serena},
   author={Medina, Mar\'{\i}a},
   author={Valdinoci, Enrico},
   title={Fractional elliptic problems with critical growth in the whole of
   $\Bbb{R}^n$},
   series={Appunti. Scuola Normale Superiore di Pisa (Nuova Serie) [Lecture
   Notes. Scuola Normale Superiore di Pisa (New Series)]},
   volume={15},
   publisher={Edizioni della Normale, Pisa},
   date={2017},
   pages={viii+152},
   isbn={978-88-7642-600-1},
   isbn={978-88-7642-601-8},
   review={\MR{3617721}},
   doi={10.1007/978-88-7642-601-8},
}

\bib{MR346619}{article}{
   author={Ekeland, I.},
   title={On the variational principle},
   journal={J. Math. Anal. Appl.},
   volume={47},
   date={1974},
   pages={324--353},
   issn={0022-247X},
   review={\MR{346619}},
   doi={10.1016/0022-247X(74)90025-0},
}

\bib{MR1030853}{article}{
   author={Ghoussoub, N.},
   author={Preiss, D.},
   title={A general mountain pass principle for locating and classifying
   critical points},
   language={English, with French summary},
   journal={Ann. Inst. H. Poincar\'{e} C Anal. Non Lin\'{e}aire},
   volume={6},
   date={1989},
   number={5},
   pages={321--330},
   issn={0294-1449},
   review={\MR{1030853}},
}

\bib{MR653747}{article}{
   author={Lions, Pierre-Louis},
   title={Principe de concentration-compacit\'{e} en calcul des variations},
   language={French, with English summary},
   journal={C. R. Acad. Sci. Paris S\'{e}r. I Math.},
   volume={294},
   date={1982},
   number={7},
   pages={261--264},
   issn={0249-6291},
   review={\MR{653747}},
}

\bib{MR3271254}{article}{
   author={Servadei, Raffaella},
   author={Valdinoci, Enrico},
   title={The Brezis-Nirenberg result for the fractional Laplacian},
   journal={Trans. Amer. Math. Soc.},
   volume={367},
   date={2015},
   number={1},
   pages={67--102},
   issn={0002-9947},
   review={\MR{3271254}},
   doi={10.1090/S0002-9947-2014-05884-4},
}

\end{biblist}
\end{bibdiv}

\vfill
\end{document}